\newmdenv[backgroundcolor=gray!20,innertopmargin=0pt,innerbottommargin=0pt,innerleftmargin=0pt,innerrightmargin=0pt,linewidth=0pt]{greybox}
\newcommand{\inner}[1]{\left\langle #1 \right\rangle}
\newcommand{\norm}[1]{\left\Vert #1\right\Vert}
\newcommand{\bb}[1]{\mathbb{#1}}
\newcommand{\conv}[0]{\mathrm{conv}}
\newcommand{\X}{{ \ca{X} }}
\newcommand{\Y}{{ \ca{Y} }}
\newcommand{\ca}[1]{\mathcal{#1}}
\newcommand{\M}[0]{\mathcal{M}}
\newcommand{\prox}{{\mathrm{prox}}}
\newcommand{\NX}{\ca{N}_{\X}}
\newcommand{\vk}{{v_{k} }}
\newcommand{\vkp}{{v_{k+1} }}
\newcommand{\xk}{{x_{k} }}
\newcommand{\yk}{{y_{k} }}
\newcommand{\xkp}{{x_{k+1} }}
\newcommand{\ykp}{{y_{k+1} }}
\newcommand{\zk}{{z_{k} }}
\newcommand{\wk}{{w_{k} }}
\newcommand{\wkp}{{w_{k+1} }}
\newcommand{\D}{\ca{D}}
\newcommand{\grad}{{\mathit{grad}\,}}
\newcommand{\Rn}{\mathbb{R}^n}
\newcommand{\Rnp}{\mathbb{R}^{n\times p}}
\newcommand{\Rp}{\mathbb{R}^p}
\newcommand{\Rs}{\mathbb{R}^s}
\newcommand{\Rq}{\mathbb{R}^q}
\newcommand{\Tregu}{\ca{T}_{regu}}
\newcommand{\y}{{y}}
\newcommand{\pprox}{\widetilde{\prox}}
\newtheorem{theo}{Theorem}[section]
\newtheorem{lem}[theo]{Lemma}
\newtheorem{prop}[theo]{Proposition}
\newtheorem{cond}[theo]{Condition}
\newtheorem{coro}[theo]{Corollary}
\newtheorem{defin}[theo]{Definition}
\newtheorem{rmk}[theo]{Remark}
\newtheorem{assumpt}[theo]{Assumption}
\DeclareMathOperator*{\argmin}{arg\,min}
\numberwithin{equation}{section}
\title{Developing Lagrangian-based Methods for Nonsmooth Nonconvex Optimization}
\author{Nachuan Xiao\thanks{Institute of Operational Research and Analytics, National University of Singapore, Singapore. (xnc@lsec.cc.ac.cn).}, ~
	Kuangyu Ding\thanks{Department of Mathematics, National University of Singapore, Singapore. (kuangyud@u.nus.edu).},  ~
	Xiaoyin Hu\thanks{School of Computer and Computing Science, Hangzhou City University, Hangzhou, 310015, China.  (hxy@amss.ac.cn).},  ~
	Kim-Chuan Toh\thanks{Department of Mathematics, and Institute of Operations Research and Analytics, National University of Singapore, Singapore 119076. (mattohkc@nus.edu.sg).}}
\begin{document}
	\maketitle
	
	\begin{abstract}
		In this paper, we consider the minimization of a nonsmooth nonconvex objective function $f(x)$ over a closed convex subset $\mathcal{X}$ of $\mathbb{R}^n$, with additional nonsmooth nonconvex constraints $c(x) = 0$. We develop a unified framework for developing Lagrangian-based methods, which takes a single-step update to the primal variables by some subgradient methods in each iteration. These subgradient methods are ``embedded'' into our framework, in the sense that they are incorporated as black-box updates to the primal variables. We prove that our proposed framework inherits the global convergence guarantees from these embedded subgradient methods under mild conditions. In addition, we show that our framework can be extended to solve constrained optimization problems with expectation constraints.  Based on the proposed framework, we show that a wide range of existing stochastic subgradient methods, including the proximal SGD, proximal momentum SGD, and proximal ADAM, can be embedded into Lagrangian-based methods. Preliminary numerical experiments on deep learning tasks illustrate that our proposed framework yields efficient variants of Lagrangian-based methods with convergence guarantees for nonconvex nonsmooth constrained optimization problems.

	\end{abstract}

\section{Introduction}
In this paper, we consider the following nonsmooth constrained optimization problem (NCP),
	\begin{equation}
		\label{Prob_Ori}
		\tag{NCP}
		\begin{aligned}
			\min_{x \in \X}\quad & f(x),\\
			\text{s. t.} \quad &c(x) = 0, 
		\end{aligned}
	\end{equation}
	where $\X$ is a closed and convex subset of $\Rn$. Moreover, the objective function $f: \Rn \to \bb{R}$ and the constraint mapping $c: \Rn \to \Rp$ (both possibly nonconvex and nonsmooth) are assumed to be locally Lipschitz continuous over $\Rn$. Throughout this paper, we are particularly interested in the scenarios where the objective function $f$ and constraints $c$ take expectation formulations, implying that only noisy evaluations of function values and subdifferentials of $f$ and $c$ are available in solving \eqref{Prob_Ori}.

	The Lagrangian-based approach is an important class of penalty methods \cite{xie2021complexity} for solving the constrained optimization problem \eqref{Prob_Ori}. Tracing back to the classical works of Hestenes \cite{hestenes1969multiplier}  and Powell \cite{powell1969method}, Lagrangian-based methods are commonly employed to solve large-scale constrained optimization problems. Let the augmented Lagrangian penalty function $L_{\rho}(x, \lambda)$ be defined as 
	\begin{equation*}
		L_{\rho} (x, \lambda) := f(x) + \inner{\lambda, c(x)} + \frac{\rho}{2} \norm{c(x)}^2, 
	\end{equation*}
	then as shown in \cite{xie2021complexity}, a popular framework of Lagrangian-based methods can be expressed as, 
	\begin{equation}
		\label{Eq_Intro_vanilla_ALM}
		\tag{ALM}
		\begin{aligned}
			& \xkp \approx \mathop{\arg\min}_{x \in \X} L_{\rho}(x, \lambda_k),  && \quad\text{(primal subproblem)}\\
			& \lambda_{k+1} = \lambda_k + \theta_kc(\xkp).  && \quad \text{(dual ascent)}
		\end{aligned}
	\end{equation}
	The existing works on the convergence of Lagrangian-based methods mainly focus on the cases where $f$ is convex. For the settings where both $f$ and $c$ are nonconvex and nonsmooth, as demonstrated in \cite{cohen2022dynamic}, the understanding of the convergence behavior of these methods remains limited. When we further assume the differentiability of $f$ and $c$ in \eqref{Prob_Ori}, some existing works \cite{andreani2008augmented,birgin2014practical,birgin2018augmented,curtis2015adaptive,sabach2019lagrangian,sahin2019inexact,sabach2022faster,alacaoglu2023complexity,papadimitriou2023stochastic,tang2023self,wang2023strong,tang2024solving} have investigated the convergence of Lagrangian-based methods as in \eqref{Eq_Intro_vanilla_ALM}, where the primal subproblems are required to be solved with controlled accuracy.

    Very recently, some existing works focus on developing linearized Lagrangian-based methods  \cite{bolte2018nonconvex,cohen2022dynamic,zhu2023first,bourkhissi2023complexity,greenstein2023augmented,hallak2023adaptive} for solving \eqref{Prob_Ori}.  These methods update the primal variables $\{\xk\}$ by applying a single proximal gradient step to $L_{\rho}(\cdot, \lambda_k)$, while enjoying global convergence under appropriate conditions.  As their analysis relies on the weak convexity of the objective function $f$ and the constraints $c$ in \eqref{Prob_Ori}, their results cannot be extended to the cases where both $f$ and $c$ are nonconvex nonsmooth, particularly in the absence of Clarke regularity.  To the best of our knowledge,  no existing work has investigated linearized Lagrangian-based methods for solving \eqref{Prob_Ori} with non-Clarke-regular objectives and constraints.

	\paragraph{Existing works on nonsmooth optimization}
	In nonsmooth optimization, it has been discussed in \cite{daniilidis2020pathological} that general Lipschitz continuous functions can manifest highly pathological properties, and subgradient methods may fail to find any critical point for $f$. Consequently, the majority of existing works restrict their analysis to the case where the objective function $f$ admits a conservative subdifferential (also called conservative field), in the sense that there exists a locally bounded graph-closed set-valued mapping $\D_f$ such that for any absolutely continuous mapping $\gamma: [0, +\infty) \to \Rn$, it holds that 
	\begin{equation*}
		f(\gamma(1)) - f(\gamma(0)) = \int_{0}^{1} \sup_{l_s \in \D_f(\gamma(s))} \inner{l_s, \dot{\gamma}(s)} \mathrm{d}s.
	\end{equation*}
	The extension of conservative field to mappings over $\Rn$ can be found in \cite[Definition 4]{bolte2021conservative}, where such a set-valued mapping is referred to as the {\it conservative Jacobian}. With the concept of conservative field and conservative Jacobian, the class of mappings that admit conservative Jacobians is termed {\it path-differentiable} mappings \cite{bolte2021conservative}.  
	As discussed in \cite{davis2020stochastic,bolte2021conservative}, the class of path-differentiable mappings encompasses all functions whose graphs are definable in an $o$-minimal structure (i.e., definable functions). Hence path-differentiable mappings are general enough to cover a wide range of objective functions and constraints in real-world applications, especially those related to neural networks training tasks \cite{davis2020stochastic,bolte2021conservative,castera2021inertial}.   More importantly, \cite{bolte2020mathematical,bolte2021conservative} show that when a mapping is formulated as the composition of definable functions, the outputs of an automatic differentiation algorithm (AD) for differentiating such a mapping are enclosed in its corresponding conservative Jacobian. Therefore, the concepts of conservative field and conservative Jacobian are capable of characterizing the outputs of AD algorithms, which are implemented in training nonsmooth neural networks in practice.  
	
	Towards the convergence of stochastic subgradient methods in minimizing nonsmooth path-differentiable function $h$ over $\X$, i.e., 
	\begin{equation}
		\label{Prob_simple_min_X}
		\min_{x \in \X} ~ h(x), 
	\end{equation}
	the ordinary differential equation (ODE) approach \cite{benaim2005stochastic,borkar2009stochastic,duchi2018stochastic,davis2020stochastic} is a powerful tool for analyzing these stochastic subgradient methods. In the ODE approach, the convergence of the iterates of a stochastic subgradient method follows the convergence of the trajectories of its corresponding noiseless continuous-time differential inclusion. Then based on the ODE approaches, various existing works \cite{davis2020stochastic,ruszczynski2020convergence,bianchi2021closed,bolte2021conservative,castera2021inertial,hu2022improved,le2023nonsmooth}  establish the convergence properties of the stochastic subgradient method for solving \eqref{Prob_simple_min_X} with $\X = \Rn$. For the scenarios with a general convex set $\X$, existing works \cite{davis2020stochastic,ruszczynski2020convergence} mainly focus on the proximal extensions of stochastic subgradient descent (SGD) \cite{davis2020stochastic}, and stochastic subgradient descent with momentum (SGDM) \cite{ruszczynski2020convergence}. However, to our best knowledge, there is no existing work discussing how to develop stochastic Lagrangian-based methods for solving \eqref{Prob_Ori} with the additional nonsmooth nonconvex constraint $c(x)=0$.

	\paragraph{Embedding stochastic subgradient methods into Lagrangian-based methods}
	In the presence of existing rich results on the convergence properties of stochastic subgradient methods for solving \eqref{Prob_simple_min_X}, it is of great importance to investigate how to \textit{embed} these methods into Lagrangian-based methods for solving \eqref{Prob_Ori}. Here, we use the word ``embed'' to emphasize that these stochastic subgradient methods are employed as black-box methods for updating the primal variables $\{\xk\}$, and the global convergence of these linearized Lagrangian-based methods could automatically inherit from the existing results on the global convergence of these embedded stochastic subgradient methods in solving \eqref{Prob_simple_min_X}. Specifically, we focus on a class of stochastic subgradient methods for solving \eqref{Prob_simple_min_X} that take the following update scheme, 
	\begin{equation}
		\label{Eq_Intro_subgradient_method}
		(\xkp, \ykp) \in \Phi_k(g_k, \xk, \yk, \eta_k).
	\end{equation}
	Here $\yk \in \Rq$ refers to the auxiliary variables employed in the selected stochastic subgradient method, such as the heavy-ball momentum \cite{polyak1964some}, the second-order moment estimator \cite{kingma2014adam}, etc. Moreover, $g_k \in \Rn$ represents the stochastic subgradients of $h$ at $\xk$, while $\eta_k > 0$ refers to the stepsizes. Furthermore, the set-valued mapping $\Phi_k: \Rn \times \X \times \Rq \times \bb{R} \rightrightarrows \X \times \Rq$ characterizes the update scheme of a specific stochastic subgradient method. Readers could refer to Section 4 for detailed illustrations of the stochastic subgradient methods that take the form of \eqref{Eq_Intro_subgradient_method}.

	To embed stochastic subgradient methods into Lagrangian-based methods, some existing works \cite{bolte2018nonconvex,li2023stochastic} choose to apply these stochastic subgradient methods to solve the primal subproblems in \eqref{Eq_Intro_vanilla_ALM} to achieve a solution of controlled accuracy.  For example, the framework developed in \cite{bolte2018nonconvex} requires sufficient reductions on the function value of $L_{\rho}(\cdot, \lambda_k)$ in each iteration \cite[Definition 3.1]{bolte2018nonconvex}. However, for nonsmooth nonconvex $f$ and $c$, it is usually challenging to verify the optimality of the yielded solutions. Consequently, these existing works require the weak convexity of both $f$ and $c$, and thus are incapable of dealing with non-Clarke-regular scenarios.  On the other hand, the existing linearized Lagrangian-based methods \cite{cohen2022dynamic,zhu2023first,bourkhissi2023complexity,hallak2023adaptive} only choose to take a proximal gradient descent step to update the primal variables $\{\xk\}$. Given these efficient stochastic subgradient methods that are designed to solve \eqref{Prob_simple_min_X}, how to embed these methods into Lagrangian-based methods for solving \eqref{Prob_Ori} remains unexplored. Therefore, we are driven to study the following question:
	\begin{quote}
            \it 
		Can we embed stochastic subgradient methods into Lagrangian-based methods for solving \eqref{Prob_Ori} with guaranteed convergence, especially for nonsmooth path-differentiable objective functions $f$ and constraints $c$?
	\end{quote}

	\paragraph{Motivations}
	
	For a class of nonconvex equality-constrained optimization problems, the pioneering works by \cite{bolte2018nonconvex,cohen2022dynamic} have introduced frameworks for developing linearized Lagrangian-based methods. In each iteration, these frameworks typically perform a proximal gradient descent step applied to the primal subproblem in \eqref{Eq_Intro_vanilla_ALM}, followed by a dual ascent step for updating the dual variables ${\lambda_k}$. Under the additional assumption of differentiability for both $f$ and $c$, a straightforward application of these frameworks to \eqref{Prob_Ori} yields the following update scheme:
	\begin{equation}
		\label{Eq_Intro_PLALM}
		\left\{
		\begin{aligned}
			& g_k = \nabla f(\xk)+ \xi_{f,k+1},\\
			&J_{c,k}  = [\nabla c_1(\xk), \nabla c_2(\xk),..., \nabla c_p(\xk)],\\
			& l_{x,k} = g_k + J_{c,k} (\lambda_k + \rho  c(\xk)),\\
			& \xkp = \prox_{\X}(\xk - \eta_k l_{x,k}),\\
			& \lambda_{k+1} = \lambda_k + \theta_kc(\xkp). 
		\end{aligned}
		\right.
	\end{equation}
	Here $\xi_{f, k+1}$ characterizes the noises in the evaluation of $\nabla f(\xk)$, and thus $g_k$ refers to the stochastic gradient of $f$ at $\xk$. Moreover, $J_{c,k}$ refers to the Jacobian of the constraint mapping $c$ at $\xk$. Therefore, $l_{x,k}$ is a noisy evaluation of $\nabla_x L_{\rho}(\xk, \lambda_{k})$. However, when applied to nonsmooth path-differentiable objectives and constraints, the convergence properties of \eqref{Eq_Intro_PLALM} remain unexplored.

	Furthermore, it is worth mentioning that the update scheme in \eqref{Eq_Intro_PLALM} can be regarded as a stochastic gradient descent-ascent method for the following nonconvex-concave min-max optimization problem, 
	\begin{equation}
		\label{Eq_Intro_minmax}
		\min_{x \in \Rn} \max_{\lambda \in \Rp} ~ L_{\rho}(x, \lambda),
	\end{equation}
	in the sense that \eqref{Eq_Intro_PLALM} is equivalent to the following update scheme,
	\begin{equation}
		\xkp = \prox_{\X}(\xk - \eta_k \nabla_x L_{\rho}(\xk, \lambda_k)), \quad \lambda_{k+1} = \lambda_k + \theta_k \nabla_{\lambda}  L_{\rho}(\xkp, \lambda_k).
	\end{equation}
	Consequently, some established results \cite{xu2023unified,cohen2024alternating,yang2024data} on nonconvex-concave min-max problems could potentially be applied to solve \eqref{Prob_Ori} through the min-max optimization problem \eqref{Eq_Intro_minmax}. However, these existing works rely on the uniform boundedness of $L_{\rho}$ over the region $\X \times \Rn$, which usually cannot be satisfied in \eqref{Prob_Ori} as $L_{\rho}(x, \lambda)$ is linear with respect to $\lambda$ for any fixed $x$. More importantly, the proofs in most existing works rely on the weak convexity of $f$ and $c$. Therefore, these existing results cannot be extended to solve \eqref{Prob_Ori} when $f$ and $c$ are only assumed to be path-differentiable and may lack Clarke regularity. 
	
	To overcome the aforementioned difficulties, we introduce the following modified augmented Lagrangian penalty function, which incorporates an additional term $-\frac{1}{2\beta} \norm{c(x)}\norm{\lambda}^2$ into $L_{\rho}(x, \lambda)$, 
	\begin{equation}
		\label{Eq_Intro_modified_ALM_function}
		H_{\rho, \beta}(x, \lambda) := f(x) + \inner{\lambda, c(x)} + \frac{\rho}{2} \norm{c(x)}^2 - \frac{1}{2\beta} \norm{c(x)}\norm{\lambda}^2.
	\end{equation}
	Given that $H_{\rho, \beta}(x, \lambda)$ is strongly concave with respect to the dual variables $\lambda$ for any $x \in \X$ satisfying $c(x) \neq 0$, by taking a single gradient ascent step at $(\xkp, \lambda_k)$ with stepsize $s_k$, we arrive at the following update scheme for the dual variables $\lambda_k$, 
	\begin{equation}
		\label{Eq_Intro_dual_ascent_vanilla}
		\lambda_{k+1} = \lambda_k + s_k \nabla_\lambda H_{\rho, \beta}(\xkp, \lambda_k)= \lambda_k + s_k \left( c(\xkp) - \frac{\norm{c(\xkp)}}{\beta}\lambda_k  \right). 
	\end{equation}
	Moreover, as $H_{\rho,\beta}$ is $\frac{\norm{c(\xkp)}}{\beta}$-strongly concave with respect to $\lambda$ at $(\xkp, \lambda_k)$,  we can choose the stepsize in \eqref{Eq_Intro_dual_ascent_vanilla} as $s_k = \frac{\theta_k}{\norm{c(\xkp)}}$ for some positive parameters $\{\theta_k\}$.  Therefore, the update scheme \eqref{Eq_Intro_dual_ascent_vanilla} can be reshaped as 
	\begin{equation}
		\label{Eq_Intro_dual_ascent}
		\lambda_{k+1} = \lambda_k + \theta_k \left( \mathrm{regu}(c(\xkp)) - \frac{1}{\beta}\lambda_k  \right), \quad \text{where}\quad 
		\mathrm{regu}(y) := 
		\begin{cases}
			0, & y = 0,\\
			\frac{y}{\norm{y}}, & y \neq 0. 
		\end{cases}
	\end{equation}
	Similar choices of the stepsizes for updating the dual variables $\lambda_k$ in Lagrangian-based methods can be found at \cite{li2021rate,li2023stochastic}, where the dual variables $\{\lambda_k\}$ are updated by 
	\begin{equation}
		\label{Eq_Intro_dual_ascent_iALM}
		\lambda_{k+1} = \lambda_k + \min\left\{ \frac{\theta_k}{\norm{c(\xkp)}}, \tilde{\beta} \sigma^k   \right\} \cdot c(\xkp),  
	\end{equation}
	with a predetermined constants $\sigma > 1$ and $\tilde{\beta} > 0$.

	\paragraph{Contributions}
	The contributions of our paper are summarized as follows.
	\begin{itemize}
		\item {\bf  A framework for embedding stochastic subgradient methods}\\
		In this paper, we introduce a framework for embedding stochastic subgradient methods \eqref{Eq_Intro_subgradient_method} into Lagrangian-based methods as follows:
		\begin{equation}
			\label{Eq_Framework}
			\tag{ELM}
			\left\{
			\begin{aligned}
				& l_{x,k} = d_k  + J_{c,k} (\lambda_k +  \rho \wk) + \xi_{k+1},\\
				& (\xkp, \ykp) \in \Phi_k(l_{x,k}, \xk, \yk, \eta_k),\\
				& \text{Compute $\wkp$ as an approximated evaluation to $c(\xkp)$},\\
				& \lambda_{k+1} = \lambda_k + \theta_k \left( \mathrm{regu}(\wkp) - \frac{1}{\beta}\lambda_k  \right).  
			\end{aligned}
			\right.
		\end{equation}
		Let $\D_f$ and $\D_c$ be the conservative Jacobians for $f$ and $c$, respectively. In the above, $d_k \in \Rn$ and $J_{c, k} \in \Rnp$ are inexact noiseless evaluations of $\D_f(\xk)$ and $\D_c(\xk)$ respectively. As a result, $l_{x,k} \in \Rn$ can be regarded as a noisy evaluation for the subgradient of $L_{\rho}(\cdot, \lambda_k)$ at $\xk$, where its evaluation noise is characterized by $\xi_{k+1}$. Moreover, $\beta > 0$ and $\rho \geq 0$ are two parameters for the framework. Different from existing Lagrangian-based methods, in each iteration, \eqref{Eq_Framework} updates its primal variables $\{(\xk, \yk)\}$ by using a single step of the subgradient method in \eqref{Eq_Intro_subgradient_method}, while the dual variables $\{\lambda_k\}$ are updated by a dual-ascent step for the modified augmented Lagrangian penalty function in \eqref{Eq_Intro_modified_ALM_function}.

		\item {\bf Global convergence of \eqref{Eq_Framework}}

		Under mild assumptions, we prove that $l_{x,k}$ asymptotically approximates a noisy evaluation of $\D_g(\xk)$, where $\D_g: \Rn \rightrightarrows \Rn$ is a conservative field for the following penalty function:
		\begin{equation}
			\label{Eq_Intro_penalty_function}
			g(x) := f(x) + \beta \norm{c(x)} + \frac{\rho}{2} \norm{c(x)}^2. 
		\end{equation}
		Then based on the convergence properties of the embedded stochastic subgradient method, we prove that any cluster points of the sequence $\{\xk\}$ are contained in the set $\{x \in \X: 0 \in \D_g(x) + \NX(x)\}$ under mild conditions. Moreover, we prove that $g$ is an exact penalty function of \eqref{Prob_Ori} under certain regularity conditions. Hence any cluster point of $\{\xk\}$ is a KKT point of \eqref{Prob_Ori} in the sense of the conservative field.  
		Furthermore,  we show that our framework \eqref{Eq_Framework} can be applied to design Lagrangian-based methods for nonsmooth optimization problems with nonsmooth expectation constraints (i.e., the constraint mapping $c$ in \eqref{Prob_Ori} takes a stochastic formulation).

		\item {\bf Algorithmic consequence}
		
		We revisit the convergence of proximal SGD and momentum SGD, and also establish the convergence of proximal ADAM on solving \eqref{Prob_simple_min_X} based on ODE approaches. Then we show that a wide range of stochastic subgradient methods, including the proximal SGD, proximal momentum SGD, and proximal ADAM, can be embedded into Lagrangian-based methods through our proposed framework \eqref{Eq_Framework}. Preliminary numerical experiments demonstrate that these methods exhibit comparable performance as existing Lagrangian-based methods when applied to training deep neural networks with constraints. These numerical results further demonstrate that our proposed framework \eqref{Eq_Framework} enables the direct implementation of various efficient proximal subgradient methods for \eqref{Prob_Ori}, while benefiting from their high efficiency with guaranteed global convergence.

	\end{itemize}
	
	In summary, our proposed framework \eqref{Eq_Framework} provides a shortcut for directly applying existing stochastic subgradient methods, which are designed to solve \eqref{Prob_simple_min_X}, into the minimization of the constrained optimization problem \eqref{Prob_Ori}. More importantly, the convergence properties of the framework \eqref{Eq_Framework} directly follow from the rich existing results on the employed stochastic subgradient methods under mild conditions. We present a brief illustration of our results in Figure \ref{Figure_embed}. 
	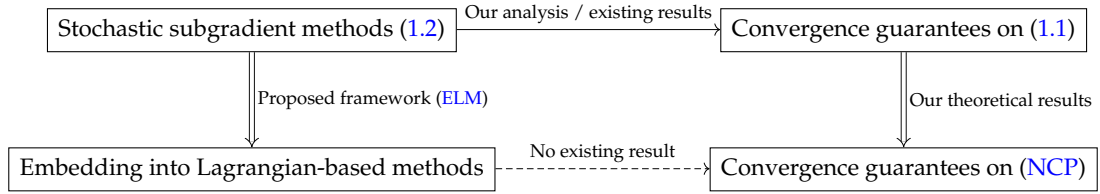
\begin{figure}[!htbp]
            \small
		\caption{A brief illustration of our results on embedding stochastic subgradient methods, which are designed to solve \eqref{Prob_simple_min_X}, into linearized Lagrangian-based methods through the framework \eqref{Eq_Framework}.}
		\label{Figure_embed}
		\begin{equation*}
			\begin{tikzcd}[row sep=4em,column sep=9em,cells={nodes={draw=black}}]
				\text{Stochastic subgradient methods \eqref{Eq_Intro_subgradient_method}} \arrow[d, Rightarrow, "\text{Proposed framework \eqref{Eq_Framework}}"]  \arrow[r, rightarrow, "\text{Our analysis / existing results}"] & \text{Convergence guarantees on \eqref{Prob_simple_min_X}} \arrow[d, Rightarrow, "\text{Our theoretical results}"]\\
				\text{Embedding into Lagrangian-based methods}  \arrow[r, dashrightarrow, "\text{No existing result}"] & \text{Convergence guarantees on \eqref{Prob_Ori}}
			\end{tikzcd}
		\end{equation*}
	\end{figure}

\paragraph{Organization}
The rest of this paper is organized as follows. In Section 2, we provide an overview of the notations used throughout the paper and present the necessary preliminary concepts related to nonsmooth analysis and differential inclusion. In Section 3, we present the convergence analysis of the proposed framework \eqref{Eq_Framework}.  Section 4  presents illustrative examples of how to embed stochastic
subgradient methods into Lagrangian-based methods using the framework \eqref{Eq_Framework}. In Section 5, we present the results of our numerical experiments that investigate the performance of our proposed Lagrangian-based methods for solving nonsmooth constrained optimization problems. Finally, we conclude the paper in the last section.

	\section{Preliminaries}

	\subsection{Notations}
	For any vectors $x$ and $y$ in $\Rn$ and $\delta \in \bb{R}$, we denote $x\odot y$, $x^{ \delta}$, $x/y$, $|x|$, $x+\delta$, $\sqrt{x}$ as the vectors whose $i$-th entries are given by $x_iy_i$, $x_i^{\delta}$, $x_i/y_i$, $|x_i|$, $x_i + \delta$, and $\sqrt{x_i}$, respectively. 
	We denote $\Rn_+:=\{ x\in \Rn: x_i\geq 0 \text{ for any } 1\leq i\leq n \}$. Moreover, for any subsets $\ca{X}, \ca{Y} \subset \Rn$, we denote $\ca{X}\odot \ca{Y}:= \{x\odot y: x \in \ca{X}, y\in \ca{Y} \}$,  $|\ca{X}|:= \{|x|: x \in \ca{X}\}$ and $\norm{\ca{X}} := \sup\{ \norm{w} : w\in \ca{X}\}$. For any $z \in \Rn$, we denote $z + \ca{X} := \{z\} + \ca{X}$ and $z \odot \ca{X} := \{z\} \odot\ca{X}$. In addition, for any $\ca{W} \subseteq \bb{R}^{m\times n}$ any $\y \subset \Rn$ and any $z \in \Rn$, we denote $\ca{W} z := \{Wz: W \in \ca{W}\}$ and $\ca{W}\ca{Y} := \{Wy: W \in \ca{W}, y \in \ca{Y}\}$.  For a closed convex set $\X \subset \Rn$, we denote $\NX(x)$ as the normal cone of $\X$ at $x \in \X$, and denote $\prox_{\X}(x):= \argmin_{y \in \X} \norm{y-x}^2$. 
 
	
	For any subset $\ca{X} \subseteq \Rn$, we denote $\widetilde{\mathrm{span}}(\ca{X})$ as the smallest subspace containing $\ca{X}$. Moreover, for any $\ca{Z} \subseteq \Rnp$, we denote $\mathrm{span}(\ca{Z}) := \widetilde{\mathrm{span}}(\{Z\lambda: Z \in \ca{Z}, \lambda \in \Rp\})$.

	Furthermore, for any positive sequence $\{\theta_k\}$, we define 
	$\lambda_0 := 0$, $\lambda_i := \sum_{k = 0}^{i-1} \theta_k$ for $i\geq 1$, and $\Lambda(t) := \sup  \{k \geq 0: t\geq \lambda_k\} $. More explicitly, $\Lambda(t) = p$ if $\lambda_p \leq t < \lambda_{p+1}$ for any $p \geq 0$. In particular, $\Lambda(\lambda_p) = p.$ In addition, we denote $\Tregu: \Rp \rightrightarrows \Rp$ as 
	\begin{equation}
		\Tregu(y) := 
		\begin{cases}
			\{y \in \Rp: \norm{y} \leq 1\}, & y = 0,\\
			\left\{\frac{y}{\norm{y}}\right\}, & y \neq 0. 
		\end{cases}
	\end{equation}

	We denote $(\Omega, \ca{F}, \mathbb{P})$ as the probability space that captures the randomness of our analyzed stochastic algorithms. We say that $\{\ca{F}_k\}_{k \in \bb{N}}$ is a filtration  if  $\{\ca{F}_k\}$ is a sequence of $\sigma$-algebras that satisfies $ \ca{F}_0 \subseteq \ca{F}_1 \subseteq \cdots \subseteq \ca{F}_{\infty} \subseteq \ca{F}$. Moreover, we say that a sequence of random vectors $\{\xi_k\}$ is a martingale difference sequence if  $\{\xi_k\}$ is adapted to the filtration $\{ \ca{F}_{k} \}$, $\bb{E}[|\xi_k|] < \infty$ and $\bb{E}\left[ \xi_{k+1} | \ca{F}_{k} \right] = 0$ holds almost surely for any $k\geq 0$. 
A martingale difference sequence $\{\xi_k\}$ is said to be uniformly bounded if there exists a constant $M_{\xi}$ such that $\sup_{k\geq 0}\norm{\xi_k} \leq  M_{\xi}$.

	\subsection{Nonsmooth analysis}
	\label{Section_Nonsmooth_Analysis}

	In this subsection, we introduce some basic concepts in nonsmooth optimization, especially those related to the concept of conservative field \cite{bolte2021conservative}. Interested readers could refer to \cite{bolte2021conservative,davis2020stochastic} for more details. 
	
	We begin our introduction on the concept of Clarke subdifferential \cite{clarke1990optimization}, which plays an essential role in characterizing stationarity and the development of algorithms for nonsmooth optimization problems.

	\begin{defin}[\cite{clarke1990optimization}]
		\label{Defin_Subdifferential}
		For any given locally Lipschitz continuous function $f: \Rn \to \bb{R}$ and any $x \in \Rn$, 
		the Clarke subdifferential $\partial f$ is defined as 
            \begin{equation}
                \partial f(x) := \mathrm{conv}\left( \left\{ \lim_{k \to +\infty} \nabla f(\xk): \xk \to x \text{ and $f$ is differentiable at $\{\xk\}$} \right\}  \right). 
            \end{equation}
	\end{defin}

	Next, we present a brief introduction to the concept of conservative field, which can be applied to characterize how nonsmooth neural networks are differentiated by automatic differentiation (AD) algorithms.
	
	\begin{defin}
		A set-valued mapping $\ca{D}: \Rn \rightrightarrows \Rs$ is a mapping from $\Rn$ to a collection of subsets of $\bb{R}^s$. $\D$ is said to have closed graph, or is graph-closed, if the graph of $\ca{D}$ defined by
		\begin{equation*}
			\mathrm{graph}(\D) := \left\{ (w,z) \in \Rn \times \Rs: w \in \bb{R}^n, z \in \D(w) \right\},
		\end{equation*}
		is a closed subset of $\Rn \times \Rs$.  
	\end{defin}
	
	\begin{defin}
		A set-valued mapping $\ca{D}: \Rn \rightrightarrows \Rs$ is said to be locally bounded if, for any $x \in \Rn$, there exists a neighborhood $V_x$ of $x$ such that $\cup_{y \in V_x}\ca{D}(y)$ is a bounded subset of $\bb{R}^s$. 
	\end{defin}

	\begin{defin}[Aumann’s integral]
		\label{Defin_Aumann_integral}
		Let $(\Theta, \ca{F}, P)$ be a measurable space, and $\D: \Rn  \times \Theta \rightrightarrows \Rn$ be a measurable set-valued mapping. Then for all $x \in \Rn$, the integral of $\ca{D}$ with respect to $P$ is defined as 
		\begin{equation*}
			\bb{E}_{s \sim P}\left[ \D(x, s)  \right] := \left\{ \int_{\Theta} \chi(x, s) ~\mathrm{d}P(s): \text{  $\chi(x, \cdot)$  is integrable, and $\chi(x, s) \in \D(x, s)$ for any $s \in \Theta$}  \right\}. 
		\end{equation*}
	\end{defin}

	\begin{defin}
		An absolutely continuous curve is a continuous mapping $\gamma: \bb{R}_+ \to \Rn $ whose derivative $\gamma'$ exists almost everywhere in $\bb{R}_+$ and $\gamma(t) - \gamma(0)$ equals the Lebesgue integral of $\gamma'$ between $0$ and $t$ for all $t \in \bb{R}_+$, i.e.,
		\begin{equation*}
			\gamma(t) = \gamma(0) + \int_{0}^t \gamma'(u) \mathrm{d} u, \qquad \text{for all $t \in \bb{R}_+$}.
		\end{equation*}
	\end{defin}

	\begin{defin}
		\label{Defin_conservative_jacobian}
		A conservative Jacobian for a locally Lipschitz mapping $H: \Rn \to \Rs$ is a set-valued mapping $\D_H: \Rn \rightrightarrows \bb{R}^{n\times s}$, which is nonempty-valued, locally bounded, graph-closed, and for any absolutely continuous curve $\gamma: \bb{R}_+ \to \Rn$, it holds for almost every $t \in \bb{R}_+$ that 
		\begin{equation}
			\frac{\mathrm{d}(H\circ \gamma) }{\mathrm{d}t} (t) = J^\top \dot{\gamma}(t), \quad \forall\;  J \in \D_H(\gamma(t)).
		\end{equation}
		In particular, when $H$ is a locally Lipschitz function (i.e., $s = 1$), the conservative Jacobian for $H$ is also referred to as the conservative field for $H$. 
	\end{defin}
	
	A mapping $H: \Rn \to \Rs$ is called path-differentiable if it is locally Lipschitz continuous and admits a conservative Jacobian $\D_H$. As demonstrated in \cite{bolte2021conservative}, $H$ is path-differentiable if and only if its Clarke Jacobian is a conservative Jacobian for $H$.  In addition,  it is important to note that any conservative Jacobian is locally bounded  \cite[Remark 3]{bolte2021conservative}.

	It is worth mentioning that the class of path-differentiable functions is general enough to cover the objectives in a wide range of real-world problems. As shown in \cite[Section 5.1]{davis2020stochastic}, any Clarke regular function is path-differentiable. Beyond Clarke regular functions, another important class of path-differentiable functions are functions whose graphs are definable in an $o$-minimal structure \cite[Definition 5.10]{davis2020stochastic}. Usually, the $o$-minimal structure is fixed, and we simply call these functions definable. As demonstrated in \cite{van1996geometric}, any definable function admits a Whitney $C^s$ stratification \cite[Definition 5.6]{davis2020stochastic} for any $s \geq 1$, and hence is path-differentiable  \cite{bolte2021conservative,davis2020stochastic}. To characterize the class of definable functions, \cite{davis2020stochastic,bolte2021conservative,bolte2022differentiating} shows that numerous common activation functions and dissimilarity functions are all definable. Furthermore, since definability is preserved under finite summation and composition  \cite{bolte2021conservative,davis2020stochastic},  for any neural network built from definable blocks, its loss function is definable and thus belongs to the class of path-differentiable functions.

	Moreover, \cite{bolte2007clarke} shows that any Clarke subdifferential of definable functions is definable. Consequently, for any neural network constructed from definable blocks, the conservative field corresponding to the AD algorithms can be chosen as a definable set-valued mapping formulated by compositing the Clarke subdifferentials of all its building blocks \cite{bolte2021conservative}.

	As demonstrated in \cite{bolte2021conservative}, the concept of conservative Jacobian can be regarded as a generalization of Clarke subdifferential, hence can be applied to characterize the stationarity for nonsmooth optimization problems. 
	For \eqref{Prob_simple_min_X}, we present the definition of its stationary points based on the concept of conservative field. 
	\begin{defin}
		Given any path-differentiable function $h: \Rn \to \bb{R}$, and suppose $h$ admits $\D_h$  as its conservative field. Then for any $x \in \X$, we say $x$ is a $\D_h$-stationary point of \eqref{Prob_simple_min_X} if 
		\begin{equation}
			0 \in \conv\left( \D_h(x) + \ca{N}_{\X}(x) \right). 
		\end{equation}
	\end{defin}
	Then based on \cite{bolte2007clarke,bolte2021conservative}, we present the following proposition to show that the definability of $h$ and $\mathcal{D}_h$ leads to the nonsmooth Morse–Sard property \cite{bolte2007clarke} for \eqref{Prob_simple_min_X}. 
	\begin{prop}[Corollary 5 in \cite{bolte2007clarke}]
		\label{Prop_definable_regularity}
		Let $h: \Rn \to \bb{R}$ be a path-differentiable function that admits $\D_h$ as its conservative field.  Suppose $h$, $\D_h$ and $\X$ are definable over $\Rn$, then $\{h(x): x \in \X,~ 0\in \conv(\D_h(x)) + \ca{N}_{\X}(x)\}$ is a finite subset of $\bb{R}$. 
	\end{prop}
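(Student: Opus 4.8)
The plan is to reduce the claim to two facts: that the critical set
$S := \{x \in \X : 0 \in \conv(\D_h(x)) + \NX(x)\}$ is definable, and that $h$ is constant on each connected component of $S$. Once both hold the result is immediate, because a definable subset of $\Rn$ has finitely many connected components and $h$ takes a single value on each, so $h(S)$ is a finite subset of $\bb{R}$, which is exactly the assertion. Note that $0 \in \conv(\D_h(x) + \NX(x)) = \conv(\D_h(x)) + \NX(x)$ since $\NX(x)$ is a convex cone, so this critical set coincides with the $\D_h$-stationary set appearing in the statement.

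First I would verify that $S$ is definable. Since $\X$ is definable, the normal-cone map $x \mapsto \NX(x) = \{v : \inner{v, y - x} \le 0 \text{ for all } y \in \X\}$ is cut out by a first-order definable formula and is therefore definable. Because $\D_h$ is definable and o-minimal structures are stable under taking convex hulls, the map $x \mapsto \conv(\D_h(x))$ is definable as well; moreover $\D_h(x)$ is compact (being locally bounded and graph-closed), so its convex hull is already closed. The membership condition $0 \in \conv(\D_h(x)) + \NX(x)$ asks for the existence of $w \in \conv(\D_h(x))$ and $n \in \NX(x)$ with $w + n = 0$, hence $S$ is a projection of a definable set and is itself definable. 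Consequently $S$ has finitely many connected components, each of which is definably path-connected.

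The crux is the constancy of $h$ along $S$. Fix two points $a, b$ in the same connected component of $S$, and join them by a definable curve $\gamma: [0,1] \to S$; being definable, $\gamma$ is piecewise $C^1$ and hence absolutely continuous. By the defining property of the conservative field $\D_h$ (the case $s = 1$ of the conservative-Jacobian definition), for almost every $t$ we have $\frac{\mathrm{d}(h\circ\gamma)}{\mathrm{d}t}(t) = \inner{v, \dot{\gamma}(t)}$ for every $v \in \D_h(\gamma(t))$; in particular all elements of $\D_h(\gamma(t))$ share the same inner product with $\dot{\gamma}(t)$, so the identity extends to every $w \in \conv(\D_h(\gamma(t)))$ by convex combination. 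At $\gamma(t) \in S$ pick $w_t \in \conv(\D_h(\gamma(t)))$ and $n_t \in \NX(\gamma(t))$ with $w_t + n_t = 0$, giving $\frac{\mathrm{d}(h\circ\gamma)}{\mathrm{d}t}(t) = \inner{w_t, \dot{\gamma}(t)} = -\inner{n_t, \dot{\gamma}(t)}$. Because $\gamma$ stays in the convex set $\X$, we have $\inner{n_t, \gamma(t\pm\epsilon) - \gamma(t)} \le 0$; dividing by $\pm\epsilon$ and letting $\epsilon \to 0^+$ yields both $\inner{n_t, \dot{\gamma}(t)} \le 0$ and $\inner{n_t, \dot{\gamma}(t)} \ge 0$ wherever $\dot{\gamma}(t)$ exists, hence $\inner{n_t, \dot{\gamma}(t)} = 0$. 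Thus $\frac{\mathrm{d}(h\circ\gamma)}{\mathrm{d}t}$ vanishes almost everywhere, and integrating gives $h(b) = h(a)$, so $h$ is constant on the component.

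I expect the main obstacle to be exactly this last passage: converting the abstract stationarity condition $0 \in \conv(\D_h(x)) + \NX(x)$ into the pointwise identity $\frac{\mathrm{d}(h\circ\gamma)}{\mathrm{d}t} = 0$ requires simultaneously invoking the chain rule for the conservative field and exploiting convexity of $\X$ to annihilate the normal-cone term along two-sided tangent directions of the path. The supporting ingredients — definability of the convex hull and of the normal-cone map, and the existence of a definable piecewise-$C^1$ path lying inside $S$ — are standard o-minimal facts, but I would state them carefully, since the whole finiteness conclusion rests on $S$ having finitely many components that are connected by absolutely continuous curves.
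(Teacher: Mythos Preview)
Your argument is correct, but it takes a genuinely different route from the paper. The paper invokes the stratification machinery of Bolte--Daniilidis--Lewis: it first takes a Whitney stratification $\{\M_i\}$ of $\X$ adapted to $h$ on which the projection formula $\ca{P}_{T_x\M_{i(x)}}(\partial h(x)+\NX(x))=\{\grad h(x)\}$ holds, then refines it (using definability of $\D_h$) so that the same formula holds with $\D_h$ in place of $\partial h$, and finally applies the definable Sard theorem on each stratum. By contrast, you bypass both the projection formula and Sard entirely: you show directly that the definable critical set $S$ has finitely many components and that $h$ is constant along any definable curve in $S$, using only the chain rule for conservative fields together with the two-sided normal-cone inequality $\inner{n_t,\dot\gamma(t)}=0$ forced by convexity of $\X$. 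Your approach is more elementary and self-contained, and it makes transparent exactly where the conservative-field hypothesis and the convexity of $\X$ enter; the paper's approach, on the other hand, plugs into an existing Morse--Sard framework and would extend more readily to settings (e.g.\ non-convex constraint sets admitting a suitable stratification) where your normal-cone cancellation trick is unavailable.
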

	\begin{proof}
		From the definability of $h$ and $\X$, \cite{bolte2007clarke} demonstrates that there exists a stratification $\{\M_{i}\}$ such that 
		\begin{equation}
			\ca{P}_{T_x \M_{i(x)}} (\partial h(x) + \ca{N}_{\X}(x)) = \{\grad h(x)\}.
		\end{equation}
            Here $\M_{i(x)}$ refers to the underlying manifold of $x$,   $T_x \M_{i(x)}$ refers to the tangent space at $x$ of the manifold $\ca{M}_i$, $\mathcal{P}$ refers to the orthogonal projection, and $\grad h(x)$ refers to the Riemannian gradient of $f$ with respect to $\M_{i(x)}$.
  
		Together with \cite{bolte2021conservative}, the sets 
		\begin{equation}
			R_i = \{x \in \M_i: \grad h(x)  \neq \ca{P}_{T_x \M_{i(x)}} (\D_h(x) + \ca{N}_{\X}(x))\}, \quad i = 1,.., q,
		\end{equation}
		have dimensions that are strictly lower than that of $\M_i$. Then based on the same techniques as \cite[Theorem 4]{bolte2021conservative}, we can progressively refine the stratification to find a refined stratification $\{\tilde{\M}_i\}$ such that 
		\begin{equation}
			\ca{P}_{T_x \tilde{\M}_{i(x)}} (\D_h(x) + \ca{N}_{\X}(x)) = \{\grad h(x)\}.
		\end{equation}
		Applying the definable Sard's theorem \cite{davis2020stochastic,bolte2021conservative} to each manifold $\M_i$, the results follow. 
	  \end{proof}

	For the optimization problem \eqref{Prob_Ori}, we present the following 
	definition of its KKT points based on the concept of conservative Jacobian. 
	\begin{defin}
		\label{Defin_KKT}
		Suppose  both $f: \Rn \to \bb{R}$ and $c: \Rn \to \Rp$ 
	 in \eqref{Prob_Ori} 	are path-differentiable, and admit $\D_f$ and $\D_c$ as their conservative Jacobian, respectively. Then for any $x \in \X$, we say that $x$ is a $(\D_f, \D_c)$-KKT point of \eqref{Prob_Ori} if $c(x) = 0$ and 
		\begin{equation}
			0 \in \D_f(x) +  \mathrm{span}( \D_c(x) )  + \ca{N}_{\X}(x). 
		\end{equation}
	\end{defin}
	
	It is worth mentioning that when both $f$ and $c$ are differentiable, Definition \ref{Defin_KKT} coincides with the KKT conditions for constrained optimization \cite{jorge2006numerical,beck2014introduction}. 
	
	\subsection{Differential inclusion and stochastic subgradient methods}
	In this subsection, we introduce some fundamental concepts related to the stochastic approximation technique that are essential for the proofs presented in this paper. The concepts discussed here are mainly adopted from \cite{benaim2005stochastic}. Interested readers could refer to \cite{benaim2006dynamics,benaim2005stochastic,borkar2009stochastic,davis2020stochastic} for more details on the stochastic approximation technique. 
	
	We start by presenting the definition of the trajectory of a differential inclusion in the following.
	\begin{defin}
		For any graph-closed set-valued mapping $\D: \Rn \rightrightarrows \Rn$,  we say that an absolutely continuous path $x(t)$ in $\Rn$ is a trajectory of the differential inclusion 
		\begin{equation}
			\label{Eq_def_DI}
			\frac{\mathrm{d} x}{\mathrm{d}t} \in \D(x),
		\end{equation}
		with initial point $x_0$ if $x(0) = x_0$, and $\dot{x}(t) \in \D(x(t))$ holds for almost every $t\geq 0$. 
	\end{defin}
	It is worth mentioning that \cite{aubin2012differential} has established
	that the trajectory of \eqref{Eq_def_DI} exists whenever $\D$ is a locally bounded graph-closed set-valued mapping.

	\begin{defin}
		\label{Defin_delta_expansion}
		For any given set-valued mapping $\D: \Rn \rightrightarrows \Rn$ and any constant $\delta \geq 0$,  the $\delta$-expansion of $\D$, denoted as $\D^{\delta}$,  is defined as
		\begin{equation}
			\D^{\delta}(x) := \{ w \in \Rn: \exists z \in \bb{B}_{\delta}(x), \, \mathrm{dist}(w, \ca{D}(z))\leq \delta \}.
		\end{equation}
	\end{defin}

	\begin{defin}
		\label{Defin_Lyapunov_function}
		Let $\ca{B} \subset \Rn$ be a closed set. A continuous function $\phi:\Rn \to \bb{R}$ is referred to as a Lyapunov function for the differential inclusion \eqref{Eq_def_DI}, with the stable set $\ca{B}$, if it satisfies the following conditions.
		\begin{enumerate}
			\item For any $\gamma$ that is a solution for \eqref{Eq_def_DI} with 
			$\gamma(0) \in \Rn$ , it holds that $\phi(\gamma(t)) \leq \phi(\gamma(0))$ for any $t\geq0$.
			\item For any $\gamma$ that is a solution for \eqref{Eq_def_DI} with $\gamma(0) \notin \ca{B}$, there exists a constant $T>0$ such that {$\phi(\gamma(T)) < \sup_{t\in[0,T]}\phi(\gamma(t)) =\phi(\gamma(0))$}.
		\end{enumerate}
	\end{defin}

	\begin{defin}
		\label{Defin_perturbed_solution}
		We say that an absolutely continuous function $\gamma$
		is a perturbed solution to \eqref{Eq_def_DI}  if there exists a locally integrable function $u: \bb{R}_+ \to \Rn$, such that 
		\begin{itemize}
			\item For any $T>0$, it holds that $\lim\limits_{t \to \infty} \sup\limits_{0\leq l\leq T} \norm{\int_{t}^{t+l} u(s) ~\mathrm{d}s} = 0$. 
			\item There exists $\delta: \bb{R}_+ \to \bb{R}$ such that $\lim\limits_{t \to \infty} \delta(t) = 0$ and $\dot{\gamma}(t) - u(t) \in \D^{\delta(t)}(\gamma(t))$. 
		\end{itemize}
	\end{defin}

	Given a sequence of positive real numbers $\{\eta_k\}$, we then present the definition of the  (continuous-time) interpolated process of $\{\xk\}$ with respect to $\{\eta_k\}$  as follows. 
	\begin{defin}
		For any given sequence of positive real numbers $\{\eta_k\}$, the  (continuous-time) interpolated process of $\{\xk\}$ with respect to $\{\eta_k\}$ is the mapping $w: \bb{R}_+ \to \Rn$ such that 
		\begin{equation}
			w(\lambda_i + s) := x_{i} + \frac{s}{\eta_i} \left( x_{i+1} - x_{i} \right), \quad s\in[0, \eta_i). 
		\end{equation}
		Here $\lambda_0 := 0$, and $\lambda_i := \sum_{k = 0}^{i-1} \eta_k$ for $i\geq 1$.
	\end{defin}

	Now consider the sequence $\{\xk\}$ generated by the  following update scheme,  
	\begin{equation}
		\label{Eq_def_Iter}
		\xkp = \xk + \eta_k(d_k + \xi_{k+1}),
	\end{equation}
	with a diminishing positive sequence of real numbers $\{\eta_k\}$.  The following lemma is an extension of \cite[Proposition 1.3]{benaim2005stochastic}, which allows for inexact evaluations of the set-valued mapping $\D$. It shows that for the sequence $\{\xk\}$ generated by \eqref{Eq_def_Iter}, its interpolated process with respect to $\{\eta_k\}$ is a perturbed solution of the differential inclusion \eqref{Eq_def_DI}.

	\begin{lem}
		\label{Le_interpolated_process}
		Let $\ca{D}: \Rn \rightrightarrows \Rn$ be a set-valued mapping that is nonempty compact convex-valued and graph-closed.
		Suppose the following conditions hold in \eqref{Eq_def_Iter}.
		\begin{enumerate}
			\item For any $T> 0$, it holds that $\lim\limits_{s \to \infty} \sup\limits_{s\leq i \leq \Lambda(\lambda_s + T)}\norm{ \sum_{k = s}^{i} \eta_k \xi_{k+1}} =0$. 
			\item There exist a  nonnegative sequence $\{\delta_k\}$  such that $\lim_{k\to \infty} \delta_k = 0$, and $d_k \in \D^{\delta_k}(\xk)$ holds for any $k\geq 0$.
			\item $\sup_{k \geq 0} \norm{\xk}<\infty$, $\sup_{k \geq 0} \norm{d_k} < \infty$. 
		\end{enumerate}
		Then the interpolated process of $\{\xk\}$ is a perturbed solution for \eqref{Eq_def_DI} with respect to $\{\eta_k\}$. 
	\end{lem}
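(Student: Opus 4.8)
The plan is to adapt the classical argument of \cite[Proposition 1.3]{benaim2005stochastic}, the only genuinely new ingredient being that the iterates use inexact evaluations $d_i \in \D^{\delta_i}(x_i)$ in place of exact selections $d_i \in \D(x_i)$. Concretely, I aim to produce a locally integrable correction $u:\bb{R}_+\to\Rn$ together with a vanishing gauge $\delta(\cdot)$ that certify, via Definition~\ref{Defin_perturbed_solution}, that the interpolated process $w$ is a perturbed solution of \eqref{Eq_def_DI}.

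I would first differentiate $w$. On each interval $(\lambda_i,\lambda_{i+1})$ the interpolated process is affine, so for almost every $t$ in that interval $\dot w(t) = (x_{i+1}-x_i)/\eta_i = d_i + \xi_{i+1}$, where the last equality uses \eqref{Eq_def_Iter}. This suggests defining $u(t):=\xi_{i+1}$ for $t\in[\lambda_i,\lambda_{i+1})$, which gives $\dot w(t)-u(t)=d_i$ for almost every $t$.

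Next I would verify the averaging condition $\lim_{t\to\infty}\sup_{0\le l\le T}\norm{\int_t^{t+l}u(s)\,\mathrm{d}s}=0$. Writing $t\in[\lambda_a,\lambda_{a+1})$ and $t+l\in[\lambda_b,\lambda_{b+1})$, the integral splits into two fractional boundary pieces on the subintervals containing $t$ and $t+l$, plus the full sum $\sum_{k=a+1}^{b-1}\eta_k\xi_{k+1}$. Each boundary piece has norm at most $\eta_a\norm{\xi_{a+1}}$ or $\eta_b\norm{\xi_{b+1}}$; taking the index $i=s$ in the first hypothesis of the lemma shows $\eta_k\xi_{k+1}\to 0$, so the boundary pieces vanish as $t\to\infty$. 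Since $l\le T$ and $t\le\lambda_{a+1}$ force $\lambda_b\le t+l\le\lambda_{a+1}+T$, the running index $b-1$ ranges within $[a+1,\Lambda(\lambda_{a+1}+T)]$, so the full sum is exactly of the form controlled by the first hypothesis with $s=a+1$; as $t\to\infty$ we have $a\to\infty$ and the corresponding supremum tends to $0$. I expect this endpoint bookkeeping---matching the continuous integral to the discrete sum when $t$ and $t+l$ do not land on grid points $\lambda_i$, and checking that the window generated by $l\in[0,T]$ is covered by the window $[s,\Lambda(\lambda_s+T)]$ of the hypothesis---to be the \emph{main obstacle} of the proof.

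Finally I would establish $\dot w(t)-u(t)\in\D^{\delta(t)}(w(t))$. From $\dot w(t)-u(t)=d_i$ and $d_i\in\D^{\delta_i}(x_i)$, the definition of the $\delta$-expansion (Definition~\ref{Defin_delta_expansion}) yields some $z_i\in\bb{B}_{\delta_i}(x_i)$ with $\mathrm{dist}(d_i,\D(z_i))\le\delta_i$. Because $w(t)$ lies on the segment joining $x_i$ and $x_{i+1}$, we have $\norm{w(t)-x_i}\le\norm{x_{i+1}-x_i}=\eta_i\norm{d_i+\xi_{i+1}}$, so the triangle inequality places $z_i$ in $\bb{B}_{\delta_i+\eta_i\norm{d_i+\xi_{i+1}}}(w(t))$. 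Setting $\delta(t):=\delta_i+\eta_i\norm{d_i+\xi_{i+1}}$ on $[\lambda_i,\lambda_{i+1})$ then gives $d_i\in\D^{\delta(t)}(w(t))$ directly from Definition~\ref{Defin_delta_expansion}. To see $\delta(t)\to 0$, the second hypothesis gives $\delta_i\to 0$, the third bounds $\norm{d_i}$, and $\eta_i\to 0$ together with $\eta_i\xi_{i+1}\to 0$ (again from the first hypothesis) forces $\eta_i\norm{d_i+\xi_{i+1}}\to 0$. Combining the averaging condition with this inclusion shows that $w$ is a perturbed solution of \eqref{Eq_def_DI}, which is the claim.
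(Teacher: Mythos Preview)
The paper does not supply its own proof of this lemma; it simply presents the statement as an extension of \cite[Proposition~1.3]{benaim2005stochastic} and moves on. Your proposal reconstructs exactly the standard Bena\"im--Hofbauer--Sorin argument, with the one necessary modification (replacing $d_i\in\D(x_i)$ by $d_i\in\D^{\delta_i}(x_i)$ and absorbing the extra $\delta_i$ into the gauge $\delta(t)$), and the bookkeeping you outline is correct. One small point: your use of $\eta_i\to 0$ is legitimate because the update scheme \eqref{Eq_def_Iter} is introduced in the paper ``with a diminishing positive sequence of real numbers $\{\eta_k\}$'', so this hypothesis is already in force even though it is not repeated in the lemma's enumerated conditions.
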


	The following theorem summarizes the results in \cite{davis2020stochastic}, which 
	establishes the convergence of $\{\xk\}$ generated by \eqref{Eq_def_Iter}. Therefore, we omit its proof for simplicity. 
	
	\begin{theo}
		\label{The_convergence_beniam}
		Let $\ca{D}: \Rn \rightrightarrows \Rn$ be a locally bounded set-valued mapping that is nonempty compact convex-valued and graph-closed. For any sequence $\{\xk\}$, suppose there exists a continuous function $\Psi: \Rn \to \bb{R}$ and a closed subset $\ca{B}$ of $\Rn$ such that
		\begin{enumerate}
			\item $\Psi$ is bounded from below, and the set $\{\Psi(x) \,:\,x\in\ca{B}\}$
			has empty interior in $\bb{R}$.
			\item $\Psi$ is a Lyapunov function for the differential inclusion \eqref{Eq_def_DI} that admits $\ca{B}$ as its stable set. 
			\item The interpolated process of $\{\xk\}$ is a perturbed solution of  \eqref{Eq_def_DI}. 
		\end{enumerate}
		Then any cluster point of $\{\xk\}$ lies in $\ca{B}$, and the sequence $\{\Psi(\xk)\}$ converges. 
	\end{theo}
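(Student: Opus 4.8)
The plan is to reduce the statement to two standard pillars of the differential-inclusion approach of Bena\"{\i}m--Hofbauer--Sorin that underlie the results of \cite{davis2020stochastic,benaim2005stochastic}: first, that the limit set of a bounded perturbed solution is internally chain transitive (ICT) and invariant for \eqref{Eq_def_DI}; and second, that a Lyapunov function whose image on the stable set has empty interior must be constant on any ICT set, which is moreover forced to lie inside the stable set. Once these two facts are in place, the theorem follows by translating the conclusion from the continuous interpolated process back to the discrete iterates.

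First I would record that, by hypothesis 3, the interpolated process $w$ of $\{\xk\}$ is a perturbed solution of \eqref{Eq_def_DI}, and that $w$ is bounded: its range lies in the convex hull of $\{\xk\}$, which is bounded (boundedness of $\{\xk\}$ is part of the standing setup here, being exactly what is needed to invoke Lemma \ref{Le_interpolated_process} for hypothesis 3). I would then invoke the structural result that the limit set
\[
	L(w) := \bigcap_{t \geq 0} \overline{\{w(s): s \geq t\}}
\]
of a bounded perturbed solution is a nonempty, compact, connected, internally chain transitive and invariant set for $\dot{x} \in \D(x)$. This is the core technical content of \cite{benaim2005stochastic} as adapted in \cite{davis2020stochastic}; its proof passes through showing that $w$ is an asymptotic pseudotrajectory and that such trajectories have ICT limit sets. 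Here the local boundedness, graph-closedness, and compact convex-valuedness of $\D$ are precisely what guarantee existence and good behavior of the trajectories involved.

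Second I would apply the Lyapunov dichotomy to $L(w)$. Since $\Psi$ is a Lyapunov function for \eqref{Eq_def_DI} with stable set $\ca{B}$ (hypothesis 2) and the set $\{\Psi(x) : x \in \ca{B}\}$ has empty interior (hypothesis 1), the standard ICT--Lyapunov lemma yields $L(w) \subseteq \ca{B}$ together with $\Psi \equiv c^*$ on $L(w)$ for some constant $c^*$. The mechanism is that internal chain transitivity lets one return arbitrarily close to any point of $L(w)$ along $\varepsilon$-chains, while Definition \ref{Defin_Lyapunov_function} forces $\Psi$ to be nonincreasing along genuine trajectories and strictly decreasing away from $\ca{B}$; reconciling these over arbitrarily long returns pins $\Psi$ to a single value on $L(w)$ and forbids $L(w)$ from leaving $\ca{B}$, the empty-interior hypothesis being exactly what rules out a continuum of admissible Lyapunov values.

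Finally I would pass back to the discrete iterates. Because $\eta_k \to 0$, the set of cluster points of $\{\xk\}$ coincides with $L(w)$; hence every cluster point of $\{\xk\}$ lies in $\ca{B}$. For the convergence of $\{\Psi(\xk)\}$, boundedness of $\{\xk\}$ and continuity of $\Psi$ make $\{\Psi(\xk)\}$ bounded, and since all its subsequential limits are values of $\Psi$ on $L(w)$, they all equal $c^*$; a bounded real sequence with a unique cluster point converges, so $\Psi(\xk) \to c^*$. The main obstacle is the first pillar---establishing that the limit set of the bounded perturbed solution is internally chain transitive---since this is where the full strength of the stochastic-approximation machinery is required and is exactly the part developed in detail in \cite{davis2020stochastic}; by comparison the Lyapunov step and the continuous-to-discrete translation are routine once ICT-ness is secured.
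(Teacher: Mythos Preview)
Your proposal is correct and follows precisely the standard Bena\"{\i}m--Hofbauer--Sorin route that the cited references develop. Note, however, that the paper does not actually give a proof of this theorem: it states that the result ``summarizes the results in \cite{davis2020stochastic}'' and explicitly omits the proof, so there is no original argument to compare against---your sketch is a faithful reconstruction of what those references contain.
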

	
	Similar frameworks under slightly different conditions can be found in \cite{borkar2009stochastic,davis2020stochastic,duchi2018stochastic}, while some recent works \cite{bianchi2021closed,bolte2022long} focus on analyzing the convergence of \eqref{Eq_def_Iter} under more relaxed conditions. Interested readers could refer to those works for details.

\section{Global Convergence}
	In this section, we prove the global convergence of the framework \eqref{Eq_Framework}.
	In Section 3.1, we introduce the class of stochastic subgradient methods that can be embedded into the Lagrangian-based methods through the framework \eqref{Eq_Framework}. Then Section 3.2 demonstrates that under mild conditions, the sequence $\{\xk\}$ generated by 
	the framework \eqref{Eq_Framework} converges to  $(\D_f, \D_c)$-KKT points of \eqref{Prob_Ori}. Additionally, Section 3.3 presents the applicability of our framework  \eqref{Eq_Framework} to \eqref{Prob_Ori} with expectation-formulated constraints.

	\subsection{Embeddable stochastic subgradient methods}
	We begin by introducing the concept of  
	``embeddable stochastic subgradient method'', which refers to a class of subgradient methods characterized by \eqref{Eq_Intro_subgradient_method}. In particular, the embeddable stochastic subgradient methods enjoy convergence guarantees and can be embedded into the Lagrangian-based methods through framework \eqref{Eq_Framework}. We first make the following assumption on the set-valued mappings $\{\Phi_k\}$ in \eqref{Eq_Intro_subgradient_method}. 
	\begin{assumpt}
		\label{Assumption_Phi}
		There exists a locally bounded mapping $T_{\Phi}: \Rn \times \X \times  \Rq \to \bb{R}_+$  such that 
		\begin{equation}
			\label{Eq_condition_Phi}
			\sup_{k\geq 0} \left\{ \mathrm{dist}\left(\Phi_k(g, x, y, \eta), (x, y)\right)  \right\} \leq \eta T_{\Phi}(g,x,y),  \quad \forall\; (g,x,y, \eta) \in  \Rn \times \X \times  \Rq \times \bb{R}_+.
		\end{equation}
	\end{assumpt}
	Assumption \ref{Assumption_Phi} illustrates that $\eta_k$ controls the distance between successive iterates $(\xkp, \ykp)$ and $(\xk, \yk)$ for all $k\geq 0$. Hence the sequence $\{\eta_k\}$ can be regarded as the stepsizes of the stochastic subgradient methods in \eqref{Eq_Intro_subgradient_method}.

	Then following the existing ODE approaches \cite{benaim2005stochastic}, we introduce the following conditions on the optimization problem \eqref{Prob_simple_min_X} and the stochastic subgradient method \eqref{Eq_Intro_subgradient_method}. 
	\begin{cond}
		\label{Cond_stable_alg}
		\begin{enumerate}
			\item There exists a path-differentiable function $h$ that admits $\D_h$ as its convex-valued conservative field. Moreover, the set $\{h(x): 0 \in \D_h(x) + \NX(x), ~x \in \X\}$ has empty interior in $\bb{R}$. 
			\item The sequence of iterates $\{(\xk, \yk)\}$ is uniformly bounded in $\X \times \Rq$.
			\item There exists a nonnegative diminishing sequence $\{\delta_k\} \subset \bb{R}_+$ and a sequence of noises $\{\xi_k\} \subset \Rn$ such that $g_k \in \D_h^{\delta_k}(\xk) + \xi_{k+1}$ holds for all $k\geq 0$. 
			\item For any $T>0$, the stepsizes $\{\eta_k\}$ and the noises $\{\xi_{k}\}$ satisfy 
			\begin{equation}
				\label{Eq_Defin_stable_algorithm_noise}
				\lim_{k\to \infty} \eta_k = 0, \quad \sum_{k = 0}^{\infty} \eta_k = +\infty, \quad \lim_{i\to \infty}\sup_{i\leq j\leq \Lambda(\lambda_i + T)} \norm{\sum_{k = i}^{j} \eta_k \xi_{k+1}} = 0, \quad \sup_{k\geq 0} \norm{\xi_k} <\infty.  
			\end{equation}
		\end{enumerate}
	\end{cond}
	
	The formulation of \eqref{Eq_Intro_subgradient_method} is general enough to encompass a wide range of stochastic subgradient methods that enjoy convergence guarantees from ODE approaches, including proximal SGD \cite{davis2020stochastic},  proximal momentum SGD \cite{ruszczynski2020convergence,le2023nonsmooth}. Moreover, when  $\X = \Rn$, \eqref{Eq_Intro_subgradient_method} encompasses a wide range of stochastic subgradient methods for unconstrained optimization, such as INNA \cite{castera2021inertial}, Adam \cite{xiao2023adam}, SignSGD \cite{xiao2023convergence}, etc.  
	
	These existing works establish the global convergence for their analyzed stochastic subgradient methods based on similar conditions as Condition \ref{Cond_stable_alg}. Typically, for a given stochastic subgradient method expressed in \eqref{Eq_Intro_subgradient_method}, verifying its embeddability usually directly follows from the established proof techniques analogous to those used in demonstrating global convergence through ODE approaches. In the following remark, we make some detailed comments on Condition \ref{Cond_stable_alg}.

	\begin{rmk}
		The Condition \ref{Cond_stable_alg}(1) is referred to as the nonsmooth Morse-Sard's property \cite{bolte2007clarke,davis2020stochastic,bolte2021conservative}, which is an essential condition for establishing the global convergence of the stochastic subgradient methods through ODE approaches. As demonstrated in Proposition \ref{Prop_definable_regularity}, the nonsmooth Morse-Sard's property is implied by the definability of $h$, $\D_h$, and $\X$, hence it is a common assumption in many existing works. Moreover, in ODE approaches, the assumption on the uniform boundedness of the iterates (i.e., $\{(\xk, \yk)\}$ in \eqref{Eq_Intro_subgradient_method}) is usually a default assumption. Interested readers could refer to \cite{davis2020stochastic,bolte2020mathematical,ruszczynski2020convergence,bolte2021conservative,bolte2021nonsmooth,castera2021inertial,bianchi2022convergence,le2023nonsmooth} for more examples on the involvement of nonsmooth Morse-Sard's property and the assumption on the uniform boundedness of the iterates. Therefore, we can conclude that Condition \ref{Cond_stable_alg}(1)-(2) are mild in practice.

		Furthermore, Condition \ref{Cond_stable_alg}(3) illustrates that the sequence $\{\xi_k\}$ can be regarded as the evaluation noises in the evaluation of $\{g_k\}$. Then Condition \ref{Cond_stable_alg}(4) demonstrates that the evaluation noises in the given stochastic subgradient method are controlled. As illustrated in \cite{benaim2005stochastic}, when $\{\xi_k\}$ is a uniformly bounded martingale difference sequence, choosing $\eta_k = o(1/
		log(k))$ guarantees the validity of \eqref{Eq_Defin_stable_algorithm_noise} almost surely. Therefore, Assumption \ref{Cond_stable_alg}(4) is also commonly made in various existing works.

		As a result, we can conclude that all the conditions in  Condition \ref{Cond_stable_alg} are prevalent in existing works on the convergence properties of stochastic subgradient methods by ODE approaches.  
	\end{rmk}

	In the following, we present the definition of the concept of {\it embeddable stochastic subgradient method}, where Condition \ref{Cond_stable_alg} leads to the global convergence of \eqref{Eq_Intro_subgradient_method} for solving \eqref{Prob_simple_min_X}. 
	\begin{defin}
		\label{Defin_stable_stochastic_method}
		Given a sequence of set-valued mappings $\{\Phi_k\}$, we say that $\{\Phi_k\}$ represents an embeddable stochastic subgradient method if Assumption \ref{Assumption_Phi} holds, and for any path-differentiable function $h$, Condition \ref{Cond_stable_alg} guarantees that   any cluster point of the sequence $\{\xk\}$ 
		generated by \eqref{Eq_Intro_subgradient_method} lies in $\{x \in \X, 0 \in \D_h(x) + \NX(x)\}$ and the sequence $\{h(\xk)\}$ converges.
	\end{defin}

	\subsection{Basic assumptions and main results}

	In this subsection, based on the concept of embeddable stochastic subgradient methods in Definition \ref{Defin_stable_stochastic_method}, we present the basic assumptions on the constrained optimization problem \eqref{Prob_Ori} and the framework \eqref{Eq_Framework}. 
	
	We begin our theoretical analysis with the following assumptions on the path-differentiability of the objective function $f$ and constraints $c$. 
	\begin{assumpt}
		\label{Assumption_f}
		\begin{enumerate}
			\item The objective function $f$ is  path-differentiable and admits $\D_f: \Rn \rightrightarrows \Rn$ as its  convex-valued conservative field. 
			\item The constraint mapping $c$ is path-differentiable and admits $\D_{c}: \Rn \rightrightarrows \Rnp$ as its convex-valued conservative Jacobian. 
			\item $\inf_{x \in \X} f(x) >-\infty$. 
		\end{enumerate}
	\end{assumpt}
	
	As discussed in \cite{bolte2021conservative}, the definability of $f$ and $c$ implies their path-differentiability, hence Assumption \ref{Assumption_f} is mild in practice.  Under Assumption \ref{Assumption_f},
 the penalty function $g$ defined in  \eqref{Eq_Intro_penalty_function} is path-differentiable, and  the set-valued mapping $\D_g: \Rn \rightrightarrows \Rn$ defined by
	\begin{equation}
		\D_g(x) := \conv\left(\D_f(x) +  \D_c(x) \left(\beta \Tregu(c(x)) + \rho c(x)\right)\right) 
	\end{equation}
	is a convex-valued conservative field of $g$, as stated in the next lemma. 
	The proof of Lemma \ref{Le_conservative_g} directly follows from the chain rule of conservative field in \cite{bolte2021conservative}, hence is omitted for simplicity. 
	\begin{lem}
		\label{Le_conservative_g}
		Suppose Assumption \ref{Assumption_f} holds, then $g$ is a path-differentiable function that admits $\D_g$ as its convex-valued conservative field. 
	\end{lem}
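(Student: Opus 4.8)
The plan is to exhibit $g$ as the sum of $f$ and the composition $\phi \circ c$, where $\phi: \Rp \to \bb{R}$ is the scalar function $\phi(y) := \beta \norm{y} + \frac{\rho}{2}\norm{y}^2$, and then to assemble the claimed conservative field from the sum rule and chain rule for conservative fields established in \cite{bolte2021conservative}. Concretely, I would first determine a conservative field for the outer map $\phi$, then push it through $c$ via the chain rule to obtain a conservative field for $\phi \circ c$, add the conservative field $\D_f$ of $f$, and finally convexify. Local Lipschitz continuity of $g$ is inherited from that of $f$, $c$, and $\phi$, so once a conservative field is exhibited, path-differentiability of $g$ follows.

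For the outer map, I would argue that $\phi$ is path-differentiable with conservative field $y \mapsto \beta \Tregu(y) + \rho y$. The term $\frac{\rho}{2}\norm{y}^2$ is continuously differentiable with gradient $\rho y$, so its singleton Clarke subdifferential $\{\rho y\}$ is a conservative field. The term $\beta\norm{y}$ is convex, hence Clarke regular and therefore path-differentiable, and its Clarke subdifferential coincides with the convex subdifferential of the Euclidean norm, which is exactly $\beta\Tregu(y)$: the scaled closed unit ball at $y=0$ and the singleton $\{\beta y/\norm{y}\}$ for $y\neq 0$. Applying the sum rule for conservative fields then yields $\D_\phi(y) := \beta\Tregu(y) + \rho y$ as a conservative field for $\phi$.

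Next I would invoke the chain rule for conservative fields \cite{bolte2021conservative}: since $c$ admits the conservative Jacobian $\D_c$ and $\phi$ admits the conservative field $\D_\phi$, the product map $x \mapsto \D_c(x)\,\D_\phi(c(x))$ is a conservative field for $\phi\circ c$, where the product is taken in the matrix-times-set sense $\ca{W}\ca{Y}=\{Wv: W\in\ca{W},\,v\in\ca{Y}\}$ introduced in the notation; substituting $\D_\phi(c(x))=\beta\Tregu(c(x))+\rho c(x)$ gives the set $\D_c(x)\big(\beta\Tregu(c(x))+\rho c(x)\big)$. Adding $f$ and applying the sum rule once more produces $\D_f(x)+\D_c(x)\big(\beta\Tregu(c(x))+\rho c(x)\big)$ as a conservative field for $g$. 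Finally, taking the convex hull yields $\D_g$; this preserves conservativeness because along any absolutely continuous curve $\gamma$ every selection of a conservative field returns the same value $\frac{\mathrm{d}(g\circ\gamma)}{\mathrm{d}t}$ when paired with $\dot\gamma$, so this common value is unchanged under convex combinations, while convex-valuedness is now immediate.

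The main technical point to verify is the set of regularity properties required of $\D_g$ in Definition \ref{Defin_conservative_jacobian}, namely nonemptiness, local boundedness, and graph-closedness, with the delicate case being the points where $c(x)=0$. Nonemptiness and local boundedness are inherited directly: $\D_f$ and $\D_c$ are locally bounded, $\Tregu$ takes values in the unit ball hence is globally bounded, and $x\mapsto\rho c(x)$ is continuous. Graph-closedness is the one place requiring care, since $\Tregu$ genuinely jumps from the unit ball to the unit sphere as $c(x)$ leaves the origin; here I would use that $\Tregu$ is itself graph-closed (being the convex subdifferential of the norm, it is outer semicontinuous), that $c$ is continuous, and that the matrix-vector product of a graph-closed locally bounded set-valued map with a continuous map, followed by a Minkowski sum and convex hull, preserves graph-closedness under the standing local boundedness. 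Thus all defining properties transfer, and combined with the sum and chain rules this gives that $\D_g$ is a convex-valued conservative field for $g$, completing the proof. I expect this graph-closedness bookkeeping at $c(x)=0$ to be the only nonroutine step, everything else being a direct application of the calculus rules of \cite{bolte2021conservative}.
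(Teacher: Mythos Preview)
Your proposal is correct and follows exactly the approach the paper indicates: the paper states that the proof ``directly follows from the chain rule of conservative field in \cite{bolte2021conservative}'' and omits the details, and your decomposition $g = f + \phi\circ c$ together with the sum and chain rules from that reference is precisely the intended argument. Your additional verification of graph-closedness at points where $c(x)=0$ and of the preservation of conservativeness under convexification fills in details the paper leaves implicit.
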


	Moreover, we present Proposition \ref{Prop_relation_DfDc_Dg} to show that $\{x \in \X: 0 \in \D_g(x) + \ca{N}_{\X}(x), \; c(x) =0\}$ is a subset of the set of $(\D_f, \D_c)$-KKT points of \eqref{Prob_Ori}.
	\begin{prop}
		\label{Prop_relation_DfDc_Dg}
		Suppose Assumption \ref{Assumption_f} holds. Then for any $x \in \X$ satisfying $0 \in \D_g(x) + \ca{N}_{\X}(x)$ and $c(x) = 0$, it holds that $x$ is a $(\D_f, \D_c)$-KKT point of \eqref{Prob_Ori}. 
	\end{prop}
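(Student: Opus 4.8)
The plan is to substitute the feasibility condition $c(x)=0$ directly into the definition of $\D_g$ and to show that the resulting set is contained in $\D_f(x)+\mathrm{span}(\D_c(x))$, after which the stationarity hypothesis transfers verbatim to the KKT inclusion. First I would evaluate the coefficient set at a feasible point: since $c(x)=0$ we have $\Tregu(c(x))=\Tregu(0)=\{y\in\Rp:\norm{y}\leq 1\}$ and $\rho c(x)=0$, so that $\beta\Tregu(c(x))+\rho c(x)=\{y\in\Rp:\norm{y}\leq\beta\}$ is the closed ball of radius $\beta$ in $\Rp$. Consequently $\D_g(x)=\conv\left(\D_f(x)+\D_c(x)\{y\in\Rp:\norm{y}\leq\beta\}\right)$, where the matrix-set times vector-set product is read with the set-valued conventions of Section 2.1.

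The key step is the inclusion $\D_c(x)\{y\in\Rp:\norm{y}\leq\beta\}\subseteq\mathrm{span}(\D_c(x))$. This is immediate from the definition $\mathrm{span}(\D_c(x))=\widetilde{\mathrm{span}}(\{Z\lambda:Z\in\D_c(x),\lambda\in\Rp\})$: every element $Zy$ with $Z\in\D_c(x)$ and $\norm{y}\leq\beta$ lies in the generating family, hence in the subspace. Since $\D_f(x)$ is convex by Assumption \ref{Assumption_f}(1) and $\mathrm{span}(\D_c(x))$ is a subspace, the Minkowski sum $\D_f(x)+\mathrm{span}(\D_c(x))$ is a convex set containing $\D_f(x)+\D_c(x)\{y\in\Rp:\norm{y}\leq\beta\}$; taking convex hulls therefore yields $\D_g(x)\subseteq\D_f(x)+\mathrm{span}(\D_c(x))$.

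To finish, I would use the stationarity hypothesis $0\in\D_g(x)+\ca{N}_{\X}(x)$ to pick $w\in\D_g(x)$ and $v\in\ca{N}_{\X}(x)$ with $w+v=0$. The inclusion just established gives $w\in\D_f(x)+\mathrm{span}(\D_c(x))$, whence $0=w+v\in\D_f(x)+\mathrm{span}(\D_c(x))+\ca{N}_{\X}(x)$. Combined with the standing hypothesis $c(x)=0$, Definition \ref{Defin_KKT} then certifies that $x$ is a $(\D_f,\D_c)$-KKT point of \eqref{Prob_Ori}.

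I do not anticipate any genuine analytic obstacle: the whole argument is set arithmetic built on the definitions. The only point requiring care is the bookkeeping of the set-valued conventions—specifically, confirming that at feasible points the regularization term contributes precisely the radius-$\beta$ ball, that this ball is absorbed into $\mathrm{span}(\D_c(x))$, and that the outer convex hull does not leave the already convex Minkowski sum $\D_f(x)+\mathrm{span}(\D_c(x))$. It is exactly the convexity of $\D_f(x)$ together with the subspace (hence convex) structure of $\mathrm{span}(\D_c(x))$ that renders the convex hull harmless.
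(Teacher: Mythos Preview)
Your proposal is correct and follows essentially the same route as the paper: both arguments substitute $c(x)=0$ into the definition of $\D_g$, invoke the convexity of $\D_f(x)$ to handle the outer convex hull, and then absorb the $\D_c(x)$-term into $\mathrm{span}(\D_c(x))$. Your phrasing via ``the convex hull of a subset of the convex set $\D_f(x)+\mathrm{span}(\D_c(x))$ stays inside it'' is a slightly cleaner packaging of the paper's step $\conv(\D_f(x)+\beta\D_c(x)\Tregu(c(x)))\subseteq \D_f(x)+\beta\conv(\D_c(x)\Tregu(c(x)))$, but the content is identical.
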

	\begin{proof}
		For any $x \in \X$ satisfying $0 \in \D_g(x) + \ca{N}_{\X}(x)$ and $c(x) = 0$, it holds that 
		\begin{equation}
			0 \in \conv\left(\D_f(x) + \beta\D_c(x) \Tregu(c(x)) \right) + \ca{N}_{\X}(x). 
		\end{equation}
		From the convexity of $\D_f(x)$, it holds that 
		\begin{equation}
			0 \in \D_f(x) + \beta\conv\left(\D_c(x) \Tregu(c(x)) \right) + \ca{N}_{\X}(x) \subseteq \D_f(x) + \mathrm{span}(\D_c(x) ) + \ca{N}_{\X}(x). 
		\end{equation}
		This shows that $x$ is a $(\D_f, \D_c)$-KKT point of \eqref{Prob_Ori}. 
	  \end{proof}
	
	Next, we introduce the following condition on the weak Morse-Sard property of $g$. 
	\begin{assumpt}
		\label{Assumption_weak_sard}
		The set $\{g(x): x \in \X, ~0 \in \D_g(x) + \NX(x)\}$ has empty interior in $\bb{R}$. 
	\end{assumpt}
	
	\begin{rmk}
		It is worth mentioning that, from  Proposition \ref{Prop_definable_regularity} and the definability of $f$, $\D_f$, $c$ and $\D_c$, the set $\{g(x): x \in \X,~ 0 \in \D_g(x) + \ca{N}_{\X}(x)\}$ is finite and hence has empty interior in $\bb{R}$. Therefore, Assumption \ref{Assumption_weak_sard} is a mild assumption in practical scenarios, as demonstrated in \cite{davis2020stochastic,bolte2021conservative}. 
	\end{rmk}

	Here we present the assumptions on the framework \eqref{Eq_Framework}. 
	\begin{assumpt}
		\label{Assumption_framework}
		\begin{enumerate}
			\item The sequence of iterates $\{(\xk, \yk)\}$ is uniformly bounded in $\X \times \Rq$. That is, $\sup_{k\geq 0} \norm{\xk} + \norm{\yk} < +\infty$. 
			\item There exists a nonnegative sequence $\{\delta_k\}$ such that, 
			\begin{equation*}
				\lim_{k\to +\infty} \delta_k = 0, \quad d_k \in \D_f^{\delta_k}(\xk), \quad J_{c,k} \in \D_c^{\delta_k}(\xk),  \quad \norm{\wk - c(\xk)} \leq \delta_k,
			\end{equation*}
			hold  for all $k\geq 0$.
			\item The sequence $\{\xi_{k}\}$ is uniformly bounded. Moreover, the sequence of stepsizes $\{\eta_k\}$ is positive and satisfies
			\begin{equation}
				\lim_{k\to \infty} \eta_k = 0, \quad \sum_{k = 0}^{\infty} \eta_k = +\infty, \quad \lim_{i\to \infty}\sup_{i\leq j\leq \Lambda(\lambda_i+ T)} \norm{\sum_{k = i}^{j} \eta_k \xi_{k+1}} = 0,
			\end{equation}
			for any $T>0$ and any $k\geq 0$. 
			\item There exists $0 < \theta_{\min} \leq \theta_{\max} < \beta$ such that  $\theta_{\min} \leq \theta_k \leq \theta_{\max}$ holds for all $k\geq 0$. 
		\end{enumerate}
	\end{assumpt}

	In the following theorem, we present the main result of this subsection, in the sense that the sequence $\{\xk\}$ weakly converges to the set $\{ x \in \X: 0 \in \D_g(x) + \ca{N}_{\X}(x)\}$. For a clearer presentation of the main results, the proof of Theorem \ref{Theo_convergence_Dg} is given in Appendix \ref{Appendix_proof_Theo_convergence}. 
	\begin{theo}
		\label{Theo_convergence_Dg}
		For any given sequence of set-valued mappings $\{\Phi_k\}$ that represents an embeddable stochastic subgradient method, suppose Assumption \ref{Assumption_f}, Assumption \ref{Assumption_weak_sard} and Assumption \ref{Assumption_framework} hold. Then for the sequence $\{\xk\}$ generated by \eqref{Eq_Framework}, any cluster point of $\{\xk\}$ lies in the subset $\left\{x \in \X: 0 \in \D_g(x) + \ca{N}_{\X}(x)\right\}$ and the sequence $\{g(\xk)\}$ converges.
	\end{theo}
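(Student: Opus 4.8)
The plan is to reduce the statement to the defining property of an embeddable stochastic subgradient method (Definition \ref{Defin_stable_stochastic_method}), applied to the penalty function $h = g$ from \eqref{Eq_Intro_penalty_function}. Since \eqref{Eq_Framework} drives its primal update through $(\xkp, \ykp) \in \Phi_k(l_{x,k}, \xk, \yk, \etak)$, it is exactly an instance of \eqref{Eq_Intro_subgradient_method} with stochastic subgradient $g_k = l_{x,k}$. Hence, once Condition \ref{Cond_stable_alg} is verified for $h = g$ with conservative field $\D_g$, embeddability immediately yields that every cluster point of $\{\xk\}$ lies in $\{x \in \X: 0 \in \D_g(x) + \NX(x)\}$ and that $\{g(\xk)\}$ converges. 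Among the four items of Condition \ref{Cond_stable_alg}, item (1) follows from Lemma \ref{Le_conservative_g} together with Assumption \ref{Assumption_weak_sard}, item (2) is Assumption \ref{Assumption_framework}(1), and item (4) is contained in Assumption \ref{Assumption_framework}(3) with $\xi_k$ inherited from \eqref{Eq_Framework}. The whole weight of the proof therefore falls on item (3): producing a nonnegative $\delta_k' \to 0$ with $l_{x,k} \in \D_g^{\delta_k'}(\xk) + \xi_{k+1}$ for all $k$, i.e. showing that $l_{x,k}$ is asymptotically a noisy evaluation of $\D_g(\xk)$.

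Before that I would record two preliminary facts. First, the dual sequence is bounded: rewriting the last line of \eqref{Eq_Framework} as $\lambda_{k+1} = (1 - \theta_k/\beta)\lambda_k + \theta_k\,\mathrm{regu}(\wkp)$ and using $\norm{\mathrm{regu}(\cdot)} \leq 1$ with $0 < \theta_k \leq \theta_{\max} < \beta$ (Assumption \ref{Assumption_framework}(4)) yields $\norm{\lambda_{k+1}} \leq (1 - \theta_k/\beta)\norm{\lambda_k} + \theta_k$, whose recursion gives $\limsup_k \norm{\lambda_k} \leq \beta$. Second, the primal iterates move slowly: by Assumption \ref{Assumption_Phi}, $\norm{\xkp - \xk} \leq \etak\, T_{\Phi}(l_{x,k}, \xk, \yk)$, and since $\{\xk\}$, $\{\yk\}$ are bounded, $\{l_{x,k}\}$ is bounded (local boundedness of $\D_f$, $\D_c$ and boundedness of $\lambda_k$, $\wk$), and $\etak \to 0$, we obtain $\xkp - \xk \to 0$; local Lipschitzness of $c$ then gives $c(\xkp) - c(\xk) \to 0$, and $\norm{\wk - c(\xk)} \leq \delta_k \to 0$ gives $\wkp - \wk \to 0$.

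For item (3) I would take $\xi_{k+1}$ as in \eqref{Eq_Framework} and set $\delta_k' := \inf\{\delta \geq 0: l_{x,k} - \xi_{k+1} \in \D_g^{\delta}(\xk)\}$, and prove $\delta_k' \to 0$ by contradiction. Suppose along a subsequence $\delta_{k_j}' \geq \alpha > 0$; passing to a further subsequence, boundedness lets us assume $x_{k_j} \to x^*$, $\lambda_{k_j} \to \lambda^*$, $d_{k_j} \to d^*$ and $J_{c,k_j} \to J^*$, where graph-closedness of $\D_f$, $\D_c$ with $\delta_k \to 0$ (Assumption \ref{Assumption_framework}(2)) force $d^* \in \D_f(x^*)$ and $J^* \in \D_c(x^*)$. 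Now split on $c(x^*)$. If $c(x^*) \neq 0$, a two-timescale argument (below) gives $\lambda_{k_j} \to \beta\,\mathrm{regu}(c(x^*))$, so $l_{x,k_j} - \xi_{k_j+1} \to d^* + J^*\bigl(\beta\,\mathrm{regu}(c(x^*)) + \rho c(x^*)\bigr) \in \D_g(x^*)$. If $c(x^*) = 0$, then $\Tregu(0)$ is the unit ball and $\norm{\lambda^*} \leq \beta$ gives $\lambda^* \in \beta\Tregu(0)$, so $l_{x,k_j} - \xi_{k_j+1} \to d^* + J^*\lambda^* \in \D_f(x^*) + \D_c(x^*)\,\beta\Tregu(0) \subseteq \D_g(x^*)$. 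In either case the limit lies in $\D_g(x^*)$; taking $z = x^*$ in Definition \ref{Defin_delta_expansion} then forces $\delta_{k_j}' \to 0$, contradicting $\delta_{k_j}' \geq \alpha$.

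The \emph{main obstacle} is the two-timescale tracking claim in the case $c(x^*) \neq 0$, which is where the discontinuity of $\mathrm{regu}$ at the origin must be confronted. The argument exploits the separation of stepsizes: since $\etak \to 0$, once $\xk$ is close to $x^*$ it dwells in a neighborhood $U$ on which $\norm{c(\cdot)} \geq \epsilon > 0$ and $\mathrm{regu} \circ c$ is Lipschitz, and the number of consecutive steps spent in $U$ before exiting tends to infinity (escape requires cumulative displacement bounded below, hence $\sum \etak$ bounded below over the sojourn). On such a stretch the error $e_k := \norm{\lambda_k - \beta\,\mathrm{regu}(c(\xk))}$ satisfies a contraction $e_{k+1} \leq (1 - \theta_{\min}/\beta)\bigl(e_k + \beta\norm{\mathrm{regu}(c(\xkp)) - \mathrm{regu}(c(\xk))} + O(\delta_k)\bigr)$ with factor bounded away from $1$ and per-step drift of order $\etak + \delta_k \to 0$, so $e_{k_j} \to 0$. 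The essential point is that whenever the target direction could change abruptly — namely near $c = 0$ — we are in the complementary case $c(x^*) = 0$, where no tracking is needed because the set-valued $\Tregu(0)$ (the unit ball) absorbs every admissible $\lambda^*$ with $\norm{\lambda^*} \leq \beta$. This is precisely why $\D_g$ is built from $\Tregu$ rather than from the single-valued $\mathrm{regu}$, and it is the mechanism that lets the dual tracking error be folded entirely into the vanishing expansion radius $\delta_k'$ rather than into the noise $\xi_{k+1}$.
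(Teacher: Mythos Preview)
Your proof is correct and follows the same overall architecture as the paper's: verify Condition~\ref{Cond_stable_alg} for $h=g$ and invoke embeddability, with all the work going into item (3), i.e., showing that $l_{x,k}-\xi_{k+1}$ asymptotically lies in $\D_g(\xk)$. Both arguments bound $\{\lambda_k\}$ identically, both exploit the geometric contraction of the dual update over a backward window $[k_j-T_j,k_j]$ with $T_j\to\infty$ and $\sum_{i=k_j-T_j}^{k_j}\eta_i\to 0$, and both use $\norm{\xkp-\xk}=O(\eta_k)$ to freeze $c(\xk)$ on that window. The only real difference is in how the limit is extracted: you split on whether $c(x^*)=0$, handling the feasible case by the direct observation that $\norm{\lambda^*}\leq\beta$ already gives $\lambda^*\in\beta\Tregu(0)$, and the infeasible case by a scalar error recursion for $e_k=\norm{\lambda_k-\beta\,\mathrm{regu}(c(\xk))}$ on a sojourn where $\mathrm{regu}\circ c$ is Lipschitz. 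The paper (Lemma~\ref{Le_close_lambda_Tregu}) instead unrolls $\lambda_{k_j}$ as an approximate convex combination of points in $\beta\Tregu(w_i)$ and invokes Carath\'eodory plus graph-closedness of $\Tregu$ to treat both cases uniformly. Your route is more elementary and makes the role of the discontinuity of $\mathrm{regu}$ at the origin explicit; the paper's route is more uniform and would transfer unchanged if $\Tregu$ were replaced by any other convex-valued graph-closed map.
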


	The result in Theorem \ref{Theo_convergence_Dg} illustrates that the sequence $\{\xk\}$ finds the stationary points of $g$ within $\X$. Moreover, from Proposition \ref{Prop_relation_DfDc_Dg}, the sequence $\{\xk\}$ finds $(\D_f, \D_c)$-KKT points of \eqref{Prob_Ori} whenever all of its cluster points are feasible. Therefore, in the rest of this subsection, we derive a sufficient condition that enforces the feasibility of all the cluster points of $\{\xk\}$. 
	
	Motivated by \cite{li2023stochastic}, we consider the following regularity condition.
	\begin{cond}
		\label{Cond_regularity_condition}
		There exists a prefixed constant $\nu > 0$ such that for $x \in \X$, it holds that 
		\begin{equation}
			\label{Eq_regularity_condition}
			\nu \norm{c(x)} \leq \mathrm{dist}\left( - \D_c(x) c(x), \ca{N}_{\X}(x)  \right).
		\end{equation}
	\end{cond}

	As demonstrated in \cite{li2023stochastic}, Condition \ref{Cond_regularity_condition} holds for a wide range of applications \cite{li2021rate,xie2021complexity,lin2022complexity,li2023stochastic}, such as affine-equality constrained problems with possibly additional polyhedral or ball constraint sets \cite{li2021rate}, multi-class Neyman-Pearson classification problems \cite{lin2022complexity}. Moreover, as demonstrated in \cite{lin2022complexity,li2023stochastic}, Condition \ref{Cond_regularity_condition}  is neither strictly stronger nor strictly weaker than the uniform Slater’s condition in \cite{ma2020quadratically}, or the strong MFCQ condition adopted in \cite{boob2023stochastic}. Furthermore, it is easy to verify that the constraints analyzed in \cite{bolte2018nonconvex} satisfy Condition \ref{Cond_regularity_condition}.

	The following proposition illustrates that when $f$ is a globally Lipschitz continuous function and $\beta$ is sufficiently large, Condition \ref{Cond_regularity_condition}  guarantees that $g$ is an exact penalty function for \eqref{Prob_Ori}, in the sense that any $\D_g$-stationary point of \eqref{Prob_simple_min_X} satisfies $c(x) = 0$.  
	\begin{prop}
		\label{Prop_exactness_g}
		Suppose Assumption \ref{Assumption_f} and Condition \ref{Cond_regularity_condition} hold, $f$ is $M_f$-Lipschitz continuous over $\X$,  and $\beta >  \frac{M_f}{\nu}$. Then for any $x \in \X$ such that $0 \in \D_g(x) + \ca{N}_{\X}(x)$, it holds that $c(x) = 0$. 
	\end{prop}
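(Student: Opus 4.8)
The plan is to argue by contradiction: suppose some $x \in \X$ satisfies $0 \in \D_g(x) + \NX(x)$ but $c(x) \neq 0$, and derive $\beta \le M_f/\nu$, contradicting the hypothesis $\beta > M_f/\nu$. First I would exploit that when $c(x) \neq 0$ the regularization map reduces to the singleton $\Tregu(c(x)) = \{ c(x)/\norm{c(x)} \}$, so that the defining expression for $\D_g(x)$ becomes $\conv(\D_f(x) + \D_c(x) u)$ with $u := \beta \frac{c(x)}{\norm{c(x)}} + \rho c(x)$. Since $\D_f(x)$ is convex by Assumption \ref{Assumption_f}(1) and $\D_c(x) u$ is the image of the convex set $\D_c(x)$ under the linear map $J \mapsto Ju$, their Minkowski sum is already convex, so the outer convex hull is redundant. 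Hence the inclusion $0 \in \D_g(x) + \NX(x)$ yields genuine elements $a \in \D_f(x)$, $J \in \D_c(x)$, and $v \in \NX(x)$ with $a + J u + v = 0$.

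Next I would rewrite $Ju$ in terms of $z := J c(x)$. Because $u = (\beta/\norm{c(x)} + \rho)\, c(x)$, we get $Ju = t\, z$ with $t := \beta/\norm{c(x)} + \rho > 0$, and the stationarity relation becomes $a + t z + v = 0$, i.e. $-z = \frac{1}{t} a + \frac{1}{t} v$. Since $\NX(x)$ is a convex cone and $t > 0$, the vector $\frac{1}{t} v$ still lies in $\NX(x)$, so $\mathrm{dist}(-z, \NX(x)) \le \norm{-z - \frac{1}{t} v} = \frac{1}{t}\norm{a}$. On the other hand, $-z = -J c(x) \in -\D_c(x) c(x)$, so by the definition of set distance and the regularity inequality \eqref{Eq_regularity_condition} in Condition \ref{Cond_regularity_condition} we obtain $\mathrm{dist}(-z, \NX(x)) \ge \mathrm{dist}(-\D_c(x) c(x), \NX(x)) \ge \nu \norm{c(x)}$. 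Combining the two estimates and using $t \ge \beta/\norm{c(x)}$ (because $\rho \ge 0$) gives $\nu \norm{c(x)} \le \frac{\norm{a}}{t} \le \frac{\norm{a}\,\norm{c(x)}}{\beta}$.

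Since $c(x) \neq 0$ I can cancel $\norm{c(x)} > 0$ to obtain $\nu \beta \le \norm{a}$. The remaining ingredient is the bound $\norm{a} \le M_f$ coming from the $M_f$-Lipschitz continuity of $f$ on $\X$, which forces the elements of $\D_f(x)$ to have norm at most $M_f$; this produces $\nu\beta \le M_f$, contradicting $\beta > M_f/\nu$, so $c(x) = 0$. I expect the main obstacle to be carrying out the algebraic isolation cleanly — in particular, absorbing the scaling $1/t$ into the normal cone and setting up the two-sided estimate on $\mathrm{dist}(-z, \NX(x))$ so that Condition \ref{Cond_regularity_condition} applies to the \emph{specific} $J$ extracted above — together with justifying the bound $\norm{a} \le M_f$ directly for the conservative field $\D_f$ rather than merely for the Clarke subdifferential, which is the one place where the Lipschitz hypothesis is essential.
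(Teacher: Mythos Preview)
Your argument is correct and matches the paper's proof in substance: the paper also works under $c(x)\neq 0$, rewrites $\D_g(x)+\NX(x)$ as $\D_f(x)+(\beta/\norm{c(x)}+\rho)\,\D_c(x)c(x)+\NX(x)$, and combines Condition~\ref{Cond_regularity_condition} with the bound $\norm{\D_f(x)}\le M_f$ to obtain $\mathrm{dist}(0,\D_g(x)+\NX(x))\ge(\beta+\rho\norm{c(x)})\nu-M_f>0$, which is exactly your estimate before you discard the $\rho$ term. Your explicit justification that the convex hull is redundant and your flagging of the $\norm{a}\le M_f$ issue for general conservative fields are in fact more careful than the paper, which uses that bound without comment.
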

	\begin{proof}
		For any $x \in \X$ and $c(x) \neq 0$, Condition \ref{Cond_regularity_condition} implies that  
		\begin{equation}
			\nu \leq \mathrm{dist}\left( 0, \D_c(x) \frac{c(x)}{\norm{c(x)}} + \ca{N}_{\X}(x)  \right).
		\end{equation}
		Together with the formulation of $\D_g$ and the Lipschitz continuity of $f$, it holds that 
		\begin{equation}
			\begin{aligned}
				&\mathrm{dist}\left( 0, \D_g(x) + \ca{N}_{\X}(x)  \right) =\mathrm{dist}\left( 0, \D_f(x) + \beta \D_c(x) \frac{c(x)}{\norm{c(x)}} + \rho \D_c(x)c(x) + \ca{N}_{\X}(x)  \right) \\
				\geq{}&  ( \beta  + \rho \norm{c(x)})\nu - M_f > 0. 
			\end{aligned}
		\end{equation}
		Therefore, whenever $x \in \X$ satisfies $0 \in \D_g(x) + \ca{N}_{\X}(x)$, it holds that $c(x) = 0$. 
	  \end{proof}

	The following corollary illustrates that, the sequence $\{\xk\}$ 
	generated by the framework \eqref{Eq_Framework} weakly converges to a $(\D_f, \D_c)$-KKT point of \eqref{Prob_Ori}. The proof of Corollary \ref{Coro_convergence_DfDc_exactness} directly follows from Theorem \ref{Theo_convergence_Dg} and Proposition \ref{Prop_exactness_g}, hence it is omitted for simplicity. 
	\begin{coro}
		\label{Coro_convergence_DfDc_exactness}
		For any given sequence of set-valued mappings $\{\Phi_k\}$ that represents an embeddable stochastic subgradient method, suppose  Assumption \ref{Assumption_f}, Assumption \ref{Assumption_weak_sard}, Assumption \ref{Assumption_framework}, and Condition \ref{Cond_regularity_condition}  hold. Moreover, we assume that $f$ is $M_f$-Lipschitz continuous over $\X$,  and $\beta > \frac{M_f}{\nu}$. Then any cluster point of $\{\xk\}$ is a $(\D_f, \D_c)$-KKT point of \eqref{Prob_Ori}, and the sequence of function values $\{f(\xk)\}$ converges. 
	\end{coro}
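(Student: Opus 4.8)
The plan is to chain the three structural results already in hand — Theorem \ref{Theo_convergence_Dg}, Proposition \ref{Prop_exactness_g}, and Proposition \ref{Prop_relation_DfDc_Dg} — and then to upgrade the convergence of $\{g(\xk)\}$ to that of $\{f(\xk)\}$. First I would invoke Theorem \ref{Theo_convergence_Dg}: since $\{\Phi_k\}$ represents an embeddable stochastic subgradient method and Assumptions \ref{Assumption_f}, \ref{Assumption_weak_sard}, and \ref{Assumption_framework} are in force, every cluster point $\bx$ of $\{\xk\}$ satisfies $0 \in \D_g(\bx) + \NX(\bx)$, and moreover $\{g(\xk)\}$ converges, say to some limit $g^\star$.

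Next I would use the exactness of the penalty. Under Assumption \ref{Assumption_f}, Condition \ref{Cond_regularity_condition}, the $M_f$-Lipschitz continuity of $f$, and the hypothesis $\beta > M_f/\nu$, Proposition \ref{Prop_exactness_g} asserts that any $x \in \X$ with $0 \in \D_g(x) + \NX(x)$ is feasible, i.e. $c(x) = 0$. Applying this to the cluster point $\bx$ from the previous step yields $c(\bx) = 0$, after which Proposition \ref{Prop_relation_DfDc_Dg} immediately certifies that $\bx$ is a $(\D_f, \D_c)$-KKT point of \eqref{Prob_Ori}. This settles the first conclusion.

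The remaining step is the convergence of $\{f(\xk)\}$, which is the one genuinely non-formal point. Since $g(\xk) = f(\xk) + \beta\norm{c(\xk)} + \frac{\rho}{2}\norm{c(\xk)}^2$ and only $\{g(\xk)\}$ is known to converge, I would first show $\norm{c(\xk)} \to 0$, arguing by contradiction via compactness: Assumption \ref{Assumption_framework}(1) makes $\{\xk\}$ bounded, so if $\norm{c(\xk)}$ did not vanish there would be a subsequence bounded away from zero; a further convergent subsequence of it would produce a cluster point $\bx$ with $c(\bx) \neq 0$ by continuity of $c$, contradicting the feasibility of all cluster points established above. With $\norm{c(\xk)} \to 0$ in hand, the identity $f(\xk) = g(\xk) - \beta\norm{c(\xk)} - \frac{\rho}{2}\norm{c(\xk)}^2$ forces $f(\xk) \to g^\star$, so $\{f(\xk)\}$ converges.

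I expect this last passage to be the only real obstacle. Although the corollary is advertised as following directly, the convergence of $\{f(\xk)\}$ is not purely formal: one must move from the feasibility of the (possibly many) cluster points of $\{\xk\}$ to the quantitative statement $\norm{c(\xk)} \to 0$, and this leans on boundedness of the iterates together with the continuity of $c$. Every other step is bookkeeping in which the stated hypotheses are matched to the premises of the cited results.
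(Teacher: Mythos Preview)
Your proposal is correct and matches the paper's approach exactly: the paper omits the proof, stating only that it ``directly follows from Theorem \ref{Theo_convergence_Dg} and Proposition \ref{Prop_exactness_g}'', and your chaining of Theorem \ref{Theo_convergence_Dg}, Proposition \ref{Prop_exactness_g}, and Proposition \ref{Prop_relation_DfDc_Dg} is precisely what that omission entails. Your observation that the convergence of $\{f(\xk)\}$ requires the additional compactness argument (boundedness of $\{\xk\}$ plus feasibility of all cluster points forces $\norm{c(\xk)}\to 0$) is spot-on and fills in the one step the paper leaves implicit.
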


	\subsection{Application in expectation-constrained optimization}
	
	In this subsection, we illustrate how the framework \eqref{Eq_Framework} can be applied to solve \eqref{Prob_Ori}, where the constraint mapping $c$ takes an expectation formulation. Throughout this subsection, based on Assumption \ref{Assumption_f}, we make the following assumptions on the objective function $f$ and constraint mapping $c$ in \eqref{Prob_Ori}. 
	\begin{assumpt}
		\label{Assumption_f_expecation_constraints}
		\begin{enumerate}
			\item There exists two probability spaces $(\Omega_f, \ca{F}_f, P_f)$ and $(\Omega_c, \ca{F}_c, P_c)$ and two mappings $F: \Rn \times \Omega_f \to \bb{R}$ and $C: \Rn \times \Omega_c \to \Rp$ such that for almost every $\omega_f \in \Omega_f$ and $\omega_c \in \Omega_c$, $F(\cdot, \omega_x)$ and $C(\cdot, \omega_c)$ are path-differentiable over $\Rn$ that admit $\D_F(\cdot, \omega_f)$ and $\D_C(\cdot, \omega_c)$ as their conservative fields (Jacobians), respectively.  
			\item For any compact subset $\ca{Y}$ of $\Rn$, there exists $M_{\ca Y}>0$ such that for almost every $\omega_f \in \Omega_f$ and $\omega_c \in \Omega_c$, it holds that 
			\begin{equation}
				\sup_{x \in \Y} \max\left\{|F(x, \omega_f)|, \norm{C(x, \omega_c)}, \norm{\D_F(x, \omega_f)}, \norm{\D_C(x, \omega_c)}\right\} \leq M_{\ca Y}. 
			\end{equation}
			\item In \eqref{Prob_Ori}, we have $f(x) = \bb{E}_{\omega_f \sim P_f}[F(x, \omega_f)]$ and $c(x) = \bb{E}_{\omega_c \sim P_c}[C(x, \omega_c)]$ holds for any $x \in \X$. 
			\item $f$ is bounded from below. 
		\end{enumerate}
	\end{assumpt}

	As illustrated in Assumption \ref{Assumption_f_expecation_constraints}, both the objective function $f$ and the constraint mapping $c$ can be expressed as the expectation of $F$ and $C$ respectively, in the sense of Aumann's integral in Definition \ref{Defin_Aumann_integral}. Therefore, under Assumption \ref{Assumption_f_expecation_constraints}, only noisy evaluations of $f$ and $c$ are available. Hence \eqref{Prob_Ori} can be categorized as an optimization problem with expectation constraints studied by \cite{boob2023stochastic,li2023stochastic}.

	We first present the formulation of $\D_f$ and $\D_c$ based on  Assumption \ref{Assumption_f_expecation_constraints} in Lemma \ref{Le_formulation_DfDc}.  As the results directly follow from \cite{bolte2023subgradient}, we omit the proof.
	\begin{lem}[\cite{bolte2023subgradient}]
		\label{Le_formulation_DfDc}
		Suppose Assumption \ref{Assumption_f_expecation_constraints} hold. Then we have 
		\begin{itemize}
			\item $f$ is a path-differentiable function that admits a conservative field
			given as  
			\begin{equation}
				\D_f(x) =\conv\left(\bb{E}_{\omega_f \sim P_f}[\D_F(x, \omega_f)] \right).
			\end{equation}
			\item $c$ is a path-differentiable mapping that admits a conservative Jacobian
			given as 
			\begin{equation}
				\D_c(x) = \conv\left(\bb{E}_{\omega_c \sim P_c}[\D_C(x, \omega_c)] \right). 
			\end{equation}
		\end{itemize}
	\end{lem}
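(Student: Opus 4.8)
The plan is to verify directly that the two set-valued mappings proposed in Lemma \ref{Le_formulation_DfDc} satisfy all four defining properties of a conservative field/Jacobian from Definition \ref{Defin_conservative_jacobian}: nonempty-valuedness, local boundedness, graph-closedness, and the integral (chain-rule) identity along absolutely continuous curves. I would carry out the scalar objective $f$ in full and then observe that the argument for the constraint mapping $c$ is identical, applied componentwise to $C(\cdot, \omega_c): \Rn \to \Rp$. Nonempty-valuedness and local boundedness are the easy part. Assumption \ref{Assumption_f_expecation_constraints}(2) supplies a uniform bound $\norm{\D_F(x, \omega_f)} \leq M_{\ca Y}$ over any compact set $\ca{Y} \subset \Rn$, so every measurable selection $\omega_f \mapsto \chi(\omega_f) \in \D_F(x, \omega_f)$ is integrable and the Aumann integral of Definition \ref{Defin_Aumann_integral} is a nonempty set contained in the ball of radius $M_{\ca Y}$; taking the convex hull preserves nonemptiness and this uniform bound, which gives local boundedness.

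The core of the argument is the integral identity. Fix an absolutely continuous curve $\gamma: \bb{R}_+ \to \Rn$. For almost every $\omega_f$, path-differentiability of $F(\cdot, \omega_f)$ gives, for almost every $t$, the identity $\frac{d}{dt}(F(\cdot, \omega_f)\circ\gamma)(t) = \inner{v, \dot{\gamma}(t)}$ for every $v \in \D_F(\gamma(t), \omega_f)$; in particular $\D_F(\gamma(t), \omega_f)$ is single-valued in the direction $\dot{\gamma}(t)$. I would then take any $J \in \D_f(\gamma(t)) = \conv\left(\bb{E}_{\omega_f \sim P_f}[\D_F(\gamma(t), \omega_f)]\right)$, represent it as a convex combination of Aumann integrals of measurable selections $\chi(\omega_f) \in \D_F(\gamma(t), \omega_f)$, and compute $\inner{J, \dot{\gamma}(t)} = \int \inner{\chi(\omega_f), \dot{\gamma}(t)} \, dP_f(\omega_f) = \int \frac{d}{dt}(F(\cdot, \omega_f)\circ\gamma)(t) \, dP_f(\omega_f)$, where the last equality uses the per-$\omega_f$ identity and is therefore independent of the particular selection. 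Differentiating $f(\gamma(t)) = \bb{E}_{\omega_f \sim P_f}[F(\gamma(t), \omega_f)]$ under the integral sign then yields $\frac{d}{dt}(f\circ\gamma)(t) = \int \frac{d}{dt}(F(\cdot, \omega_f)\circ\gamma)(t) \, dP_f(\omega_f) = \inner{J, \dot{\gamma}(t)}$, which is exactly the required identity for all $J$ simultaneously; convexification is harmless precisely because every element of the integral pairs identically with $\dot{\gamma}(t)$.

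For graph-closedness I would take a sequence $(x_n, J_n) \to (x, J)$ with $J_n \in \D_f(x_n)$, represent each $J_n$ through selections of $\D_F(\cdot, \omega_f)$, and pass to the limit using the closed graph of each $\D_F(\cdot, \omega_f)$ together with the uniform bound of Assumption \ref{Assumption_f_expecation_constraints}(2), invoking a Fatou/dominated-convergence argument for Aumann integrals and a measurable selection theorem to produce a limiting selection landing in $\D_F(x, \omega_f)$. This is the standard closedness statement for expectations of graph-closed maps, established in \cite{bolte2023subgradient}.

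The main obstacle, as I see it, is the measure-theoretic bookkeeping that makes the interchange in the second paragraph rigorous: justifying differentiation under the integral sign (a Leibniz/dominated-convergence step resting on the uniform bound from Assumption \ref{Assumption_f_expecation_constraints}(2)), and reordering the quantifiers from ``for a.e. $\omega_f$, for a.e. $t$'' to ``for a.e. $t$, for a.e. $\omega_f$'' via a Fubini-type argument on the product of the time and sample spaces, which requires joint measurability of $(t, \omega_f) \mapsto \D_F(\gamma(t), \omega_f)$ and an application of Aumann's measurable selection theorem. Once these interchanges are secured, both the integral identity and graph-closedness follow routinely; this measure-theoretic content is exactly what \cite{bolte2023subgradient} supplies, which is why it is cited here rather than reproved.
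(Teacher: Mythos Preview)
The paper does not prove this lemma at all: it simply states that the result follows directly from \cite{bolte2023subgradient} and omits the proof. Your proposal therefore goes well beyond what the paper does, supplying an actual sketch of the argument---verifying nonempty-valuedness and local boundedness from the uniform bound in Assumption \ref{Assumption_f_expecation_constraints}(2), establishing the chain-rule identity along absolutely continuous curves by exchanging expectation and differentiation, and outlining graph-closedness via a limiting argument on selections. Your outline is sound, and you correctly flag the two genuine technical issues (the Fubini swap of ``for a.e.\ $\omega_f$, for a.e.\ $t$'' and the measurable-selection/dominated-convergence machinery), which are precisely the content of \cite{bolte2023subgradient}. In short, both you and the paper ultimately lean on that reference; the difference is only that you unpack what the reference actually provides, whereas the paper treats it as a black box.
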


	It is worth mentioning that when $c$ takes a deterministic formulation, noiseless approximate evaluations of $\D_c$ and $c$ are available, hence we can directly choose $w_k = c(\xk)$ for all $k\geq 0$ in the framework \eqref{Eq_Framework}. However, when the constraint mapping $c$ takes an expectation formulation, we cannot directly choose $w_k$ as the noiseless evaluation of $c(\xk)$. To compute $\{\wk\}$ that satisfies Assumption \ref{Assumption_framework}(2) almost surely, we should track the values of  $\{c(\xk)\}$ from its noisy evaluations. Motivated by the update schemes in \cite{ghadimi2020single,chen2021solving,gurbuzbalaban2022stochastic}, we consider the following update scheme for $\{\wk\}$:
	\begin{equation}
		\label{Eq_EC_update_wk}
		\wkp = \wk - \tilde{\tau} \eta_k (\wk - C(\xk, \omega_{c,k}) ) + C(\xkp, \omega_{c,k}) - C(\xk, \omega_{c,k})
	\end{equation}
	where $\tilde{\tau} > 0$ is a given parameter. The update scheme \eqref{Eq_EC_update_wk} is single-timescale, in the sense that its stepsizes diminish at the same rate as the stepsizes for the primal variables $\{\xk\}$. Moreover, $C(\xkp, \omega_{c,k}) - C(\xk, \omega_{c,k})$ is the correction term, which is designed to drive $\{\wk\}$ as noiseless estimations of $\{c(\xk)\}$. In addition, it is worth mentioning that the update scheme  \eqref{Eq_EC_update_wk} only requires one additional evaluation of $C(\xkp, \omega_{c,k})$ in the $k$-th iteration, hence is computationally efficient in practice.

	Then under Assumption \ref{Assumption_f_expecation_constraints}, we introduce the following linearized Lagrangian-based method for expectation-constrained optimization (ECLM) based on the framework \eqref{Eq_Framework}, 
	\begin{equation}
		\label{Eq_Framework_EC}
		\tag{ECLM}
		\left\{
		\begin{aligned}
			& \text{Randomly and independently draw $\omega_{f,k} \sim P_f$ and $\omega_{c,k}, \tilde{\omega}_{c,k} \sim P_c$ },\\
			& g_k \in \D_F(\xk, \omega_{f,k}), \quad \hat{J}_{c,k} \in \D_C(\xk, \tilde{\omega}_{c,k}),\\
			& l_{x,k} = g_k + \hat{J}_{c,k} ( \lambda_k + \rho \wk ),\\
			& (\xkp, \ykp) \in \Phi_k(l_{x,k}, \xk, \yk, \eta_k), \\
			& \wkp = \wk - \tilde{\tau} \eta_k (\wk - C(\xk, \omega_{c,k}) ) + C(\xkp, \omega_{c,k}) - C(\xk, \omega_{c,k}), \\
			& \lambda_{k+1} = \lambda_k + \theta_k \left( \mathrm{regu}(\wkp) - \frac{1}{\beta}\lambda_k  \right).
		\end{aligned}
		\right.
	\end{equation}
	Here $\hat{J}_{c,k}$ refers to the noisy evaluation of the Jacobian of $c$ at $\xk$.

	In the following lemma, we show that the sequence $\{w_k\}$ in \eqref{Eq_Framework_EC} asymptotically serves as an approximate noiseless evaluation of $\{c(\xk)\}$. 
	\begin{lem}
		\label{Le_expectation_wk}
		Suppose Assumption \ref{Assumption_f_expecation_constraints} holds, and the framework \eqref{Eq_Framework_EC} satisfies the following conditions, 
		\begin{enumerate}
			\item The sequence of iterates $\{\xk\}$ is uniformly bounded almost surely.
			\item The sequence $\{\eta_k\}$ is positive and satisfies $\sum_{k = 0}^{+\infty} \eta_k = +\infty$ and $\lim_{k\to +\infty} |\eta_k \log(k)| = 0$.  
		\end{enumerate}
		Then almost surely, we have 
		\begin{equation}
			\lim_{k\to +\infty} \norm{\wk - c(\xk)} = 0.
		\end{equation}
	\end{lem}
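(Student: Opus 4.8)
The plan is to track the error $e_k := \wk - c(\xk)$ and to prove $e_k \to 0$ almost surely by recasting its recursion as a stochastic approximation scheme of the form \eqref{Eq_def_Iter} for the globally contractive differential inclusion $\dot{e} \in \D(e) := \{-\tilde{\tau} e\}$. First I would fix two filtrations: let $\ca{F}_k$ be generated by all the randomness determining $(\xk, \wk, \lambda_k, \yk)$, and set $\ca{G}_k := \ca{F}_k \vee \sigma(\omega_{f,k}, \tilde{\omega}_{c,k})$, so that $\xkp$ is $\ca{G}_k$-measurable whereas $\omega_{c,k}$ is drawn independently of $\ca{G}_k$. Combining Assumption \ref{Assumption_f_expecation_constraints}(3) with this independence (via the freezing lemma) then gives the conditional identities $\bb{E}[C(\xk, \omega_{c,k}) \mid \ca{F}_k] = c(\xk)$ and $\bb{E}[C(\xkp, \omega_{c,k}) \mid \ca{G}_k] = c(\xkp)$.

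Substituting $\wk = e_k + c(\xk)$ into \eqref{Eq_EC_update_wk} and rearranging, I would obtain
\begin{equation*}
	e_{k+1} = (1 - \tilde{\tau}\eta_k)\, e_k + \eta_k\, \zeta_{k+1}, \qquad \zeta_{k+1} := -\tilde{\tau} N_k + \tfrac{1}{\eta_k} M_k,
\end{equation*}
where $N_k := c(\xk) - C(\xk, \omega_{c,k})$ and $M_k := \big(C(\xkp, \omega_{c,k}) - C(\xk, \omega_{c,k})\big) - \big(c(\xkp) - c(\xk)\big)$. The identities above yield $\bb{E}[N_k \mid \ca{F}_k] = 0$ and $\bb{E}[M_k \mid \ca{G}_k] = 0$, whence $\bb{E}[\zeta_{k+1} \mid \ca{F}_k] = 0$ (by the tower property, since $\ca{F}_k \subseteq \ca{G}_k$), so $\{\zeta_{k+1}\}$ is a martingale difference sequence adapted to $\{\ca{F}_k\}$.

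The crucial step is to bound $\{\zeta_{k+1}\}$ uniformly almost surely. Since $\{\xk\}$ is bounded by Condition~1, the iterates remain in a compact set $\ca{Y}$, on which Assumption \ref{Assumption_f_expecation_constraints}(2) supplies a uniform bound $M_{\ca{Y}}$ on $\norm{C(\cdot,\omega_c)}$ and $\norm{\D_C(\cdot,\omega_c)}$, and hence a uniform Lipschitz constant $M_{\ca{Y}}$ for both $C(\cdot,\omega_c)$ and $c$ over $\ca{Y}$; thus $\norm{N_k} \le 2 M_{\ca{Y}}$ almost surely. For $M_k$, Assumption \ref{Assumption_Phi} controls the single-step displacement through $\norm{\xkp - \xk} \le \eta_k\, T_{\Phi}(l_{x,k}, \xk, \yk)$, so that $\norm{M_k} \le 2 M_{\ca{Y}} \norm{\xkp - \xk} = O(\eta_k)$ and $\norm{M_k/\eta_k}$ is bounded; this relies on the iterates remaining in a bounded region so that $T_{\Phi}(l_{x,k},\xk,\yk)$ stays bounded along the trajectory. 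The resulting contraction $\norm{e_{k+1}} \le (1 - \tilde{\tau}\eta_k)\norm{e_k} + \eta_k\norm{\zeta_{k+1}}$, valid once $\eta_k$ is small enough that $1 - \tilde{\tau}\eta_k \in (0,1)$, then shows that $\{e_k\}$ (equivalently $\{\wk\}$) is itself bounded almost surely.

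Finally I would feed this into the ODE machinery. The recursion is exactly \eqref{Eq_def_Iter} with $d_k = -\tilde{\tau} e_k \in \D(e_k)$ and noise $\xi_{k+1} = \zeta_{k+1}$, and $\D(e) = \{-\tilde{\tau} e\}$ is nonempty, compact, convex-valued, graph-closed and locally bounded. Because $\eta_k = o(1/\log k)$ and $\{\zeta_{k+1}\}$ is a uniformly bounded martingale difference sequence, the window-noise estimate required by Lemma \ref{Le_interpolated_process} holds almost surely; together with $\delta_k = 0$ and the boundedness just shown, this makes the interpolated process of $\{e_k\}$ a perturbed solution of $\dot{e} \in \D(e)$. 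Taking $\Psi(e) = \norm{e}^2$, which is bounded below, satisfies $\frac{\mathrm{d}}{\mathrm{d}t}\Psi(\gamma(t)) = -2\tilde{\tau}\norm{\gamma(t)}^2 \le 0$ along solutions with strict decrease off the origin, and whose stable set $\ca{B} = \{0\}$ has image $\{0\}$ of empty interior, Theorem \ref{The_convergence_beniam} forces every cluster point of $\{e_k\}$ into $\{0\}$; boundedness then upgrades this to $e_k \to 0$, i.e. $\lim_k \norm{\wk - c(\xk)} = 0$ almost surely. I expect the main obstacle to be the uniform boundedness of $\zeta_{k+1}$ — specifically controlling $M_k/\eta_k$ — since it requires simultaneously invoking the $O(\eta_k)$ displacement bound from Assumption \ref{Assumption_Phi} and the almost-sure uniform Lipschitzness from Assumption \ref{Assumption_f_expecation_constraints}(2), and carefully disentangling the two martingale terms through the distinct filtrations $\ca{F}_k$ and $\ca{G}_k$ (the correction term $C(\xkp,\omega_{c,k}) - C(\xk,\omega_{c,k})$ being precisely the device that makes $M_k$ a genuine $O(\eta_k)$ martingale increment).
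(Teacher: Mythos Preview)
Your proposal is correct and follows essentially the same approach as the paper: define the error $e_k = \wk - c(\xk)$, rewrite its recursion as a stochastic approximation of the form $e_{k+1} = (1-\tilde\tau\eta_k)e_k + \eta_k\zeta_{k+1}$ with a uniformly bounded martingale difference noise, and then apply the ODE machinery to the contractive inclusion $\dot e = -\tilde\tau e$. The paper's proof is terser---it cites \cite[Proposition 3.33]{benaim2005stochastic} directly rather than invoking Lemma~\ref{Le_interpolated_process} and Theorem~\ref{The_convergence_beniam} with an explicit Lyapunov function---but your more careful treatment of the two filtrations $\ca{F}_k \subseteq \ca{G}_k$ (needed because $\xkp$ depends on $\omega_{f,k},\tilde\omega_{c,k}$ while $\omega_{c,k}$ is still fresh) is a genuine improvement in rigor over the paper's somewhat informal conditioning argument.
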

	\begin{proof}
		Let $u_k := \wk - c(\xk)$ and $\zeta_{x, \omega} = C(x, \omega) - c(x)$. Then for any $k\geq 0$, it holds that 
		\begin{equation}
			\bb{E}_{\omega_{c,k} \sim P_c}[\zeta_{\xk, \omega_{c,k}}  |\ca{F}_k] = 0, \quad  \bb{E}_{\omega_{c,k} \sim P_c}[\zeta_{\xkp, \omega_{c,k}} - \zeta_{\xk, \omega_{c,k}} |\ca{F}_k] = 0.  
		\end{equation}
		Moreover, from the local Lipschitz continuity of $C(\cdot, \omega_{c,k})$, we can conclude that there exists a locally bounded function $\widetilde T_\Phi$, such that $\norm{\zeta_{\xkp, \omega_{c,k}} - \zeta_{\xk, \omega_{c,k}}} \leq \eta_k \widetilde T_{\Phi}(\xk, \yk)$ holds for any $k\geq 0$ and almost every $\omega_{c,k} \in \Omega_c$.
		
		As a result, together with the fact that $\{\xk\}$ is uniformly bounded almost surely, we can conclude that $\{\frac{1}{\eta_k}(\zeta_{\xkp, \omega_{c,k}} - \zeta_{\xk, \omega_{c,k}})\}$ is a uniformly bounded martingale difference sequence. 
		Then the update schemes for $\{u_k\}$ can be rewritten as,
		\begin{equation}
			u_{k+1}  = (1- \tilde{\tau}\eta_k)u_k + \eta_k \left( \tilde{\tau} \zeta_{\xk, {\omega}_{c,k}} +  \frac{\zeta_{\xkp, \omega_{c,k}} - \zeta_{\xk, \omega_{c,k}}}{\eta_k}\right).  
		\end{equation}
		Let $\hat{\zeta}_k =  \tilde{\tau}\zeta_{\xk, {\omega}_{c,k}} +  \frac{\zeta_{\xkp, \omega_{c,k}} - \zeta_{\xk, \omega_{c,k}}}{\eta_k}$, it is easy to verify that $\{\hat{\zeta}_k\}$ is a sequence of uniformly bounded martingale difference sequence. 
		
		Furthermore, it holds from the uniform boundedness of $\{\hat{\zeta}_k\}$ that $\{u_k\}$ is uniformly bounded almost surely. Then it follows \cite{benaim2005stochastic} that the interpolated process of $\{u_k\}$ is a perturbed solution to the ODE $\frac{\mathrm{d}u}{\mathrm{d}t} = - \tilde{\tau}u$. Therefore, we can conclude from \cite[Proposition 3.33]{benaim2005stochastic} that any cluster point of $\{u_k\}$ lies in $\{0\}$. Then almost surely, $\lim_{k\to +\infty} u_k = 0$. This completes the proof.

	  \end{proof}
	
	Lemma \ref{Le_expectation_wk} shows that with Assumption \ref{Assumption_framework}(1)(3)(4), the sequence $\{w_k\}$ in \eqref{Eq_Framework_EC} satisfies Assumption \ref{Assumption_framework}(2) almost surely. Then based on Theorem \ref{Theo_convergence_Dg}, we have the following theorem establishing the convergence properties of \eqref{Eq_Framework_EC}. The proof of theorem \eqref{Theo_convergence_Dg_expectation} directly follows from Lemma \ref{Le_expectation_wk}, Theorem \ref{Theo_convergence_Dg} and Corollary \ref{Coro_convergence_DfDc_exactness}. Hence it is omitted for simplicity. 
	\begin{theo}
		\label{Theo_convergence_Dg_expectation}
		For any given sequence of set-valued mappings $\{\Phi_k\}$ that represents an embeddable stochastic subgradient method, suppose Assumption \ref{Assumption_f_expecation_constraints} holds, and the framework \eqref{Eq_Framework_EC} satisfies the following conditions.
		\begin{enumerate}
			\item The sequence of iterates $\{\xk\}$ is uniformly bounded almost surely.
			\item The sequence $\{\eta_k\}$ is positive and satisfies $\sum_{k = 0}^{+\infty} \eta_k = +\infty$ and $\lim_{k\to +\infty} |\eta_k \log(k)| = 0$. 
			\item There exists $0 < \theta_{\min} \leq \theta_{\max} < \beta$ such that  $\theta_{\min} \leq \theta_k \leq \theta_{\max}$ holds for all $k\geq 0$. 
		\end{enumerate}
		Then for any sequence $\{\xk\}$ generated by \eqref{Eq_Framework_EC}, almost surely we have that each of its cluster points lies in $\left\{x \in \X: 0 \in \D_g(x) + \ca{N}_{\X}(x) \right\}$ and the sequence $\{g(\xk)\}$ converges.  
		
		Moreover, when we further assume the validity of Condition \ref{Cond_regularity_condition},  and $f$ is $M_f$-Lipschitz continuous and $\beta > \frac{M_f}{\nu}$, then any cluster point of $\{\xk\}$ is a $(\D_f, \D_c)$-KKT point of \eqref{Prob_Ori} and the sequence $\{f(\xk)\}$ converges. 
	\end{theo}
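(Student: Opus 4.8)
The plan is to treat \eqref{Eq_Framework_EC} as a concrete instance of the abstract framework \eqref{Eq_Framework} and to verify, almost surely, every hypothesis of Theorem \ref{Theo_convergence_Dg} (and, for the second assertion, of Corollary \ref{Coro_convergence_DfDc_exactness}). The first reduction is immediate: Lemma \ref{Le_formulation_DfDc} shows that Assumption \ref{Assumption_f_expecation_constraints} implies Assumption \ref{Assumption_f}, since $f$ and $c$ are then path-differentiable with convex-valued conservative field $\D_f(x)=\conv(\bb{E}_{\omega_f}[\D_F(x,\omega_f)])$ and conservative Jacobian $\D_c(x)=\conv(\bb{E}_{\omega_c}[\D_C(x,\omega_c)])$, while Assumption \ref{Assumption_f_expecation_constraints}(4) gives $\inf_{\X}f>-\infty$. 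It therefore remains to recast the stochastic quantities $g_k,\hat{J}_{c,k},w_k,\lambda_k$ of \eqref{Eq_Framework_EC} into the exact-plus-noise form demanded by Assumption \ref{Assumption_framework}.

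The core step is a martingale decomposition of $l_{x,k}$. I would set $d_k:=\bb{E}[g_k\mid\ca{F}_k]$ and $J_{c,k}:=\bb{E}[\hat{J}_{c,k}\mid\ca{F}_k]$ for the natural filtration $\{\ca{F}_k\}$. Because $g_k$ and $\hat{J}_{c,k}$ are measurable selections of $\D_F(x_k,\cdot)$ and $\D_C(x_k,\cdot)$ drawn independently of $\ca{F}_k$, the Aumann-integral definition (Definition \ref{Defin_Aumann_integral}) together with Lemma \ref{Le_formulation_DfDc} yields $d_k\in\bb{E}_{\omega_f}[\D_F(x_k,\omega_f)]\subseteq\D_f(x_k)$ and $J_{c,k}\in\D_c(x_k)$; integrability of the selections follows from the uniform bound $M_{\ca Y}$ of Assumption \ref{Assumption_f_expecation_constraints}(2) and the a.s. boundedness of $\{x_k\}$. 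Writing $l_{x,k}=d_k+J_{c,k}(\lambda_k+\rho w_k)+\xi_{k+1}$ with $\xi_{k+1}:=(g_k-d_k)+(\hat{J}_{c,k}-J_{c,k})(\lambda_k+\rho w_k)$, the sequence $\{\xi_k\}$ is a martingale difference sequence since $\lambda_k$ and $w_k$ are $\ca{F}_k$-measurable. To bound $\xi_{k+1}$ I would first establish that $\{\lambda_k\}$ is uniformly bounded: rewriting the dual step as $\lambda_{k+1}=(1-\theta_k/\beta)\lambda_k+(\theta_k/\beta)\,\beta\,\mathrm{regu}(w_{k+1})$ exhibits $\lambda_{k+1}$ as a convex combination of $\lambda_k$ and a point of $\bb{B}_\beta(0)$ (using $0<\theta_k\le\theta_{\max}<\beta$ and $\norm{\mathrm{regu}(\cdot)}\le1$), so $\sup_k\norm{\lambda_k}\le\max\{\norm{\lambda_0},\beta\}$. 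Combining this with hypothesis 1, the boundedness of $\{w_k\}$ obtained in the proof of Lemma \ref{Le_expectation_wk}, and the bound $M_{\ca Y}$, the noises $\{\xi_k\}$ are uniformly bounded a.s.; since $|\eta_k\log k|\to0$ by hypothesis 2, the summation condition $\lim_i\sup_{i\le j\le\Lambda(\lambda_i+T)}\norm{\sum_{k=i}^{j}\eta_k\xi_{k+1}}=0$ holds a.s., as recalled after Condition \ref{Cond_stable_alg}. This verifies Assumption \ref{Assumption_framework}(3).

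For Assumption \ref{Assumption_framework}(2) I would invoke Lemma \ref{Le_expectation_wk}, whose hypotheses are exactly those assumed here, to conclude $\norm{w_k-c(x_k)}\to0$ almost surely; setting $\delta_k:=\norm{w_k-c(x_k)}$ gives $\delta_k\to0$, and since $d_k\in\D_f(x_k)=\D_f^0(x_k)\subseteq\D_f^{\delta_k}(x_k)$ and likewise $J_{c,k}\in\D_c^{\delta_k}(x_k)$, all three required inclusions/bounds hold. Assumption \ref{Assumption_framework}(1) is hypothesis 1, with boundedness of the auxiliary variables $\{y_k\}$ following from the uniform boundedness of $\{l_{x,k}\}$ (a consequence of the local boundedness of $\D_f,\D_c$ and the bounds just established) and the update $\Phi_k$, as is standard for embeddable methods; Assumption \ref{Assumption_framework}(4) is hypothesis 3; and the weak Morse--Sard property (Assumption \ref{Assumption_weak_sard}) follows from the definability of $f,\D_f,c,\D_c$ as discussed in the remark after Assumption \ref{Assumption_weak_sard}. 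With all hypotheses in force, Theorem \ref{Theo_convergence_Dg} yields that almost surely every cluster point of $\{x_k\}$ lies in $\{x\in\X:0\in\D_g(x)+\NX(x)\}$ and $\{g(x_k)\}$ converges. For the second assertion, Corollary \ref{Coro_convergence_DfDc_exactness} applies once Condition \ref{Cond_regularity_condition}, $M_f$-Lipschitzness of $f$, and $\beta>M_f/\nu$ are assumed: by Proposition \ref{Prop_exactness_g} every such cluster point satisfies $c(x)=0$, hence is a $(\D_f,\D_c)$-KKT point; moreover, since $\{x_k\}$ is bounded and all its subsequential limits are feasible, $\norm{c(x_k)}\to0$, so $f(x_k)=g(x_k)-\beta\norm{c(x_k)}-\tfrac{\rho}{2}\norm{c(x_k)}^2$ converges.

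The step I expect to be the main obstacle is the martingale decomposition of the second paragraph. Because the Jacobian sampling error $(\hat{J}_{c,k}-J_{c,k})$ is multiplied by the dual iterate $\lambda_k$, the noise $\xi_{k+1}$ couples the primal sampling error with the dual variable, so its uniform boundedness is not automatic and genuinely rests on the a priori contraction bound $\sup_k\norm{\lambda_k}\le\max\{\norm{\lambda_0},\beta\}$. Correctly identifying $d_k$ and $J_{c,k}$ as conservative-field-valued conditional means through the Aumann integral, and checking that multiplying the Jacobian error by the $\ca{F}_k$-measurable factor $(\lambda_k+\rho w_k)$ preserves the zero conditional mean, is the delicate part of matching \eqref{Eq_Framework_EC} to the abstract framework \eqref{Eq_Framework}; everything downstream is then a direct citation of Lemma \ref{Le_expectation_wk}, Theorem \ref{Theo_convergence_Dg}, and Corollary \ref{Coro_convergence_DfDc_exactness}.
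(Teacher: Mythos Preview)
Your approach is exactly the paper's: the paper states only that the proof ``directly follows from Lemma \ref{Le_expectation_wk}, Theorem \ref{Theo_convergence_Dg} and Corollary \ref{Coro_convergence_DfDc_exactness}'' and omits all details, so your reduction of \eqref{Eq_Framework_EC} to an instance of \eqref{Eq_Framework} via the martingale decomposition of $l_{x,k}$, the contraction bound on $\{\lambda_k\}$, and Lemma \ref{Le_expectation_wk} for the $w_k$-condition is precisely what the paper intends.

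Two cautions, both concerning gaps that are already present in the paper's own statement rather than flaws in your strategy. First, your justification of Assumption \ref{Assumption_weak_sard} by appealing to definability is not supported by the hypotheses: neither Assumption \ref{Assumption_f_expecation_constraints} nor the theorem assumes $f$, $c$, or their conservative fields are definable, so the weak Morse--Sard property should be treated as an implicit standing assumption (Theorem \ref{Theo_convergence_Dg} needs it, but Theorem \ref{Theo_convergence_Dg_expectation} never lists it) rather than something you can derive. Second, Assumption \ref{Assumption_framework}(1) requires boundedness of $\{(x_k,y_k)\}$, whereas the theorem here only assumes $\{x_k\}$ bounded; your claim that boundedness of $\{y_k\}$ ``follows \ldots\ as is standard for embeddable methods'' is not actually guaranteed by Definition \ref{Defin_stable_stochastic_method} or Assumption \ref{Assumption_Phi}, and the paper simply glosses over this point as well.
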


	\section{Applications}
	In this section, we present illustrative examples of how to embed stochastic subgradient methods into Lagrangian-based methods using the framework \eqref{Eq_Framework}. In particular, we show that the existing proximal variants of SGD \cite{davis2020stochastic}, momentum SGD \cite{ruszczynski2020convergence} are embeddable. Moreover, motivated by \cite{xiao2023adam}, we propose a proximal version of ADAM \cite{kingma2014adam} and prove that it can be embedded into the framework \eqref{Eq_Framework}. These results further reveal that a wide range of stochastic subgradient methods can be embedded into Lagrangian-based methods, with convergence guarantees directly inherited from existing works.

	\subsection{Embedding proximal SGD into Lagrangian-based method}
	In this subsection, we consider the proximal SGD method analyzed in \cite{davis2020stochastic} for solving \eqref{Prob_simple_min_X}, which iterates by the following update scheme,
	\begin{equation}
		\label{Eq_example_SGD}
		\xkp = \prox_{\X}(\xk - \eta_k g_k).
	\end{equation}
	Here $g_k \in \D_h^{\delta_k}(\xk) + \xi_{k+1}$ estimates a stochastic subgradient of $h$ at $\xk$.  For any $k\geq 0$, let the set-valued mapping $\Phi_{G,k}: \Rn \times \X \times \bb{R} \rightrightarrows \X$ be defined as, 
	\begin{equation}
		\Phi_{G,k}(g, x, \eta) = \{\prox_{\X}(x - \eta g) \},
		\label{eq-PhiG}
	\end{equation}
	then the update scheme of \eqref{Eq_example_SGD} can be expressed as 
	\begin{equation}
		\xkp \in \Phi_{G,k}(g_k, \xk, \eta_k) \subseteq  \prox_{\X}\left(\xk - \eta_k (\D_h^{\delta_k}(\xk) + \xi_{k+1} )\right). 
	\end{equation}
	Here we treat $x \in \Rn$ as primal variables, and there is no auxiliary variable in \eqref{Eq_example_SGD}.

	Following the results in \cite[Section 6]{davis2020stochastic}, we can conclude that the sequence of mappings $\{\Phi_{G,k}\}$ represents an embeddable stochastic subgradient method.  We summarize these results in the following proposition, where the proof directly follows the same proof techniques as in \cite[Section 6]{davis2020stochastic}. Therefore, we omit the proof of Proposition \ref{Prop_example_stable_SGD} for simplicity. 
	\begin{prop}[\cite{davis2020stochastic}]
		\label{Prop_example_stable_SGD}
		The sequence of set-valued mappings $\{\Phi_{G,k}\}$ in \eqref{eq-PhiG} represents an embeddable stochastic subgradient method.
	\end{prop}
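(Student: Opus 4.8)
The plan is to verify the two defining requirements of an embeddable stochastic subgradient method in Definition \ref{Defin_stable_stochastic_method}: first that Assumption \ref{Assumption_Phi} holds for $\{\Phi_{G,k}\}$, and second that, for any path-differentiable $h$, Condition \ref{Cond_stable_alg} forces every cluster point of $\{\xk\}$ into $\{x\in\X:0\in\D_h(x)+\NX(x)\}$ while $\{h(\xk)\}$ converges. The first requirement is immediate. Since there is no auxiliary variable here, and $\xk\in\X$ gives $\prox_{\X}(\xk)=\xk$, the nonexpansiveness of $\prox_{\X}$ yields $\norm{\prox_{\X}(\xk-\etak g)-\xk}=\norm{\prox_{\X}(\xk-\etak g)-\prox_{\X}(\xk)}\leq\etak\norm{g}$, so taking $T_{\Phi}(g,x):=\norm{g}$ (continuous, hence locally bounded) verifies \eqref{Eq_condition_Phi}.

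For the convergence part, I would cast \eqref{Eq_example_SGD} into the stochastic-approximation form \eqref{Eq_def_Iter} attached to the differential inclusion $\dot{x}\in-(\D_h(x)+\NX(x))$. The optimality condition for the projection produces a vector $v_k\in\NX(\xkp)$ with $\xkp=\xk-\etak(g_k+v_k)$; rearranging $v_k=\etak^{-1}(\xk-\xkp)-g_k$ and using $\norm{\xkp-\xk}\leq\etak\norm{g_k}$ gives the uniform bound $\norm{v_k}\leq 2\norm{g_k}$, finite by Condition \ref{Cond_stable_alg}(2)--(3). Writing the search direction as $d_k:=-(g_k-\xi_{k+1})-v_k$ and the noise as $-\xi_{k+1}$, Condition \ref{Cond_stable_alg}(3) gives $g_k-\xi_{k+1}\in\D_h^{\delta_k}(\xk)$, while $v_k\in\NX(\xkp)$ together with $\norm{\xkp-\xk}\to 0$ and the closed graph of $\NX$ places $v_k$ in a $\delta_k'$-expansion of $\NX$ at $\xk$ with $\delta_k'\to 0$. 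Hence $d_k$ lies in a vanishing expansion of $-(\D_h+\NX)$, and, the direction being uniformly bounded, Lemma \ref{Le_interpolated_process}---after restricting the set-valued map to a large ball so that it is compact convex-valued, which is harmless since $\{d_k\}$ is bounded---shows that the interpolated process of $\{\xk\}$ is a perturbed solution of $\dot{x}\in-(\D_h(x)+\NX(x))$.

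It then remains to exhibit a Lyapunov function, for which I would take $\Psi=h$ with stable set $\ca{B}=\{x\in\X:0\in\D_h(x)+\NX(x)\}$. Every trajectory $x(t)$ stays in $\X$, and the chain rule for the conservative field $\D_h+\NX$ of $h+\iota_{\X}$ gives $\tfrac{d}{dt}h(x(t))=\inner{w,\dot{x}(t)}$ for all $w\in\D_h(x(t))+\NX(x(t))$; choosing $w=-\dot{x}(t)$ (which lies in this set by construction of the dynamics) yields $\tfrac{d}{dt}h(x(t))=-\norm{\dot{x}(t)}^2\leq 0$, with strict decrease away from $\ca{B}$ since there $\dot{x}(t)\neq 0$. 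With Condition \ref{Cond_stable_alg}(1) supplying the empty-interior hypothesis on $\{h(x):x\in\ca{B}\}$ and $h$ bounded below along the bounded iterate trajectory, Theorem \ref{The_convergence_beniam} delivers both conclusions.

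The main obstacle I anticipate is the bookkeeping around the normal cone. Since $\NX$ is unbounded-valued, one must justify the compact-convex-valued truncation invoked in Lemma \ref{Le_interpolated_process} using the uniform bound on $v_k$, and carefully pass from $\NX(\xkp)$ to a controlled expansion at $\xk$ via graph-closedness. The companion delicate point is the conservative-field calculus for the nonsmooth indicator $\iota_{\X}$ underlying the Lyapunov descent $\tfrac{d}{dt}h(x(t))=-\norm{\dot{x}(t)}^2$; rigorously, this rests on the orthogonality $\inner{n,\dot{x}}=0$ enforced by the projection structure (equivalently, $\iota_{\X}$ is constant along trajectories in $\X$), which is precisely the machinery supplied by the analysis of \cite{davis2020stochastic}.
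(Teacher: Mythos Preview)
Your proposal is correct and follows essentially the same approach as the paper: the paper omits the proof entirely and points to \cite[Section 6]{davis2020stochastic}, and what you have written is precisely the skeleton of that argument---recasting \eqref{Eq_example_SGD} as a stochastic approximation for the differential inclusion $\dot{x}\in-(\D_h(x)+\NX(x))$ via the projection's optimality condition, then invoking $h$ as a Lyapunov function and applying Theorem \ref{The_convergence_beniam}. The two technical points you flag (truncating the unbounded normal cone to meet the hypotheses of Lemma \ref{Le_interpolated_process}, and the almost-everywhere orthogonality $\inner{n,\dot{x}(t)}=0$ for $n\in\NX(x(t))$ needed for the descent identity) are exactly the places where the details live in \cite{davis2020stochastic}, so your deferral there is appropriate.
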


	Therefore, we arrive at the following Lagrangian-based method, by embedding the proximal SGD in \eqref{Eq_example_SGD} through the framework \eqref{Eq_Framework}, 
	\begin{equation}
		\label{Eq_example_SGD_LALM}
		\tag{SGD-LALM}
		\left\{
		\begin{aligned}
			& l_{x,k} = d_k + J_{c,k} (\lambda_k + \rho \wk) + \xi_{k+1},\\
			& \text{\fcolorbox{white}{gray!20}{$\xkp = \mathrm{prox}_{\X}(\xk - \eta_k l_{x,k}) $,}}\\
            & \text{Compute $\wkp$ as an approximated evaluation to $c(\xkp)$},\\
			& \lambda_{k+1} = \lambda_k + \theta_k \left( \mathrm{regu}(\wkp) - \frac{1}{\beta}\lambda_k  \right).
		\end{aligned}
		\right.
	\end{equation}
	Here the background of the second step in \eqref{Eq_example_SGD_LALM} is shaded in grey to highlight the embedding of the proximal SGD exhibited in  \eqref{Eq_example_SGD} into the framework \eqref{Eq_Framework}.

	Based on the results from Theorem \ref{Theo_convergence_Dg} and Corollary \ref{Coro_convergence_DfDc_exactness}, we have the following theorem illustrating the global convergence of \eqref{Eq_example_SGD_LALM}. Theorem \ref{Prop_example_SGD_convergence_Dg} is a direct corollary of Theorem \ref{Theo_convergence_Dg} and Corollary \ref{Coro_convergence_DfDc_exactness}, hence its proof is omitted for simplicity. 
	\begin{theo}
		\label{Prop_example_SGD_convergence_Dg}
		Suppose Assumption \ref{Assumption_f}, Assumption \ref{Assumption_weak_sard}, and Assumption \ref{Assumption_framework} hold. Then for any sequence $\{\xk\}$ generated by \eqref{Eq_example_SGD_LALM}, any cluster point of $\{\xk\}$ lies in $\{ x \in \X: 0 \in \D_g(x) + \NX(x)\}$, and the sequence $\{g(\xk)\}$ converges. 
		
		Moreover, when we further assume Condition \ref{Cond_regularity_condition}  holds, $f$ is $M_f$-Lipschitz continuous and $\beta > \frac{M_f}{\nu}$ in \eqref{Eq_example_SGD_LALM}, then any cluster point of $\{\xk\}$ is a $(\D_f, \D_c)$-KKT point of \eqref{Prob_Ori}, and the sequence $\{f(\xk)\}$ converges. 
	\end{theo}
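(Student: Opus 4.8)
The plan is to observe that \eqref{Eq_example_SGD_LALM} is nothing but the general framework \eqref{Eq_Framework} specialized to the case where the embedded subgradient update is the proximal SGD step, and then to invoke the already-established convergence results verbatim. First I would make the identification explicit: the primal update $\xkp = \prox_{\X}(\xk - \eta_k l_{x,k})$ in \eqref{Eq_example_SGD_LALM} coincides with applying the set-valued mapping $\Phi_{G,k}$ defined in \eqref{eq-PhiG}, so that $\xkp \in \Phi_{G,k}(l_{x,k}, \xk, \eta_k)$. Since proximal SGD carries no auxiliary variable, the $\yk$-component of \eqref{Eq_Intro_subgradient_method} is vacuous (formally $q = 0$), and the remaining three lines of \eqref{Eq_example_SGD_LALM}---the construction of $l_{x,k}$, the update of $\wkp$, and the dual ascent---are identical to those of \eqref{Eq_Framework}. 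Thus \eqref{Eq_example_SGD_LALM} is exactly an instance of \eqref{Eq_Framework} with $\Phi_k = \Phi_{G,k}$.

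Once this identification is in place, the first conclusion follows directly from Theorem \ref{Theo_convergence_Dg}. Its hypotheses require Assumption \ref{Assumption_f}, Assumption \ref{Assumption_weak_sard}, and Assumption \ref{Assumption_framework} (all assumed here) together with the statement that $\{\Phi_k\}$ represents an embeddable stochastic subgradient method; the latter is supplied by Proposition \ref{Prop_example_stable_SGD}, which certifies that $\{\Phi_{G,k}\}$ is embeddable. Applying Theorem \ref{Theo_convergence_Dg} therefore yields that every cluster point of $\{\xk\}$ lies in $\{x \in \X: 0 \in \D_g(x) + \NX(x)\}$ and that $\{g(\xk)\}$ converges.

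For the second (exactness) conclusion I would likewise appeal to Corollary \ref{Coro_convergence_DfDc_exactness}, whose hypotheses add precisely Condition \ref{Cond_regularity_condition}, the $M_f$-Lipschitz continuity of $f$, and $\beta > M_f/\nu$ on top of those already verified. Under these, Proposition \ref{Prop_exactness_g} forces $c(x) = 0$ at every point satisfying $0 \in \D_g(x) + \NX(x)$, and Proposition \ref{Prop_relation_DfDc_Dg} then upgrades such a feasible stationary point to a $(\D_f, \D_c)$-KKT point of \eqref{Prob_Ori}; convergence of $\{f(\xk)\}$ follows from the convergence of $\{g(\xk)\}$ together with feasibility. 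Since both conclusions are immediate specializations of the general theorems, the only step requiring genuine verification is the structural identification in the first paragraph, and I expect no substantive obstacle beyond confirming that the proximal SGD mapping $\Phi_{G,k}$ satisfies Assumption \ref{Assumption_Phi}---which is exactly the content already packaged into Proposition \ref{Prop_example_stable_SGD}.
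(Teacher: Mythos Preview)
Your proposal is correct and matches the paper's approach exactly: the paper states that Theorem \ref{Prop_example_SGD_convergence_Dg} is a direct corollary of Theorem \ref{Theo_convergence_Dg} and Corollary \ref{Coro_convergence_DfDc_exactness} (via Proposition \ref{Prop_example_stable_SGD} for embeddability of $\{\Phi_{G,k}\}$) and omits the proof entirely. You have simply spelled out the structural identification that the paper leaves implicit.
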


	
	\subsection{Embedding proximal SGDM into Lagrangian-based methods}
	
	Proximal SGD with momentum (SGDM) \cite{ruszczynski2020convergence} is a popular variant of the proximal SGD method, which introduces the heavy-ball momentum term  \cite{polyak1964some} to track the first-order moment of the stochastic subgradient of $f$. When applied to solve \eqref{Prob_simple_min_X}, \cite{ruszczynski2020convergence} introduces a proximal SGDM, which enjoys convergence guarantees on nonsmooth functions based on the ODE approaches. 
	
	In this subsection, we present an example to illustrate how to embed the proximal SGDM into Lagrangian-based methods, and establish its convergence properties directly from the results in   \cite{ruszczynski2020convergence}.  Following the settings in \cite{ruszczynski2020convergence}, we consider the proximal SGDM that solves \eqref{Prob_simple_min_X} by the following schemes,
	\begin{equation}
		\label{Eq_example_SGDM_ST}
		\left\{
		\begin{aligned}
			&\ykp = \yk - \tau \eta_k (\yk - g_k),\\
			&\xkp = (1-\eta_k) \xk +  \eta_k \mathrm{prox}_{\X}(\xk - \alpha \ykp). 
		\end{aligned}
		\right.
	\end{equation}
	Here $g_k \in \D_h^{\delta_k}(\xk) + \xi_{k+1}$ is a stochastic subgradient of $h$ at $\xk$. Moreover,  $\tau> 0$  is the parameter for the momentum term in \eqref{Eq_example_SGDM_ST}, and $\alpha>0$ is a prefixed constant.   The corresponding mapping $\Phi_{S, k}: \X\times \mathbb{R}^n\times \mathbb{R} \rightrightarrows \X \times \mathbb{R}^n$ can be expressed as 
	\begin{equation}
		\Phi_{S,k}(g, x, y,\eta) = \left\{
		\left[
		\begin{matrix}
			(1-\eta) x +  \eta \mathrm{prox}_{\X}(x - \alpha (y - \tau \eta (y-g)))\\
			y - \tau \eta (y-g) 
		\end{matrix}
		\right]\right\}.
	\end{equation}
	Here we treat $x \in \Rn$ as primal variables and $y \in \Rn$ as the auxiliary variables.
	Therefore, with the mappings $\{\Phi_{S, k}\}$, the update scheme \eqref{Eq_example_SGDM_ST} can be reformulated as 
	\begin{equation}
		\label{Eq_example_SGDM_abstract_scheme}
		(\xkp, \ykp) \in \Phi_{S,k}(\D_h^{\delta_k}(\xk) + \xi_{k+1}, \xk, \yk, \eta_k). 
	\end{equation}
	Then let the set-valued mapping $\ca{H}_S:  \X \times \Rn \rightrightarrows \Rn \times \Rn $  be defined as 
	\begin{equation}
		\ca{H}_{S}(x, y) = 
		\left[
		\begin{matrix}
			\mathrm{prox}_{\X}(x - \alpha y) - x\\[3pt]
			\tau (\D_h(x)-y)\\
		\end{matrix}
		\right],
	\end{equation}
	and consider the following differential inclusion
	\begin{equation}
		\label{Eq_example_SGDM_DI}
		\begin{aligned}
			\left( \frac{\mathrm{d}x}{\mathrm{d}t}, \frac{\mathrm{d}y}{\mathrm{d}t} \right) \in \ca{H}_{S}(x, y).			
		\end{aligned}
	\end{equation}
	In \cite[Theorem 4.2]{ruszczynski2020convergence}, the author proves that under mild conditions, the differential inclusion \eqref{Eq_example_SGDM_DI} admits the Lyapunov function $\Psi_{S}(x, y) = h(x) - \frac{1}{\tau}u_{S}(x, y)$ with the stable set $\{(x, y) \in \Rn \times \Rn: 0 \in \D_h(x) + \ca{N}_{\X}(x), u_{S}(x, y) = 0\}$. Here $u_S: \X \times \Rn \to \bb{R}$ is an auxiliary function defined by 
	\begin{equation}
		u_{S}(x, y) := \min_{w \in \X} \inner{w-x, y} + \frac{\alpha}{2} \norm{w-x}^2. 
	\end{equation}
	In the following proposition, we summarize these results from \cite{ruszczynski2020convergence}.
	\begin{prop}[Theorem 4.2 in \cite{ruszczynski2020convergence}]
		\label{Prop_example_SGDM_perturbed_path}
		For any locally Lipschitz path-differentiable function $h$ that admits $\D_h$ as its convex-valued conservative field, the function $\Psi_S(x, y)$ is a Lyapunov function for the differential inclusion \eqref{Eq_example_SGDM_DI} 
		with the stable set $\{(x, y) \in \X \times \Rn: 
		0 \in \D_h(x) + \ca{N}_{\X}(x), u_{S}(x, y) = 0\}$. 
	\end{prop}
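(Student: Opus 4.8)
The plan is to verify directly the two requirements of Definition \ref{Defin_Lyapunov_function} by differentiating $\Psi_S$ along a trajectory $(x(t),y(t))$ of \eqref{Eq_example_SGDM_DI}, using three ingredients: the smoothness of the value function $u_S$, the path-differentiability of $h$, and the variational characterization of the proximal map. First I would record the analytic properties of $u_S$. Because the inner objective $\inner{w-x,y}+\tfrac{\alpha}{2}\norm{w-x}^2$ is strongly convex in $w$, its minimizer $w^\star(x,y)$ is unique, so $u_S$ is continuously differentiable, $\Psi_S$ is continuous, and Danskin's theorem gives $\nabla_y u_S(x,y)=w^\star(x,y)-x$ together with an expression for $\nabla_x u_S(x,y)$ that is affine in $y$ and in $w^\star(x,y)-x$. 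With the quadratic coefficient matched to the stepsize so that $w^\star(x,y)=\prox_{\X}(x-\alpha y)$, the first block of $\ca{H}_S$ is precisely $\nabla_y u_S$, i.e. $\dot x=\nabla_y u_S(x,y)$ holds along every trajectory.

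Next I would prove the monotonicity in Definition \ref{Defin_Lyapunov_function}(1). Along the trajectory the $y$-inclusion yields a measurable selection $v(t):=y(t)+\tfrac1\tau\dot y(t)\in\D_h(x(t))$, and the conservative-field property of $\D_h$ gives $\tfrac{\mathrm d}{\mathrm dt}h(x(t))=\inner{v(t),\dot x(t)}$ for almost every $t$. Since $u_S\in C^1$, the chain rule gives $\tfrac{\mathrm d}{\mathrm dt}u_S=\inner{\nabla_x u_S,\dot x}+\inner{\nabla_y u_S,\dot y}$. Substituting both expressions into $\tfrac{\mathrm d}{\mathrm dt}\Psi_S=\tfrac{\mathrm d}{\mathrm dt}h(x)-\tfrac1\tau\tfrac{\mathrm d}{\mathrm dt}u_S$ and using $\nabla_y u_S=\dot x$ cancels the two $\inner{\dot x,\dot y}$ cross terms, collapsing the expression to a combination of $\inner{y,\dot x}$ and $\norm{\dot x}^2$. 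The optimality condition of $\prox_{\X}(x-\alpha y)$, namely $\inner{y,\dot x}\le-\tfrac1\alpha\norm{\dot x}^2$, then produces the clean bound $\tfrac{\mathrm d}{\mathrm dt}\Psi_S\le-\tfrac1\alpha\norm{\dot x}^2\le0$, which after integration gives $\Psi_S(x(t),y(t))\le\Psi_S(x(0),y(0))$ for all $t\ge0$.

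Then I would establish the strict-decrease property in Definition \ref{Defin_Lyapunov_function}(2). The integrated bound shows that strict decrease over $[0,T]$ fails only if $\dot x\equiv0$ on $[0,T]$. So suppose $(x(0),y(0))\notin\ca{B}$ and, towards a contradiction, $\dot x\equiv0$ on $[0,T]$. Then $x\equiv x_0$ and, from the optimality of the proximal step, $-y(t)\in\NX(x_0)$ for all $t$, equivalently $u_S(x_0,y(t))=0$; hence the failure to lie in $\ca{B}$ forces $0\notin\D_h(x_0)+\NX(x_0)$, i.e. $\D_h(x_0)\cap(-\NX(x_0))=\varnothing$. Since $\D_h(x_0)$ is compact convex (a convex-valued, locally bounded, graph-closed field) and $-\NX(x_0)$ is a closed convex cone, strict separation yields a vector $a$ with $\inner{a,\cdot}\le0$ on $-\NX(x_0)$ and $\inner{a,v}\ge\delta>0$ for all $v\in\D_h(x_0)$. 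Along the $y$-flow $\dot y=\tau(v-y)$ this gives $\tfrac{\mathrm d}{\mathrm dt}\inner{a,y}=\tau\inner{a,v-y}\ge\tau\delta>0$, so $\inner{a,y(t)}\to+\infty$, contradicting $\inner{a,y(t)}\le0$. Therefore $\dot x\not\equiv0$, and combined with the monotonicity (so that $\sup_{t\in[0,T]}\Psi_S=\Psi_S(x(0),y(0))$) we obtain $\Psi_S(x(T),y(T))<\Psi_S(x(0),y(0))$, completing the verification.

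I expect the main obstacle to be this last step: ruling out the degenerate regime in which $x$ is momentarily fixed at a point that is not $\D_h$-stationary while $y$ has not yet aligned with $\D_h(x_0)$. Here neither the $\norm{\dot x}^2$ term nor a pointwise argument suffices, and one must use the viability/separation argument above to show that the contracting $y$-dynamics cannot remain confined to $-\NX(x_0)$. A secondary technical point is the careful handling of the almost-everywhere identities and the measurable selection $v(t)$ needed to combine the nonsmooth chain rule for $h$ with the $C^1$ chain rule for $u_S$.
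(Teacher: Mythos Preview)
The paper does not supply its own proof of this proposition: it is quoted as \cite[Theorem~4.2]{ruszczynski2020convergence} and used without argument. Your proposal is therefore not directly comparable to anything in the paper, but it is essentially correct. The closest analogue is the paper's proof of Proposition~\ref{Prop_example_ADAM_perturbed_path} for the ADAM inclusion, and your monotonicity step (Danskin for $u_S$, the conservative-field chain rule for $h$, cancellation of the $\inner{v,\dot x}$ terms via $\dot y=\tau(v-y)$, and the variational inequality $\inner{y,\dot x}\le-\tfrac1\alpha\norm{\dot x}^2$) parallels that argument closely.

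Where you diverge is the strict-decrease step. The paper's ADAM argument, specialized here, would observe that if $\dot x\equiv 0$ then $x\equiv x_0$, $-y(s)\in\NX(x_0)$ for all $s$, and the contracting inclusion $\dot y\in\tau(\D_h(x_0)-y)$ drives $\mathrm{dist}(y(s),\D_h(x_0))\to 0$; closedness then forces $0\in\D_h(x_0)+\NX(x_0)$. You instead separate the disjoint convex sets $\D_h(x_0)$ and $-\NX(x_0)$ by a hyperplane and show $\inner{a,y(t)}$ grows linearly, contradicting $\inner{a,y(t)}\le 0$. Both routes work; yours avoids the attractor argument for the $y$-flow, while the contraction route is the one typically seen in this literature.

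Two small points. First, your contradiction needs $\dot x\equiv 0$ on $[0,\infty)$, not on a fixed $[0,T]$: the negation of Definition~\ref{Defin_Lyapunov_function}(2), combined with your integral bound, gives $\int_0^\infty\norm{\dot x}^2=0$, and only then does $\inner{a,y(t)}\to+\infty$ produce a contradiction. Second, the paper's $u_S$ carries the coefficient $\tfrac{\alpha}{2}$, whose minimizer is $\prox_\X(x-y/\alpha)$ rather than $\prox_\X(x-\alpha y)$; your phrase ``with the quadratic coefficient matched to the stepsize'' rightly signals that one must read the coefficient as $\tfrac{1}{2\alpha}$ (as in the paper's $u_A$) for the identity $\dot x=\nabla_y u_S$ to hold.
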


	Then based on existing ODE approaches \cite{benaim2005stochastic,davis2020stochastic,xiao2023convergence}, we have the following proposition showing that \eqref{Eq_example_SGDM_ST} is an embeddable algorithm represented by the sequence of set-valued mappings $\{\Phi_{S, k}\}$. The proof of the following proposition follows almost the same techniques as \cite[Theorem 4.2]{ruszczynski2020convergence}. However, it is worth mentioning that the results in \cite{ruszczynski2020convergence} impose stronger conditions on the noises in \eqref{Eq_example_SGDM_ST}, hence we include the proof of Proposition \ref{Prop_example_SGDM_stable_method} in the following. 
	\begin{prop}
		\label{Prop_example_SGDM_stable_method}
		The sequence of mappings $\{\Phi_{S,k}\}$ represents an embeddable stochastic subgradient method. 
	\end{prop}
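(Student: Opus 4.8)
The plan is to verify directly the two requirements in Definition~\ref{Defin_stable_stochastic_method}: that $\{\Phi_{S,k}\}$ obeys Assumption~\ref{Assumption_Phi}, and that Condition~\ref{Cond_stable_alg} forces every cluster point of $\{\xk\}$ into $\{x\in\X: 0\in\D_h(x)+\NX(x)\}$ while making $\{h(\xk)\}$ convergent. For Assumption~\ref{Assumption_Phi}, I would read the two increments off \eqref{Eq_example_SGDM_ST}. Since $\ykp-\yk=\eta_k\tau(g-\yk)$, we get $\norm{\ykp-\yk}\le \eta_k\tau(\norm{g}+\norm{y})$; and because $\prox_{\X}$ is nonexpansive and fixes $\xk\in\X$, the primal increment $\xkp-\xk=\eta_k(\prox_{\X}(\xk-\alpha\ykp)-\xk)$ satisfies $\norm{\prox_{\X}(\xk-\alpha\ykp)-\xk}\le \alpha\norm{\ykp}$. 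Combining these gives a locally bounded $T_{\Phi}(g,x,y)$ with $\mathrm{dist}(\Phi_{S,k}(g,x,y,\eta),(x,y))\le \eta\,T_{\Phi}(g,x,y)$; this is the routine part.

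The core is to show, under Condition~\ref{Cond_stable_alg}, that the interpolated process of $\{(\xk,\yk)\}$ is a perturbed solution of the differential inclusion \eqref{Eq_example_SGDM_DI} driven by $\ca{H}_{S}$. I would recast \eqref{Eq_example_SGDM_abstract_scheme} in the additive form \eqref{Eq_def_Iter}, $(\xkp,\ykp)=(\xk,\yk)+\eta_k(D_k+\Xi_{k+1})$. Writing $g_k=\tilde g_k+\xi_{k+1}$ with $\tilde g_k\in\D_h^{\delta_k}(\xk)$, the dual increment gives $\tfrac{\ykp-\yk}{\eta_k}=\tau(\tilde g_k-\yk)+\tau\xi_{k+1}$, so its noiseless part lands in the second block $\tau(\D_h(\xk)-\yk)$ of $\ca{H}_{S}$ up to the $\delta_k$-expansion, and the only genuine noise is $\Xi_{k+1}=(0,\tau\xi_{k+1})$.

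The delicate point, which I expect to be the main obstacle, is the primal block: the update uses the \emph{already-refreshed} $\ykp$ rather than $\yk$, so $\tfrac{\xkp-\xk}{\eta_k}=\prox_{\X}(\xk-\alpha\ykp)-\xk$ differs from the target $\prox_{\X}(\xk-\alpha\yk)-\xk$ by at most $\alpha\norm{\ykp-\yk}=\alpha\tau\eta_k\norm{g_k-\yk}$. Because $\{\xk\},\{\yk\}$ are bounded (Condition~\ref{Cond_stable_alg}(2)) and hence $\{g_k\}$ is bounded (local boundedness of $\D_h$ together with $\sup_k\norm{\xi_k}<\infty$), this discrepancy is $O(\eta_k)\to0$ and can be folded into an enlarged expansion radius $\tilde\delta_k=\max\{c\,\delta_k,\,O(\eta_k)\}\to0$; this is exactly the mechanism that lets the primal noise enter only through an $O(\eta_k)$ term, so no separate noise bookkeeping is needed in the first block. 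Thus $D_k\in\ca{H}_{S}^{\tilde\delta_k}(\xk,\yk)$, and since $\ca{H}_{S}$ is nonempty, compact, convex-valued (as $\D_h$ is convex-valued) and graph-closed (continuity of $\prox_{\X}$ and graph-closedness of $\D_h$), I can invoke Lemma~\ref{Le_interpolated_process}: its hypothesis~(1) is Condition~\ref{Cond_stable_alg}(4) up to the factor $\tau$, hypothesis~(2) is the inclusion just established, and hypothesis~(3) is boundedness of the iterates and of $\{D_k\}$. It follows that the interpolated process is a perturbed solution of \eqref{Eq_example_SGDM_DI}.

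Finally I would feed this into Theorem~\ref{The_convergence_beniam} with $\Psi=\Psi_{S}$ and the stable set $\ca{B}=\{(x,y)\in\X\times\Rn: 0\in\D_h(x)+\NX(x),\,u_{S}(x,y)=0\}$ furnished by Proposition~\ref{Prop_example_SGDM_perturbed_path}, which is precisely hypothesis~(2). For hypothesis~(1), on $\ca{B}$ one has $u_{S}=0$, hence $\Psi_{S}=h$ there, so $\{\Psi_{S}(x,y):(x,y)\in\ca{B}\}\subseteq\{h(x):0\in\D_h(x)+\NX(x)\}$ has empty interior by Condition~\ref{Cond_stable_alg}(1), while boundedness below of $\Psi_{S}$ over the bounded region carrying the iterates follows from continuity. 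Theorem~\ref{The_convergence_beniam} then yields that every cluster point of $\{(\xk,\yk)\}$ lies in $\ca{B}$ and that $\{\Psi_{S}(\xk,\yk)\}$ converges. Any cluster point $x^\star$ of $\{\xk\}$ extends, by boundedness of $\{\yk\}$, to a cluster point $(x^\star,y^\star)\in\ca{B}$, giving $0\in\D_h(x^\star)+\NX(x^\star)$; and since $u_{S}=0$ on $\ca{B}$ forces $\Psi_{S}(x^\star,y^\star)=h(x^\star)$ while $\{\Psi_{S}(\xk,\yk)\}$ has a single limit, all cluster points of the bounded sequence $\{h(\xk)\}$ coincide, so $\{h(\xk)\}$ converges. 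This establishes that $\{\Phi_{S,k}\}$ is embeddable.
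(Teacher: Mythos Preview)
Your proposal is correct and follows essentially the same route as the paper: recast \eqref{Eq_example_SGDM_ST} as $(\xkp,\ykp)\in(\xk,\yk)+\eta_k\bigl(\ca{H}_S^{\tilde\delta_k}(\xk,\yk)+(0,\tau\xi_{k+1})\bigr)$ using nonexpansiveness of $\prox_{\X}$, apply Lemma~\ref{Le_interpolated_process} and Theorem~\ref{The_convergence_beniam} with Proposition~\ref{Prop_example_SGDM_perturbed_path}, and then deduce convergence of $\{h(\xk)\}$ from $u_S\to 0$ on the stable set. You spell out Assumption~\ref{Assumption_Phi} and the $\ykp$-versus-$\yk$ discrepancy in the primal block more explicitly than the paper, which dispatches both in a single remark on nonexpansiveness, but the argument is the same.
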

	\begin{proof}
		Under Condition \ref{Cond_stable_alg}, since $\{\eta_k\}$ is diminishing, we can conclude that $\lim_{k\to +\infty} \norm{\xkp - \xk} + \norm{\ykp - \yk} = 0$. Therefore, from the graph-closedness of $\ca{H}_{S}$, we can conclude that there exists a diminishing sequence $\{\tilde{\delta}_k\}$ such that 
		\begin{equation}
			(\xkp, \ykp) \in \bigcup_{d_x \in \D_h^{\delta_k}(\xk) + \xi_{k+1}}\Phi_{S, k}(d_x, \xk, \yk, \eta_k) \subseteq (\xk, \yk) + \eta_k \left(\ca{H}_{S}^{\tilde{\delta}_k}(\xk, \yk) + (0, \tau\xi_{k+1}) \right).
		\end{equation}
		Note that in deriving the above inclusion, we have used the 
		non-expansive property of $\mathrm{prox}_{\X}(\cdot).$
		
		Next, from Condition \ref{Cond_stable_alg}(4),  Lemma \ref{Le_interpolated_process} illustrates that the interpolated process of $\{(\xk, \yk)\}$ is a perturbed solution to the differential inclusion \eqref{Eq_example_SGDM_DI}. Finally, from Theorem \ref{The_convergence_beniam}, Proposition \ref{Prop_example_SGDM_perturbed_path}, and the fact that $\{h(x): 0 \in \D_h(x) + \NX(x), ~x \in \X\}$ has empty interior, we have that any cluster point of $\{(\xk, \yk)\}$ lies in the set $\{(x, y) \in \Rn \times \Rn: 0 \in \D_h(x) + \ca{N}_{\X}(x), u_{S}(x, y) = 0\}$, and the sequence $\{\Psi_{S}(\xk, \yk)\}$ converges. Noticing that  $\lim_{k\to +\infty} u_{S}(\xk, \yk) = 0$, we can conclude that the sequence $\{h(\xk)\}$ converges. 
		
		As a result, from Definition \ref{Defin_stable_stochastic_method}, the proximal SGDM method in \eqref{Eq_example_SGDM_ST} is an embeddable stochastic subgradient method represented by $\{\Phi_{S, k}\}$. This completes the proof. 
	  \end{proof}

	Therefore, by embedding the proximal SGDM described in \eqref{Eq_example_SGDM_ST} through the framework \eqref{Eq_Framework}, we obtain the following Lagrangian-based method named (SGDM-LALM), 
	\begin{equation}
		\label{Eq_example_SGDM_LALM}
		\tag{SGDM-LALM}
		\left\{
		\begin{aligned}
			& l_{x,k} = d_k + J_{c,k} (\lambda_k + \rho \wk) + \xi_{k+1},\\
			& \text{\fcolorbox{white}{gray!20}{$\ykp = \yk - \tau \eta_k (\yk - l_{x,k}) $,}}\\
			& \text{\fcolorbox{white}{gray!20}{$\xkp = (1-\eta_k) \xk +  \eta_k \mathrm{prox}_{\X}(\xk - \alpha \ykp) $,}}\\
            & \text{Compute $\wkp$ as an approximated evaluation to $c(\xkp)$},\\
			& \lambda_{k+1} = \lambda_k + \theta_k \left( \mathrm{regu}(\wkp) - \frac{1}{\beta}\lambda_k  \right).
		\end{aligned}
		\right.
	\end{equation}
	Here the second step and the third step of \eqref{Eq_example_SGD_LALM} are shaded in grey to highlight the embedding of the proximal SGDM exhibited in  \eqref{Eq_example_SGDM_ST} into the framework \eqref{Eq_Framework}. 
	
	Finally,  we present Theorem \ref{Prop_example_SGDM_convergence_Dg} to illustrate the convergence properties of \eqref{Eq_example_SGDM_LALM}. The proof of Theorem \ref{Prop_example_SGDM_convergence_Dg} directly follows from Theorem \ref{Theo_convergence_Dg} and Corollary \ref{Coro_convergence_DfDc_exactness}, hence is omitted for simplicity. 
	\begin{theo}
		\label{Prop_example_SGDM_convergence_Dg}
		Suppose Assumption \ref{Assumption_f}, Assumption \ref{Assumption_weak_sard}, and Assumption \ref{Assumption_framework} hold. Then for any sequence $\{\xk\}$ generated by \eqref{Eq_example_SGDM_LALM}, any cluster point of $\{\xk\}$ lies in $\{ x \in \X: 0 \in \D_g(x) + \NX(x)\}$, and the sequence $\{g(\xk)\}$ converges. 
		
		Moreover, when we further assume that Condition \ref{Cond_regularity_condition}  holds,
		and  $f$ is $M_f$-Lipschitz continuous and $\beta > \frac{M_f}{\nu}$ in \eqref{Eq_example_SGDM_LALM}, then any cluster point of $\{\xk\}$ is a $(\D_f, \D_c)$-KKT point of \eqref{Prob_Ori}, and the sequence $\{f(\xk)\}$ converges. 
	\end{theo}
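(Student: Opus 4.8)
The plan is to recognize \eqref{Eq_example_SGDM_LALM} as the particular instance of the general framework \eqref{Eq_Framework} obtained by taking the embedded subgradient method to be the proximal SGDM, i.e.\ by setting $\Phi_k = \Phi_{S,k}$ for all $k \geq 0$, and then to read off both conclusions from Theorem \ref{Theo_convergence_Dg} and Corollary \ref{Coro_convergence_DfDc_exactness}. The only nontrivial verification is that the two-line primal update in \eqref{Eq_example_SGDM_LALM} coincides with the set-valued map $\Phi_{S,k}$ evaluated at the noisy Lagrangian subgradient $l_{x,k}$.

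First I would substitute $g = l_{x,k}$, $x = \xk$, $y = \yk$, and $\eta = \eta_k$ into the definition of $\Phi_{S,k}$. The second component of $\Phi_{S,k}(l_{x,k}, \xk, \yk, \eta_k)$ is exactly $\yk - \tau \eta_k(\yk - l_{x,k})$, which is the $\ykp$ update in \eqref{Eq_example_SGDM_LALM}, and its first component is $(1-\eta_k)\xk + \eta_k \prox_{\X}(\xk - \alpha \ykp)$, which is the $\xkp$ update. Since the computation of $l_{x,k}$, the update of $\wkp$, and the dual-ascent step for $\lambda_k$ are literally identical in the two schemes, it follows that the sequence $\{\xk\}$ generated by \eqref{Eq_example_SGDM_LALM} is precisely the primal sequence produced by \eqref{Eq_Framework} with $\Phi_k = \Phi_{S,k}$. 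Proposition \ref{Prop_example_SGDM_stable_method} guarantees that $\{\Phi_{S,k}\}$ represents an embeddable stochastic subgradient method, so all the structural hypotheses of Theorem \ref{Theo_convergence_Dg} are in force once Assumption \ref{Assumption_f}, Assumption \ref{Assumption_weak_sard}, and Assumption \ref{Assumption_framework} are assumed.

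Applying Theorem \ref{Theo_convergence_Dg} with this choice of $\{\Phi_k\}$ then yields the first claim directly: every cluster point of $\{\xk\}$ lies in $\{x \in \X : 0 \in \D_g(x) + \NX(x)\}$ and $\{g(\xk)\}$ converges. For the second claim, I would additionally invoke Condition \ref{Cond_regularity_condition}, the $M_f$-Lipschitz continuity of $f$, and the relation $\beta > M_f/\nu$, which are exactly the extra hypotheses of Corollary \ref{Coro_convergence_DfDc_exactness}; that corollary upgrades each cluster point to a $(\D_f, \D_c)$-KKT point of \eqref{Prob_Ori} and delivers convergence of $\{f(\xk)\}$.

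The proof carries no genuine analytic obstacle, because the heavy lifting---the perturbed-solution and Lyapunov argument behind the global convergence---is already packaged inside Theorem \ref{Theo_convergence_Dg}, Corollary \ref{Coro_convergence_DfDc_exactness}, and the embeddability statement Proposition \ref{Prop_example_SGDM_stable_method}. The single point requiring care is the bookkeeping that the $l_{x,k}$ fed into $\Phi_{S,k}$ plays the role of the stochastic subgradient $g$, so that the decomposition $l_{x,k} \in \D_f^{\delta_k}(\xk) + \D_c^{\delta_k}(\xk)(\lambda_k + \rho \wk) + \xi_{k+1}$, on which the convergence theorems rely, is inherited verbatim from Assumption \ref{Assumption_framework}(2)--(3). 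Once this identification is recorded, both conclusions are immediate consequences of the cited results.
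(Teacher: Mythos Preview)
Your proposal is correct and matches the paper's own approach exactly: the paper states that the proof ``directly follows from Theorem \ref{Theo_convergence_Dg} and Corollary \ref{Coro_convergence_DfDc_exactness}, hence is omitted for simplicity,'' and your argument carries out precisely this reduction by identifying \eqref{Eq_example_SGDM_LALM} as the instance of \eqref{Eq_Framework} with $\Phi_k = \Phi_{S,k}$ and invoking Proposition \ref{Prop_example_SGDM_stable_method} for embeddability.
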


	\subsection{Embedding proximal ADAM into Lagrangian-based methods}
	In this subsection, we first investigate how to develop a proximal ADAM that enjoys convergence guarantees for solving \eqref{Prob_simple_min_X} based on the ADAM for unconstrained optimization in \cite{xiao2023adam}. Then we show that our proposed framework \eqref{Eq_Framework} enables us to embed the proximal ADAM into Lagrangian-based methods directly.

	Throughout this subsection, we denote the preconditioned proximal mapping as 
	\begin{equation}
		\pprox_{\X}(x, y; v):= \mathop{\arg\min}_{z \in \X} \left\{\inner{y,z-x}+\frac{1}{2}\inner{v \odot (z-x), z-x}\right\}. 
	\end{equation}
	Based on the notation of $\pprox$ and the update scheme in \cite[Equation (AFM)]{xiao2023adam},  we consider the following proximal ADAM for solving \eqref{Prob_simple_min_X},  
	\begin{equation}
		\label{Eq_example_ADAM_ST}
		\left\{
		\begin{aligned}
			&\ykp = \yk - \tau_1 \eta_k (\yk - g_k),\\
			& \vkp = \vk - \tau_{2} \eta_k (\vk - g_k \odot g_k),\\
			&\xkp = (1-\eta_k) \xk + \eta_k \pprox_{\X}\left(\xk , \ykp; \frac{1}{\alpha}\sqrt{\vkp + \varepsilon}\right). 
		\end{aligned}
		\right.
	\end{equation}
	Here $g_k \in \D_h^{\delta_k}(\xk) + \xi_{k+1}$ estimates a stochastic subgradient of $h$ at $\xk$, while $\tau_1, \tau_2$ satisfying  $4\tau_1 \geq \tau_2 > 0$, are parameters for the momentum terms $\{\yk\}$ and variance estimators $\{\vk\}$, respectively. Moreover, $\alpha> 0$ is a prefixed constant. It is easy to verify that when $\X = \Rn$, the proximal ADAM in \eqref{Eq_example_ADAM_ST} reduces to the ADAM analyzed in \cite{xiao2023adam}. 
	
	Then the mapping  $\Phi_{A,k}: \X\times \mathbb{R}^n\times \mathbb{R}^n \times \mathbb{R} \rightrightarrows \X \times \mathbb{R}^n\times \mathbb{R}^n$ corresponding to the proximal ADAM in \eqref{Eq_example_ADAM_ST} can be expressed as follows, 
	\begin{equation*}
		\begin{aligned}
			\Phi_{A,k}(g,x,y,v, \eta) := 
			\left\{\left[
			\begin{matrix}
				(1-\eta) x + \eta \pprox_{\X}\left(x, y - \tau_1 \eta(y-g ); \frac{1}{\alpha}\sqrt{v - \tau_2 \eta(v - g\odot g) + \varepsilon}\right)\\
				y - \tau_1 \eta (y-g)\\
				v - \tau_2 \eta(v - g\odot g)
			\end{matrix}
			\right]\right\}.
		\end{aligned}
	\end{equation*}
	Therefore, the proximal ADAM in \eqref{Eq_example_ADAM_ST} can be reformulated as 
	\begin{equation}
		(\xkp, \ykp, \vkp) \in \Phi_{A,k}(g_k, \xk, \yk, \vk, \eta_k).
	\end{equation}
	Here we treat $x \in \Rn$ as primal variables and $(y,v) \in \bb{R}^{2n}$ as the auxiliary variables.

	To verify the embeddability of the proximal ADAM in \eqref{Eq_example_ADAM_ST}, we first define the auxiliary function $u_{A}: \X \times \Rn \times \Rn_+ \to \bb{R}$ and the mapping   $\tilde{z}_A: \X \times \Rn \times \Rn_+ \to \X$ as follows, 
	\begin{align}
		u_A(x, y, v) :={}& \min_{z \in \X} ~\inner{z-x, y} + \frac{1}{2\alpha}  \inner{  (v + \varepsilon)^{\frac{1}{2}} \odot (z-x), z-x},\\
		\tilde{z}_A(x,y,v) :={}& \mathop{\arg\min}_{z \in \X} ~\inner{z-x, y} + \frac{1}{2\alpha}  \inner{  (v + \varepsilon)^{\frac{1}{2}} \odot (z-x), z-x}.
	\end{align}
	Then the following lemma gives a closed-form expression of the gradients of $u_A$, which follows from direct calculations. We omit its proof for simplicity. 
	\begin{lem}
		\label{Le_example_ADAM_uA}
		For any $(x,y,v) \in \X \times \Rn \times \Rn_+$, it holds that 
		\begin{equation}
			\begin{aligned}
				&\nabla_{x} u_A(x,y,v) = -y + \frac{1}{\alpha} (v + \varepsilon)^{\frac{1}{2}} \odot (x - \tilde{z}_A(x,y,v)),\\
				&\nabla_{y} u_A(x,y,v) = \tilde{z}_A(x,y,v) - x,\\
				&\nabla_{v} u_A(x,y,v) = \frac{1}{4\alpha} (v+ \varepsilon)^{-\frac{1}{2}} \odot (\tilde{z}_A(x,y,v)-x)^2. 
			\end{aligned}
		\end{equation}
	\end{lem}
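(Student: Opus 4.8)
The plan is to recognize $u_A$ as the value function of a parametric strongly convex minimization problem and to differentiate it via Danskin's (envelope) theorem, so that the implicit dependence of the minimizer on the parameters never needs to be computed. Write the inner objective as $\psi(z;x,y,v) := \inner{z-x,y} + \frac{1}{2\alpha}\inner{(v+\varepsilon)^{1/2}\odot(z-x),\, z-x}$, so that $u_A(x,y,v) = \min_{z\in\X}\psi(z;x,y,v)$ and $\tilde{z}_A(x,y,v)$ is its minimizer. First I would verify that the inner problem is well posed: for $v \in \Rn_+$ and $\varepsilon > 0$ each entry of $(v+\varepsilon)^{1/2}$ is bounded below by $\varepsilon^{1/2} > 0$, so $\psi(\cdot\,; x,y,v)$ is $\frac{\varepsilon^{1/2}}{\alpha}$-strongly convex in $z$. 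Since $\X$ is closed and convex, the minimizer $\tilde{z}_A(x,y,v)$ therefore exists and is unique.

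Next I would invoke the envelope theorem for parametric minimization with a unique solution. The objective $\psi$ is jointly continuous and continuously differentiable in the parameters $(x,y,v)$, and the argmin is the singleton $\{\tilde{z}_A(x,y,v)\}$, which, by strong monotonicity of the associated variational inequality, depends continuously on $(x,y,v)$. Under these hypotheses the value function $u_A$ is differentiable, and its gradient equals the partial gradient of the objective with respect to the parameters evaluated at the optimal $z$:
\[
\nabla_{(x,y,v)} u_A(x,y,v) = \nabla_{(x,y,v)}\, \psi(z;x,y,v)\big|_{z = \tilde{z}_A(x,y,v)}.
\]
The point of this step is that the implicit dependence of $\tilde{z}_A$ on the parameters does not contribute: by the first-order optimality condition $-\nabla_z\psi(\tilde{z}_A;x,y,v)\in\NX(\tilde{z}_A)$, the chain-rule term pairing $\nabla_z\psi$ with the parametric derivative of $\tilde{z}_A$ cancels against the constraint, which is precisely what the envelope formula encodes.

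Finally I would carry out the three routine partial differentiations of $\psi$ at $z=\tilde{z}_A$. Differentiating $\inner{z-x,y}$ and the weighted quadratic term entrywise gives $\nabla_x\psi = -y - \frac{1}{\alpha}(v+\varepsilon)^{1/2}\odot(z-x)$, $\nabla_y\psi = z-x$, and $\nabla_v\psi = \frac{1}{4\alpha}(v+\varepsilon)^{-1/2}\odot(z-x)^2$; substituting $z=\tilde{z}_A$ and rewriting $-(\tilde{z}_A - x)$ as $(x-\tilde{z}_A)$ yields the three claimed identities (the sign of $z-x$ being irrelevant in the last one because of the entrywise square). The main obstacle is not the algebra but the justification of the envelope step, specifically confirming that the unique minimizer depends continuously (indeed Lipschitz-continuously) on $(x,y,v)$ so that Danskin's theorem applies; once strong convexity in $z$ together with smooth parameter dependence are in hand, this is standard, which is presumably why the authors label it a direct calculation.
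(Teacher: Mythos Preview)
Your proposal is correct and is exactly the kind of ``direct calculation'' the paper alludes to (the paper omits the proof entirely). Invoking Danskin's theorem after noting the strong convexity in $z$ and then reading off the three partial gradients of $\psi$ at $z=\tilde z_A$ is the standard route, and your computations of $\nabla_x\psi$, $\nabla_y\psi$, and $\nabla_v\psi$ are all right.
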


	Moreover, let $\Psi_{A}: \X \times \Rn \times \Rn_+ \to \bb{R}$ be defined as 
	\begin{equation}
		\Psi_{A}(x,y,v):= h(x) - \frac{1}{\tau_1} u_A(x,y,v), 
	\end{equation}
	and we define the  mapping $\ca{H}_A: \X \times \Rn \times \Rn_+ \rightrightarrows \Rn \times \Rn \times \Rn_+$ as, 
	\begin{equation}
		\ca{H}_{A}(x,y,v) = 
		\left[
		\begin{matrix}
			-x + \pprox_{\X} (x, y; \frac{1}{\alpha}\sqrt{v + \varepsilon})\\
			\tau_1(\D_h(x) - y)\\
			\tau_2(\conv\left( \D_h(x) \odot \D_h(x)  \right) - v)\\
		\end{matrix}
		\right].
	\end{equation}
	Then the following proposition illustrates that $\Psi$ is a Lyapunov function for the differential inclusion 
	\begin{equation}
		\label{Eq_example_Adam_DI}
		\left( \frac{\mathrm{d}x}{\mathrm{d}t}, \frac{\mathrm{d}y}{\mathrm{d}t}, \frac{\mathrm{d}v}{\mathrm{d}t}  \right) \in \ca{H}_A(x,y,v).
	\end{equation}
	\begin{prop}
		\label{Prop_example_ADAM_perturbed_path}
		Suppose $h$ is path-differentiable and admits $\D_h$ as its conservative field, and $0< \tau_2\leq 4\tau_1$. Then the function $\Psi$ is a Lyapunov function for the differential inclusion \eqref{Eq_example_Adam_DI}, with the stable set $\{(x, y, v) \in \X \times \Rn \times \Rn_+: 0 \in \D_h(x) + \NX(x), u_A(x,y,v) = 0 \}$. 
	\end{prop}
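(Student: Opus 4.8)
The plan is to verify directly the two defining properties of a Lyapunov function in Definition \ref{Defin_Lyapunov_function} by differentiating $\Psi_A$ along a trajectory $\gamma(t) = (x(t),y(t),v(t))$ of \eqref{Eq_example_Adam_DI}. The first observation I would record is that, since $\pprox_{\X}(x,y;\frac{1}{\alpha}\sqrt{v+\varepsilon})$ is by definition the unique minimizer $\tilde{z}_A(x,y,v)$, the $x$-component of $\ca{H}_A$ equals $p := \tilde{z}_A(x,y,v) - x = \nabla_y u_A(x,y,v)$ by Lemma \ref{Le_example_ADAM_uA}. Thus along the trajectory $\dot{x} = p$, while $\dot{y} = \tau_1(d_h - y)$ for some measurable selection $d_h(t) \in \D_h(x(t))$ and $\dot{v} = \tau_2(w - v)$ for some $w(t) \in \conv(\D_h(x(t)) \odot \D_h(x(t)))$; here $w \ge 0$ and $v \ge 0$ hold componentwise, the latter being preserved by the $v$-dynamics.

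Next I would compute $\frac{d}{dt}\Psi_A = \frac{d}{dt}h(x) - \frac{1}{\tau_1}\frac{d}{dt}u_A$. For the first term, the conservative-field property (Definition \ref{Defin_conservative_jacobian}) applied to the absolutely continuous curve $x(\cdot)$ gives $\frac{d}{dt}h(x(t)) = \inner{d_h(t), \dot{x}(t)} = \inner{d_h, p}$, legitimately reusing the selection $d_h$ from the $y$-dynamics because the value is common to all elements of $\D_h(x(t))$. For the second term, $u_A$ is $C^1$ with the gradients supplied by Lemma \ref{Le_example_ADAM_uA}, so the chain rule yields $\frac{d}{dt}u_A = \inner{\nabla_x u_A, p} + \inner{\nabla_y u_A, \tau_1(d_h - y)} + \inner{\nabla_v u_A, \tau_2(w - v)}$. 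Substituting the explicit gradients, the $\inner{d_h, p}$ contributions cancel and I am left with
\[
\frac{d}{dt}\Psi_A = \Big(\tfrac{1}{\tau_1} + 1\Big)\inner{y, p} + \tfrac{1}{\tau_1 \alpha}\inner{(v+\varepsilon)^{1/2}\odot p, p} - \tfrac{\tau_2}{4\tau_1\alpha}\inner{(v+\varepsilon)^{-1/2}\odot p^2, w - v}.
\]
The variational inequality characterizing $\tilde{z}_A$, tested at the feasible point $z = x \in \X$, gives $\inner{y, p} \le -\tfrac{1}{\alpha}\inner{(v+\varepsilon)^{1/2}\odot p, p}$; inserting this collapses the first two terms to $-\tfrac{1}{\alpha}\inner{(v+\varepsilon)^{1/2}\odot p, p}$, and a componentwise regrouping produces
\[
\frac{d}{dt}\Psi_A \le -\frac{1}{\alpha}\sum_i \frac{p_i^2}{\sqrt{v_i+\varepsilon}}\Big[\big(1 - \tfrac{\tau_2}{4\tau_1}\big)v_i + \varepsilon + \tfrac{\tau_2}{4\tau_1}w_i\Big] \le -\frac{\varepsilon}{\alpha}\sum_i \frac{p_i^2}{\sqrt{v_i+\varepsilon}} \le 0,
\]
where the hypothesis $0 < \tau_2 \le 4\tau_1$ together with $v, w \ge 0$ makes the bracket at least $\varepsilon$. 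This establishes the monotonicity condition and, crucially, shows $\frac{d}{dt}\Psi_A < 0$ whenever $p \ne 0$.

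For the strict-decrease condition I would argue by contradiction: if $\Psi_A(\gamma(t))$ never drops strictly below $\Psi_A(\gamma(0))$, then by monotonicity it is constant, so $\frac{d}{dt}\Psi_A = 0$ almost everywhere, which by the strict bound above forces $p(t) = 0$ for a.e.\ $t$, hence $p \equiv 0$ by continuity of $\tilde{z}_A$. Then $\dot{x} \equiv 0$ so $x(t) \equiv x_0$, and since $u_A = 0$ is equivalent to $p = 0$ (again by the variational inequality) one gets $u_A(\gamma(t)) \equiv 0$; moreover the optimality condition for $\tilde{z}_A(x_0, y(t), v(t)) = x_0$ reads $-y(t) \in \NX(x_0)$ for all $t$. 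The residual $y$-dynamics are $\dot{y} \in \tau_1(\D_h(x_0) - y)$, and I would estimate $\frac{d}{dt}\tfrac12\,\mathrm{dist}(y, \D_h(x_0))^2 \le -\tau_1\,\mathrm{dist}(y, \D_h(x_0))^2$ via the projection inequality on the convex compact set $\D_h(x_0)$, so that $\mathrm{dist}(y(t), \D_h(x_0)) \to 0$. Since $y(t)$ stays in the closed set $-\NX(x_0)$ and is bounded, any subsequential limit $y^\star$ lies in $\D_h(x_0) \cap (-\NX(x_0))$, whence $0 \in \D_h(x_0) + \NX(x_0)$. Combined with $u_A(\gamma(0)) = 0$ this places $\gamma(0)$ in the stable set, contradicting $\gamma(0) \notin \ca{B}$; hence a finite $T$ with $\Psi_A(\gamma(T)) < \Psi_A(\gamma(0))$ exists, and $\sup_{t\in[0,T]}\Psi_A(\gamma(t)) = \Psi_A(\gamma(0))$ by monotonicity.

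I expect the main obstacle to be this last step, namely the second Lyapunov condition at the degenerate configurations where $p = 0$ and the instantaneous decay rate vanishes: the argument must extract information from the auxiliary $y$-dynamics rather than from $\frac{d}{dt}\Psi_A$ itself, and the convergence-to-$\D_h(x_0)$ estimate combined with the invariance $y(t) \in -\NX(x_0)$ is what converts ``$\Psi_A$ stationary'' into ``$0 \in \D_h(x_0) + \NX(x_0)$''. I would also be careful about two routine but essential feeder facts: the nonnegativity and boundedness of $v$ and $w$ (needed for the sign of the bracket, hence for the role of $\tau_2 \le 4\tau_1$), and the conservative-field identity that lets the same selection $d_h$ appear in both $\frac{d}{dt}h$ and $\dot{y}$.
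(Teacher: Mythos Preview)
Your proposal is correct and follows essentially the same approach as the paper's own proof: differentiate $\Psi_A$ along a trajectory using Lemma~\ref{Le_example_ADAM_uA} and the conservative-field identity, apply the variational inequality for $\tilde{z}_A$ tested at $z=x$, exploit $\tau_2\le 4\tau_1$ together with $v,w\ge 0$ to obtain a strictly negative decay whenever $p\ne 0$, and for the second Lyapunov condition reduce to the frozen $y$-dynamics $\dot y\in\tau_1(\D_h(x_0)-y)$ combined with $-y(t)\in\NX(x_0)$. Your final decay constant $-\tfrac{\varepsilon}{\alpha}\sum_i p_i^2/\sqrt{v_i+\varepsilon}$ is in fact slightly sharper than the paper's $-\tfrac{\tau_2\varepsilon}{4\alpha\tau_1}\sum_i p_i^2/\sqrt{v_i+\varepsilon}$ (the paper discards some negativity by replacing $\tfrac{1}{\alpha}$ with $\tfrac{\tau_2}{4\alpha\tau_1}$ before combining terms), but this makes no difference to the argument.
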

	\begin{proof}
		For any trajectory of the differential inclusion \eqref{Eq_example_Adam_DI} with $x(s) \in \X$ for almost every $s \in \bb{R}_+$, there exists $l_h: \bb{R}_+ \to \Rn$ and  $\tilde{l}_h: \bb{R}_+ \to \Rn$ such that $l_h(s), \tilde{l}_h(s) \in \D_h(x(t))$, $\dot{x}(s) = -x(s) + \pprox_{\X} (x(s), y(s); \frac{1}{\alpha}\sqrt{v(s) + \varepsilon})$, $\dot{y}(s) = \tau_1(l_h(s) - y(s))$, and $\dot{v}(s) = \tau_2 (\tilde{l}_h(s) \odot \tilde{l}_h(s) - v(s))$ holds for almost every $s \in \bb{R}_+$.
		
		Let $w(s) := \pprox_{\X}(x(s),y(s); \frac{1}{\alpha}\sqrt{v(s) + \varepsilon } )$. Then from the definition of $w(s)$, 
		$q(s):= y(s) + \frac{1}{\alpha} \sqrt{v(s) + \varepsilon} \odot (w(s) - x(s)) \in -\ca{N}_{\X}(w(s))$ holds for almost every $s \geq 0$. Then it holds that 
		\begin{equation}
			\label{Eq_Prop_example_ADAM_perturbed_path_0}
			\begin{aligned}
				&\inner{y(s), w(s) - x(s)} + \frac{1}{\alpha}\inner{\sqrt{v(s) + \varepsilon} \odot (w(s) - x(s)), w(s) - x(s)}\\
				={}& \inner{y(s) + \frac{1}{\alpha} \sqrt{v(s) + \varepsilon} \odot (w(s) - x(s)), w(s) - x(s)} = \inner{-q(s)}{x(s) - w(s)}
				\leq 0. 
			\end{aligned}
		\end{equation}
		Here the last inequality follows from the fact that $-q(s)\in \ca{N}_{\X}(w(s))$ and $x(s) \in \X.$

		As a result, for almost every $s\geq 0$, we have 
		\begin{equation*}
			\begin{aligned}
				&\inner{\D_{\Psi_A}(x(s),y(s),v(s)), (\dot{x}(s), \dot{y}(s), \dot{v}(s)) } \\
				\ni{}& \inner{w(s) - x(s), l_h(s) + \frac{1}{\tau_1}y(s) + \frac{1}{\alpha \tau_1} \sqrt{v(s) + \varepsilon} \odot (x(s) - w(s))  } \\
				&+ \frac{1}{\tau_1}\inner{ \tau_1(l_h(s) - y(s)), -(w(s) - x(s))} + \frac{\tau_2}{4\alpha \tau_1}\inner{\tilde{l}_h(s) \odot \tilde{l}_h(s) - v(s), - (v(s)+\varepsilon)^{-\frac{1}{2}} \odot (w(s) - x(s))^2}\\
				\leq {}& \inner{w(s) - x(s), l_h(s)} + \inner{-(w(s) - x(s)), l_h(s) - y(s)}\\
				& + \frac{\tau_2}{4\alpha \tau_1}\inner{\tilde{l}_h(s) \odot \tilde{l}_h(s) - v(s), - (v(s)+\varepsilon)^{-\frac{1}{2}} \odot (w(s) - x(s))^2}\\
				={}& \inner{y(s), w(s) - x(s) } +  \frac{\tau_2}{4\alpha \tau_1}\inner{\tilde{l}_h(s) \odot \tilde{l}_h(s) - v(s), - (v(s)+\varepsilon)^{-\frac{1}{2}} \odot (w(s) - x(s))^2}\\
				\leq{}& \inner{y(s), w(s) - x(s) } +  \frac{\tau_2}{4\alpha \tau_1}\inner{ v(s), (v(s)+\varepsilon)^{-\frac{1}{2}} \odot (w(s) - x(s))^2}\\
				\leq{}
				& -\frac{1}{\alpha} \inner{\sqrt{v(s) + \varepsilon} \odot (w(s) - x(s)), w(s) - x(s)}+  \frac{\tau_2}{4\alpha \tau_1}\inner{ v(s), (v(s)+\varepsilon)^{-\frac{1}{2}}\odot (w(s) - x(s))^2}\\
				\leq{}& -\frac{\tau_2}{4\alpha \tau_1} \inner{\sqrt{v(s) + \varepsilon} - v(s) \odot (v(s) + \varepsilon)^{-\frac{1}{2}}, (w(s) - x(s))^2}\\
				={}& -\frac{\tau_2 \varepsilon}{4\alpha\tau_1}  \inner{(v(s) + \varepsilon)^{-\frac{1}{2}},  (w(s) - x(s))^2  }. 
			\end{aligned}
		\end{equation*}
		Here, the first inequality used \eqref{Eq_Prop_example_ADAM_perturbed_path_0}, and the second inequality uses the fact that 
		\begin{equation*}
			\inner{\tilde{l}_h(s) \odot \tilde{l}_h(s) \odot (v(s) + \varepsilon)^{-\frac{1}{2}},  (w(s) - x(s))^2} \geq 0, \quad \forall s \geq 0. 
		\end{equation*}
		Moreover, the third inequality uses \eqref{Eq_Prop_example_ADAM_perturbed_path_0} again, and the last inequality directly uses the fact that $4\tau_1 \geq \tau_2 > 0$. 
		
		As a result, we can conclude that 
		\begin{equation}
			\begin{aligned}
				&\Psi_A(x(t), y(t), v(t) ) - \Psi_A(x(0), y(0), v(0))
				\leq \int_{0}^t -\frac{\varepsilon \tau_2}{4\alpha\tau_1}\inner{(v(s) + \varepsilon)^{-\frac{1}{2}},  (w(s) - x(s))^2  } \mathrm{d}s \leq 0.  
			\end{aligned}
		\end{equation}
		
		Moreover, when   $\Psi_A(x(t), y(t), v(t) ) = \Psi_A(x(0), y(0), v(0))$ holds for all  $t>0$, we can conclude that $w(s) - x(s) = 0$ holds for almost every $s \geq 0$. Therefore, from the fact that $\dot{x}(s) \in -x(s) + w(s)$ holds for almost every $s \geq 0$, we can conclude that 
		\begin{equation}
			\label{Eq_Prop_example_ADAM_perturbed_path_1}
			x(s) = x(0) \qquad \text{for almost every $s \geq 0$.}
		\end{equation}
		Then from the definition of $\pprox$, we can conclude that $0 \in y(s) + \NX(x(s))$ holds for almost every $s \geq 0$. Together with the graph-closedness of $\NX(\cdot)$ and \eqref{Eq_Prop_example_ADAM_perturbed_path_1} it holds that 
		\begin{equation}
			\label{Eq_Prop_example_ADAM_perturbed_path_2}
			0 \in y(s) + \NX(x(0)) \qquad \text{for almost every $s \geq 0$.}
		\end{equation}
		
		Furthermore, the update scheme of $\{\yk\}$ in \eqref{Eq_example_Adam_DI} illustrates that $\dot{y}(s) \in \tau_1(\D_h(x(s)) -y(s) ) = \tau_1(\D_h(x(0)) -y(s) ) $ holds for almost every $s > 0$. Therefore,   $ \lim_{s \to +\infty}\mathrm{dist}\left(y(s), \D_h(x(0))  \right) = 0$. Then from the convexity of $\D_h(x(0))$ and \eqref{Eq_Prop_example_ADAM_perturbed_path_2},  we can conclude that $0 \in \D_h(x(0)) + \NX(x(0))$. 
		Therefore, $\Psi$ is a Lyapunov function for the differential inclusion \eqref{Eq_example_Adam_DI} and admits $\{(x, y, v) \in \X \times \Rn \times \Rn_+: 0 \in \D_h(x) + \NX(x), u_A(x,y,v) = 0 \}$ as the stable set. 
		This completes the proof. 
	  \end{proof}

	Based on Lemma \ref{Le_example_ADAM_uA}, Proposition \ref{Prop_example_ADAM_perturbed_path}, and the ODE approach introduced in \cite{benaim2005stochastic}, we obtain Proposition \ref{Prop_example_ADAM_stable_method}  showing that $\{\Phi_{A,k}\}$ represents an embeddable stochastic subgradient method. 
	\begin{prop}
		\label{Prop_example_ADAM_stable_method}
		With $0< \tau_2 \leq 4\tau_1$,  the sequence of mappings $\{\Phi_{A,k}\}$ represents an embeddable stochastic subgradient method. 
	\end{prop}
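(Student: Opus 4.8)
The plan is to follow the template of Proposition \ref{Prop_example_SGDM_stable_method}, now treating $x$ as the primal variable and $(y,v)$ as the auxiliary variables, and to reduce everything to Theorem \ref{The_convergence_beniam} applied to the differential inclusion \eqref{Eq_example_Adam_DI} with the Lyapunov function $\Psi_A$ furnished by Proposition \ref{Prop_example_ADAM_perturbed_path}. First I would verify Assumption \ref{Assumption_Phi}: each coordinate of the update moves $\xk,\yk,\vk$ by an $\eta_k$-multiple of a locally bounded function of $(g,x,y,v)$, namely $\pprox_{\X}(\cdot)-x$ for the primal block and $\tau_1(y-g)$, $\tau_2(v-g\odot g)$ for the auxiliary blocks; since the displacement of $\pprox_{\X}$ is controlled by $\norm{y}$, $\norm{v}$, $\alpha$, $\varepsilon$, a locally bounded $T_\Phi$ exists. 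This part is routine.

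The core step is to show that, under Condition \ref{Cond_stable_alg}, the interpolated process of $\{(\xk,\yk,\vk)\}$ is a perturbed solution of \eqref{Eq_example_Adam_DI}, after which Lemma \ref{Le_interpolated_process} and Theorem \ref{The_convergence_beniam} finish the argument. Since $\eta_k\to 0$ and the drifts are bounded, $\norm{\xkp-\xk}+\norm{\ykp-\yk}+\norm{\vkp-\vk}\to 0$. Writing $g_k=d_k+\xi_{k+1}$ with $d_k\in\D_h^{\delta_k}(\xk)$, the $y$-update becomes $\ykp=\yk+\eta_k[\tau_1(d_k-\yk)+\tau_1\xi_{k+1}]$, whose clean drift lies in the $\delta_k$-expansion of the $y$-component of $\ca{H}_A$ with injected noise $\tau_1\xi_{k+1}$. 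For the $x$-update, the drift $\pprox_{\X}(\xk,\ykp;\tfrac{1}{\alpha}\sqrt{\vkp+\varepsilon})-\xk$ differs from the $\ca{H}_A$-target evaluated at $(\xk,\yk,\vk)$ only through $\ykp-\yk$ and $\vkp-\vk$, both vanishing; by the non-expansiveness and continuity of $\pprox_{\X}$ together with the graph-closedness of $\ca{H}_A$, this difference is absorbed into a diminishing $\tilde\delta_k$-expansion, exactly as in the proof of Proposition \ref{Prop_example_SGDM_stable_method}.

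I expect the main obstacle to be the $v$-update, because it depends \emph{quadratically} on the noisy $g_k$. Expanding gives $g_k\odot g_k=d_k\odot d_k+2\,d_k\odot\xi_{k+1}+\xi_{k+1}\odot\xi_{k+1}$. The cross term $2\,d_k\odot\xi_{k+1}$ is a uniformly bounded, $\ca{F}_k$-adapted martingale-difference sequence (as $d_k$ is bounded and measurable while $\xi_{k+1}$ is a bounded martingale difference in the regime underlying Condition \ref{Cond_stable_alg}(4)), so its $\eta_k$-weighted partial sums obey the averaging condition in \eqref{Eq_Defin_stable_algorithm_noise}. The genuinely delicate term is $\xi_{k+1}\odot\xi_{k+1}$, which is bounded but not conditionally mean-zero, so it introduces a persistent second-moment contribution and $\{\vk\}$ tracks an \emph{effective} (nonnegative, bounded) second-moment inclusion rather than $\conv(\D_h\odot\D_h)$ alone. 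The key observation, following \cite{xiao2023adam}, is that the Lyapunov descent established in Proposition \ref{Prop_example_ADAM_perturbed_path} uses the $v$-target only through the componentwise nonnegativity $\tilde l_h\odot\tilde l_h\ge 0$; hence the descent, and therefore the characterization of the stable set through $\{0\in\D_h(x)+\NX(x),\,u_A=0\}$, is insensitive to this bias and the argument goes through with the effective inclusion.

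With the perturbed-solution property in hand, Theorem \ref{The_convergence_beniam} (using Condition \ref{Cond_stable_alg}(1), the empty-interior property of $\{h(x):0\in\D_h(x)+\NX(x)\}$, and Proposition \ref{Prop_example_ADAM_perturbed_path}) yields that every cluster point of $\{(\xk,\yk,\vk)\}$ lies in the stable set and that $\{\Psi_A(\xk,\yk,\vk)\}$ converges. Finally, since $u_A\le 0$ is continuous and $\{(\xk,\yk,\vk)\}$ is bounded with every cluster point satisfying $u_A=0$, the bounded scalar sequence $\{u_A(\xk,\yk,\vk)\}$ has all cluster points equal to $0$ and hence converges to $0$; combined with $\Psi_A=h-\tfrac{1}{\tau_1}u_A$, this gives convergence of $\{h(\xk)\}$. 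By Definition \ref{Defin_stable_stochastic_method}, $\{\Phi_{A,k}\}$ represents an embeddable stochastic subgradient method.
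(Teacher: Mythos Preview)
Your proof follows the same route as the paper's: verify Assumption \ref{Assumption_Phi}, show via Lemma \ref{Le_interpolated_process} that the interpolated process of $\{(\xk,\yk,\vk)\}$ is a perturbed solution of \eqref{Eq_example_Adam_DI}, invoke Theorem \ref{The_convergence_beniam} with the Lyapunov function of Proposition \ref{Prop_example_ADAM_perturbed_path}, and deduce convergence of $\{h(\xk)\}$ from $u_A\to 0$.

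The one noteworthy difference is your handling of the $v$-update. The paper's proof is terse here: it simply asserts the inclusion
\[
(\xkp,\ykp,\vkp)\in(\xk,\yk,\vk)+\eta_k\bigl(\ca{H}_A^{\tilde\delta_k}(\xk,\yk,\vk)+(0,\tau_1\xi_{k+1},0)\bigr)
\]
with no noise component in the $v$-slot and moves on. You instead decompose $g_k\odot g_k$ explicitly, separate the martingale cross term $2d_k\odot\xi_{k+1}$ from the non-centered piece $\xi_{k+1}\odot\xi_{k+1}$, and observe that the latter forces $\{\vk\}$ to track an enlarged nonnegative second-moment inclusion rather than $\conv(\D_h\odot\D_h)$ itself---which is harmless because the descent computation in Proposition \ref{Prop_example_ADAM_perturbed_path} uses only the componentwise inequality $\tilde l_h\odot\tilde l_h\ge 0$ and the stable-set argument never touches the $v$-dynamics. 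This is the treatment in \cite{xiao2023adam} and is more careful than what the paper writes down; the paper's one-line inclusion is really shorthand for exactly this reasoning.
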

	\begin{proof}
		
		For any path-differentiable locally Lipschitz continuous function $h$ with conservative field $\D_h$,  under Condition \ref{Cond_stable_alg}, we get from the local boundedness of $\D_h$ that  $\lim_{k\to +\infty} \norm{\xkp - \xk} =0$. As a result, there exists a nonnegative diminishing sequence $\{\tilde{\delta}_k\}$ such that 
		\begin{equation}
				\begin{aligned}
					&(\xkp, \ykp, \vkp) \in  \bigcup_{d_x \in \D_h^{\delta_k}(\xk) + \xi_{k+1}} \Phi_{A,k}(d_x, \xk, \yk, \vk, \eta_k ) \\
					\subseteq{}& (\xk, \yk, \zk)  + \eta_k \left( \ca{H}^{\tilde{\delta}_k}(\xk, \yk, \zk) + (0,\tau_1 \xi_{k+1}, 0) \right). 
				\end{aligned}
		\end{equation}
		Then from Condition \ref{Cond_stable_alg},   Lemma \ref{Le_interpolated_process} illustrates that the interpolated process of $\{\xk, \yk, \vk\}$ is a perturbed solution to the differential inclusion \eqref{Eq_example_SGDM_DI}. Finally, from Theorem \ref{The_convergence_beniam}, Proposition \ref{Prop_example_ADAM_perturbed_path}, and the fact that $\{h(x): 0 \in \D_h(x) + \NX(x), ~x \in \X\}$ has empty interior, we have that any cluster point of $\{(\xk, \yk, \vk)\}$ lies in the set $\{(x, y, v) \in \Rn \times \Rn \times \Rn_+: 0 \in \D_h(x) + \ca{N}_{\X}(x), u_{A}(x, y, v) = 0\}$, and the sequence $\{\Psi_{A}(\xk, \yk, \vk)\}$ converges. Noticing that  $\lim_{k\to +\infty} u_{A}(\xk, \yk, \vk) = 0$, we can conclude that the sequence $\{h(\xk)\}$ converges. 
		
		As a result, from Definition \ref{Defin_stable_stochastic_method}, the ADAM method in \eqref{Eq_example_ADAM_ST} is an embeddable stochastic subgradient method represented by $\{\Phi_{A, k}\}$. This completes the proof. 
	  \end{proof}

	Then, based on the framework \eqref{Eq_Framework}, we consider the following ADAM-embedded Lagrangian-based method (ADAM-LALM), 
	\begin{equation}
		\label{Eq_example_ADAM_LALM}
		\tag{ADAM-LALM}
		\left\{
		\begin{aligned}
			& l_{x,k} = d_k + J_{c,k} (\lambda_k + \rho  \wk) + \xi_{k+1},\\
			& \text{\fcolorbox{white}{gray!20}{$\ykp = \yk - \tau_{1} \eta_k (\yk - l_{x,k}) $,}}\\
			& \text{\fcolorbox{white}{gray!20}{$\vkp = \vk - \tau_{2} \eta_k (\vk - l_{x,k} \odot l_{x,k}) $,}}\\
			& \text{\fcolorbox{white}{gray!20}{$\xkp = (1-\eta_k) \xk + \eta_k \pprox_{\X}\left(\xk, \ykp; \frac{1}{\alpha}\sqrt{\vkp + \varepsilon}\right) $,}}\\
            & \text{Compute $\wkp$ as an approximated evaluation to $c(\xkp)$},\\
			& \lambda_{k+1} = \lambda_k + \theta_k \left( \mathrm{regu}(\wkp) - \frac{1}{\beta}\lambda_k  \right).
		\end{aligned}
		\right.
	\end{equation}
	Here, the second to fourth steps of \eqref{Eq_example_SGD_LALM} are shaded in grey to highlight the embedding of the proximal ADAM method, as demonstrated in \eqref{Eq_example_Adam_DI}, into the framework outlined in \eqref{Eq_Framework}.
	
	Finally, based on the convergence properties of framework \eqref{Eq_Framework}, we present the following theorem establishing the convergence properties of \eqref{Eq_example_ADAM_LALM}. 
	\begin{theo}
		Suppose Assumption \ref{Assumption_f}, Assumption \ref{Assumption_weak_sard}, and Assumption \ref{Assumption_framework} hold, and $0< \tau_2 \leq 4\tau_1$. Then, for any sequence $\{\xk\}$ generated by \eqref{Eq_example_ADAM_LALM}, any cluster point of $\{\xk\}$ lies in $\{ x \in \X: 0 \in \D_g(x) + \NX(x)\}$, and the sequence $\{g(\xk)\}$ converges. 
		
		Moreover, when we further assume that Condition \ref{Cond_regularity_condition}  holds,
		and $f$ is $M_f$-Lipschitz continuous and $\beta > \frac{M_f}{\nu}$ in \eqref{Eq_example_ADAM_LALM}, then any cluster point of $\{\xk\}$ is a $(\D_f, \D_c)$-KKT point of \eqref{Prob_Ori}, and the sequence $\{f(\xk)\}$ converges. 
	\end{theo}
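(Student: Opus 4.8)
The plan is to treat this statement as a direct specialization of the general convergence theory (Theorem \ref{Theo_convergence_Dg} and Corollary \ref{Coro_convergence_DfDc_exactness}) to the particular embedded method defined by $\{\Phi_{A,k}\}$, exactly as was done for the SGD and SGDM variants. First I would observe that the scheme \eqref{Eq_example_ADAM_LALM} is literally the framework \eqref{Eq_Framework} with the black-box update $(\xkp, \ykp) \in \Phi_k(l_{x,k}, \xk, \yk, \eta_k)$ instantiated by the proximal ADAM map $\Phi_{A,k}$: the primal variable is $x$ and the auxiliary variable is the stacked pair $(y,v) \in \Rn \times \Rn$, so that $q = 2n$ in the notation of \eqref{Eq_Intro_subgradient_method}. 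The three grey-shaded lines of \eqref{Eq_example_ADAM_LALM} reproduce the three components of $\Phi_{A,k}(l_{x,k}, \xk, (\yk, \vk), \eta_k)$ verbatim, while the remaining two lines are the common $w$-tracking step and dual-ascent step of \eqref{Eq_Framework}.

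Next I would invoke Proposition \ref{Prop_example_ADAM_stable_method}, which under the hypothesis $0 < \tau_2 \leq 4\tau_1$ guarantees that $\{\Phi_{A,k}\}$ represents an embeddable stochastic subgradient method in the sense of Definition \ref{Defin_stable_stochastic_method}. This is the only place where the ADAM-specific condition on $\tau_1,\tau_2$ is consumed, and it supplies precisely the property required by Theorem \ref{Theo_convergence_Dg}: that the embedded method satisfies Assumption \ref{Assumption_Phi} and that Condition \ref{Cond_stable_alg} drives every cluster point of its iterates into the stationary set while forcing convergence of the associated function values. With this in hand, and under Assumption \ref{Assumption_f}, Assumption \ref{Assumption_weak_sard}, and Assumption \ref{Assumption_framework}, Theorem \ref{Theo_convergence_Dg} applies directly and yields the first assertion: every cluster point of $\{\xk\}$ lies in $\{x \in \X: 0 \in \D_g(x) + \NX(x)\}$ and $\{g(\xk)\}$ converges.

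Finally, for the KKT assertion I would add Condition \ref{Cond_regularity_condition} together with the $M_f$-Lipschitz continuity of $f$ and the threshold $\beta > M_f/\nu$, and apply Corollary \ref{Coro_convergence_DfDc_exactness}. The mechanism is that Proposition \ref{Prop_exactness_g} makes $g$ an exact penalty, so every $\D_g$-stationary point is feasible; since the first part already places all cluster points of $\{\xk\}$ in the $\D_g$-stationary set, they are all feasible, and Proposition \ref{Prop_relation_DfDc_Dg} then upgrades each to a $(\D_f, \D_c)$-KKT point of \eqref{Prob_Ori}. The stated convergence of $\{f(\xk)\}$ is the feasibility-restricted consequence recorded in Corollary \ref{Coro_convergence_DfDc_exactness}.

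I expect the only genuinely nontrivial bookkeeping to be the identification of the auxiliary variable $(y,v)$ with dimension $q = 2n$ and the verification that the uniform-boundedness hypothesis of Assumption \ref{Assumption_framework}(1) is read for this stacked variable; every other step is a direct citation. Since all substantive analytic content has already been carried out in Proposition \ref{Prop_example_ADAM_stable_method}, Theorem \ref{Theo_convergence_Dg}, and Corollary \ref{Coro_convergence_DfDc_exactness}, this theorem is a corollary and its proof can reasonably be omitted, exactly as for Theorem \ref{Prop_example_SGD_convergence_Dg} and Theorem \ref{Prop_example_SGDM_convergence_Dg}.
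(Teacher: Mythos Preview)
Your proposal is correct and mirrors exactly the paper's approach: the paper treats this theorem as an immediate corollary of Theorem \ref{Theo_convergence_Dg} and Corollary \ref{Coro_convergence_DfDc_exactness}, invoking Proposition \ref{Prop_example_ADAM_stable_method} (under $0<\tau_2\le 4\tau_1$) to certify that $\{\Phi_{A,k}\}$ is embeddable, and indeed omits the proof just as for Theorems \ref{Prop_example_SGD_convergence_Dg} and \ref{Prop_example_SGDM_convergence_Dg}.
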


	
 \section{Numerical Experiments}
	In this section, we conduct preliminary numerical experiments to demonstrate the numerical performance of our proposed framework that embeds efficient proximal stochastic subgradient methods. 
	All the numerical experiments were conducted using an NVIDIA RTX 3090 GPU and were implemented in Python 3.9 with PyTorch 1.12.0.

	Following the settings in \cite{barlaud2021learning,grigas2019stochastic,pokutta2020deep}, we focus on training an $L$-layer nonsmooth neural network with $L_1$ ball and box constraints. The detailed optimization problem can be expressed as 
	\begin{equation}
		\min_{x\in\mathbb{R}^n}\;\{f(x):\;\|x_i\|_1\leq r_i,\;x\in\mathcal{K}\subset\mathbb{R}^n,\;i=1,...,L\}.
		\label{Eq_Experiment_Prob_L1}
	\end{equation}
	Here $f$ represents the loss function of the neural network, which is chosen as  LeNet10 \cite{lecun1998gradient} on MNIST \cite{lecun1998mnist} and ResNet14 \cite{he2016deep} on CIFAR-10 \cite{krizhevsky2009learning} datasets in our numerical experiments. 
	By introducing a slack variable $s\in\mathbb{R}^L$ and $\tilde x:=(x,s)\in\mathbb{R}^{n+L}$, we can reformulate problem \eqref{Eq_Experiment_Prob_L1} as the following problem in the form of \eqref{Prob_Ori}:
	\begin{equation}
		\label{Eq_Experiment_Prob_L1ball}
		\begin{aligned}
			\min_{\tilde x=(x,s)\in\mathcal{K}\times\mathbb{R}^L_+}\; \big\{f(x)
			: \;c(\tilde x)=0\big\},
		\end{aligned}
	\end{equation}
	where $c(\tilde x)=(c_1(x_1),...,c_L(x_L))$ and $c_i(\tilde x_i)=\|x_i\|_1+s_i-r_i$, for $i=1,...,L$.
	We set $r_i=1$ for all $i=1,...,L$ and $\mathcal{K}=[-1,1]^n$ in our experiments. It is worth mentioning that we employ the ReLU as the activation function for all the convolutional layers and full-connected layers, hence the loss function $f$ is nonsmooth but definable, as illustrated in \cite{davis2020stochastic,bolte2021conservative}.

	In our numerical experiments, we choose to embed proximal SGDM and proximal ADAM into our framework \eqref{Eq_Framework}, which results in the update schemes in \eqref{Eq_example_SGDM_LALM} and \eqref{Eq_example_ADAM_LALM}, respectively. To have better illustrations, we compare the numerical performance of these two methods with those of the stochastic inexact augmented Lagrangian method (Stoc-iALM) proposed by \cite[Algorithm 1]{li2023stochastic}. As illustrated in \cite{li2023stochastic}, the Stoc-iALM employs the P-Storm to solve its primal subproblems. However, developed based on SVRG, P-Storm usually delivers inferior performance than proximal SGDM and proximal ADAM in neural network training tasks. Therefore, to better compare these methods, we also incorporate the same proximal stochastic subgradient methods (e.g., proximal SGDM and proximal ADAM) to solve the subproblems within Stoc-iALM. These two variants of Stc-iALM are denoted as SGDM-iALM and ADAM-iALM, respectively. 
	Furthermore, notice that the convergence analysis in \cite{{li2023stochastic}} necessarities the weak convexity of the objective function $f$. Consequently, their results cannot guarantee the convergence of their proposed methods for solving \eqref{Eq_Experiment_Prob_L1}, as the loss function $f$ is typically not Clarke regular. As highlighted in Section 1, there is no prior research investigating the development of a linearized Lagrangian-based method for solving \eqref{Prob_Ori} with such $f(x)$ and $c(x)$.

	In all the test instances of our numerical experiments, we train the neural network for $400$ epochs with a batch size of $128$. Due to the non-regularity of $f$ in \eqref{Eq_Experiment_Prob_L1ball}, the suggested stopping criteria for the primal subproblems in Stoc-iALM are no longer valid. Therefore, we choose to execute the inner solvers for $500$ steps to solve the primal subproblems within SGDM-iALM and ADAM-iALM. All the other hyper-parameters in the compared methods are tuned by the following strategies.
	\begin{itemize}
		\item SGDM-LALM/SGDM-iALM: We set $\theta_k=\theta_0$ and $\eta_k=\frac{0.1}{\sqrt{s+1}}$ if the iteration number $k$ lies in the $s$-th epoch. Additionally, $\tau=1$, $\beta=1$, and we search for parameters $(\alpha, \rho, \theta_0)$ within the set $\{l\times10^{-k}:\,l=1,5,\,k=1,2,3\}\times\{10^{-k}:\,k=1,2,3,4\}\times\{10^{-k}:\,k=1,2,4,6,8\}$. We train the neural network for 20 epochs and select hyperparameters based on the best test accuracy, while ensuring the constraint feasibility is reduced by more than 50\%  compared to the initial point. In addition, as  SGDM-iALM updates its dual variables $\{\lambda_k\}$ by \eqref{Eq_Intro_dual_ascent_iALM}, we choose $\theta_k$ in \eqref{Eq_Intro_dual_ascent_iALM} as $1$ throughout the iterations.  
		\item ADAM-LALM/ADAM-iALM: We set $\tau_1=1$, $\tau_2=0.1$, $\eta_k=\frac{0.1}{\sqrt{s+1}}$, and $\epsilon=10^{-8}$ as the default setting for ADAM in PyTorch. The remaining hyperparameters follow the same strategy as in SGDM-LALM/SGDM-iALM. 
	\end{itemize}

	Figure \ref{fig:lenet-mnist} and Figure \ref{fig:lenet-cifar10} demonstrate the overall comparability of our proposed methods SGDM/ADAM-LALM with the existing augmented Lagrangian methods SGDM/ADAM-iALM proposed by \cite{li2023stochastic}. It is worth mentioning that the theoretical analysis in \cite{li2023stochastic} requires the weak convexity of $f$ and the differentiability of $c$ in \eqref{Prob_Ori}, hence lacking convergence guarantees in solving \eqref{Prob_Ori}. From Figure \ref{fig:lenet-mnist}, we can observe that SGDM-iALM and ADAM-iALM exhibit slightly better test performance, while our proposed method can achieve lower constraint feasibility. In Figure \ref{fig:lenet-cifar10}, SGD-type methods SGDM-LALM and SGDM-iALM demonstrate superior generalization performance compared to ADAM-type methods, namely ADAM-LALM and ADAM-iALM. Regarding constraint feasibility, we can observe that the performance of our proposed methods SGDM-LALM and ADAM-LALM are comparable with their counterparts.

	From these results of our numerical experiments, we have observed a trade-off between test accuracy and feasibility at the end phase of the training, as illustrated in both Figure \ref{fig:lenet-mnist} and Figure \ref{fig:lenet-cifar10}. Intuitively, a milder requirement on the constraint violation yields lower function values, leading to higher test accuracy.   
	To better illustrate this phenomenon, we employ the same SGDM-LALM/ADAM-LALM with varying parameters $\alpha$ and different $\rho$ to train LeNet18 on the MNIST dataset. The results, as shown in Figure \ref{fig:lenet-mnist-sgdm-trade-off} and Figure \ref{fig:lenet-mnist-adam-trade-off}, consistently validate the presence of this trade-off between test accuracy and constraint feasibility. For example, in Figure \ref{fig:lenet-mnist-adam-trade-off}, ADAM-LALM with $\rho=10^{-2}$ and $\rho=10^{-5}$ exhibit a remarkable reduction in feasibility compared to the other two configurations.  However, this reduction comes at the cost of a slower training loss decrease and a lower test accuracy.

	\begin{figure*}[tb]
		\caption{Numerical results on training LeNet10 on MNIST with constraints.} \label{fig:lenet-mnist}
		\begin{center}
			\setlength{\tabcolsep}{0.0pt}  
			\scalebox{1}{\begin{tabular}{ccc}
					\includegraphics[width=0.33\linewidth]{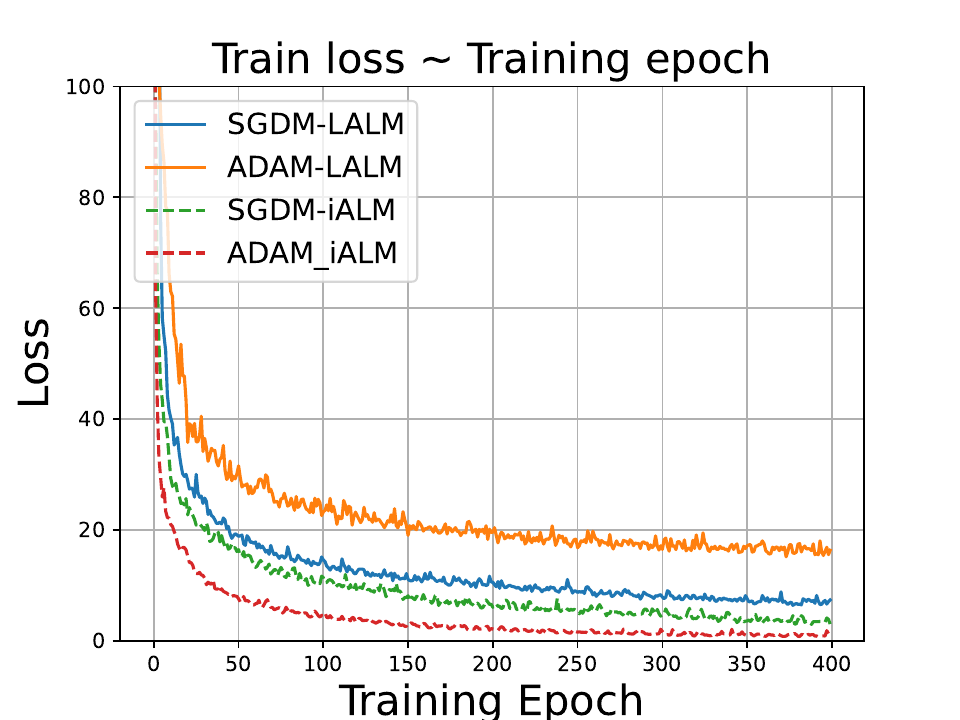}&
					\includegraphics[width=0.33\linewidth]{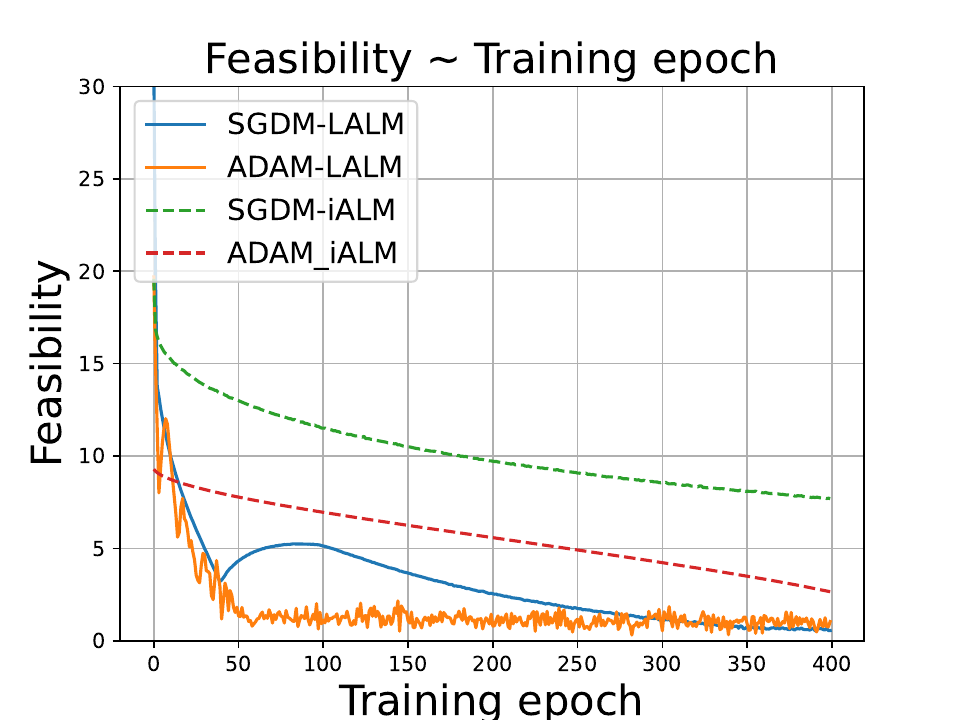}&
					\includegraphics[width=0.33\linewidth]{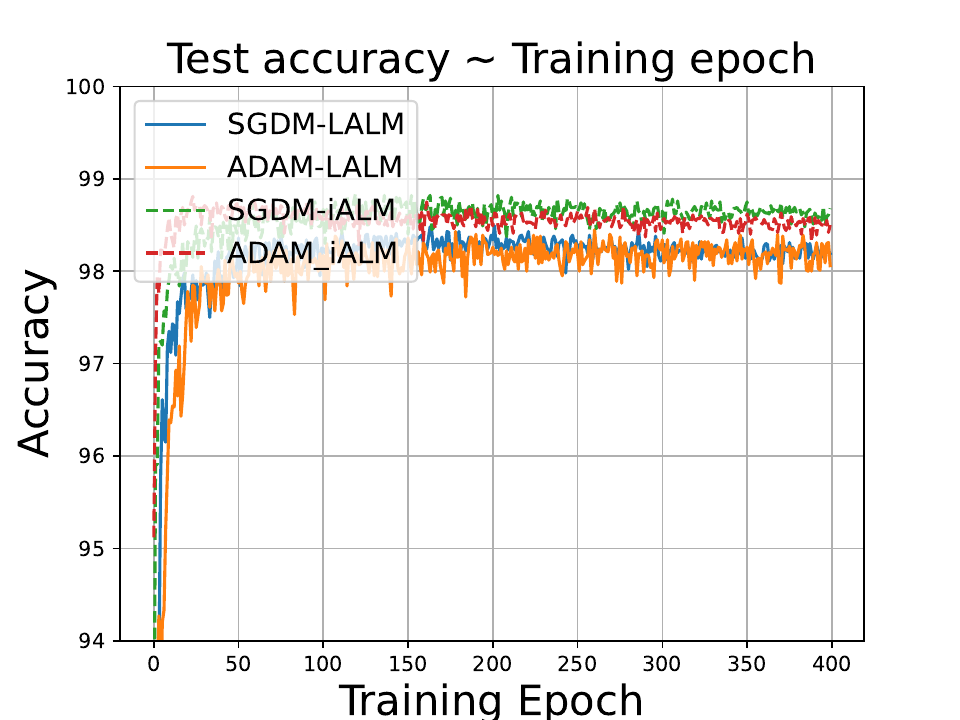}\\
					\multicolumn{1}{c}{\footnotesize{(a) Train loss}} &  \multicolumn{1}{c}{\footnotesize{(b) Feasibility}}&
					\multicolumn{1}{c}{\footnotesize{(c) Test accuracy}}                  
			\end{tabular}}
		\end{center}
	\end{figure*}

	\begin{figure*}[tb]
		\caption{Numerical results on training ResNet14 on CIFAR10 with constraints.} \label{fig:lenet-cifar10}
		\begin{center}
			\setlength{\tabcolsep}{0.0pt}  
			\scalebox{1}{\begin{tabular}{ccc}
					\includegraphics[width=0.33\linewidth]{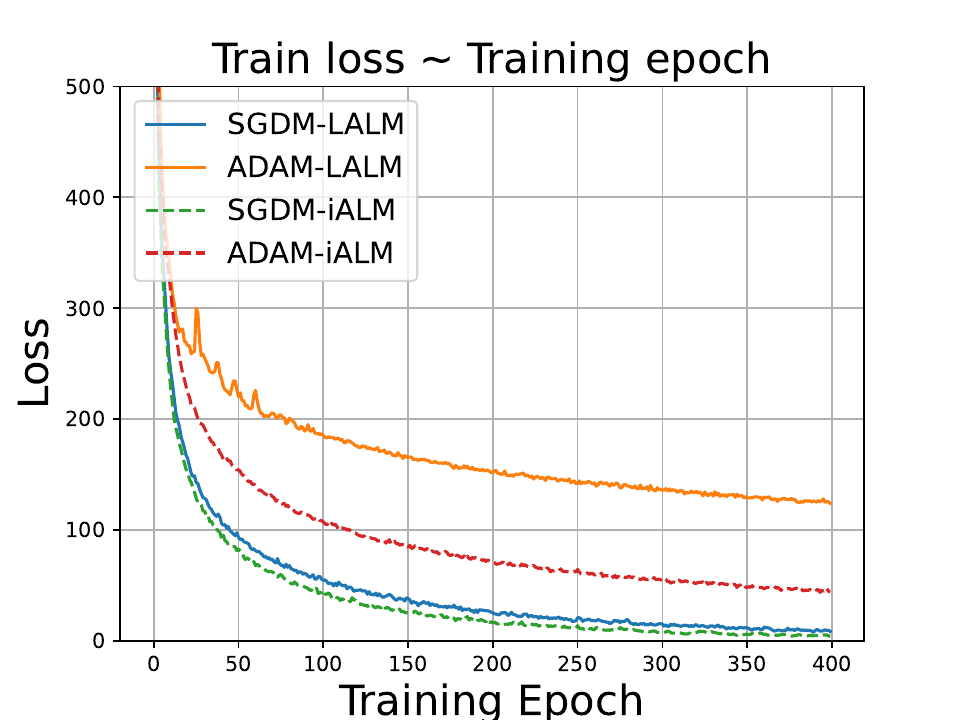}&
					\includegraphics[width=0.33\linewidth]{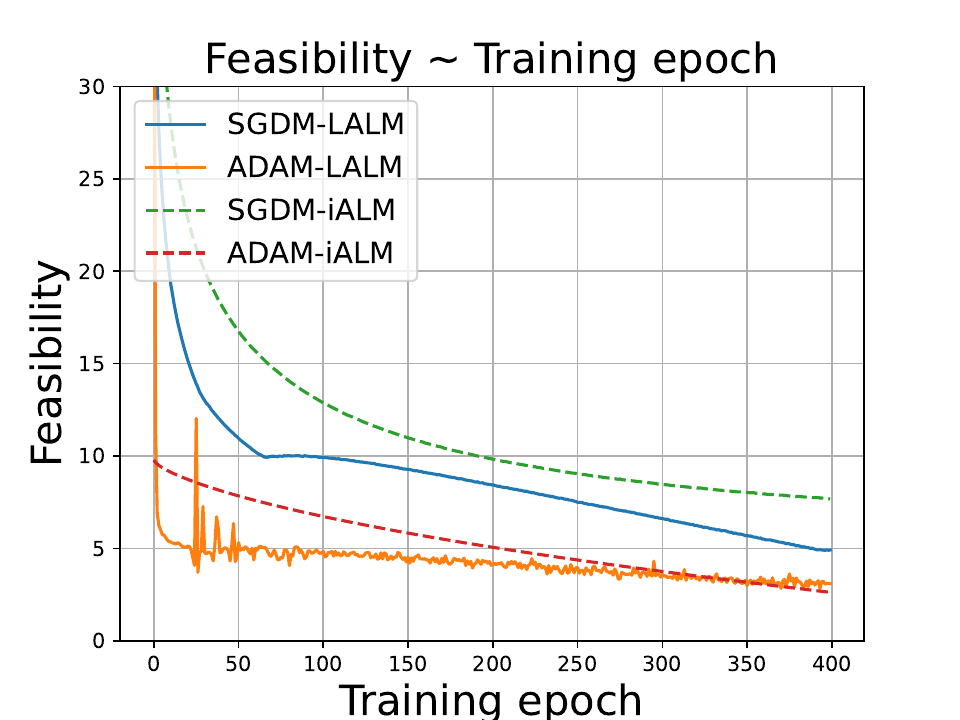}&
					\includegraphics[width=0.33\linewidth]{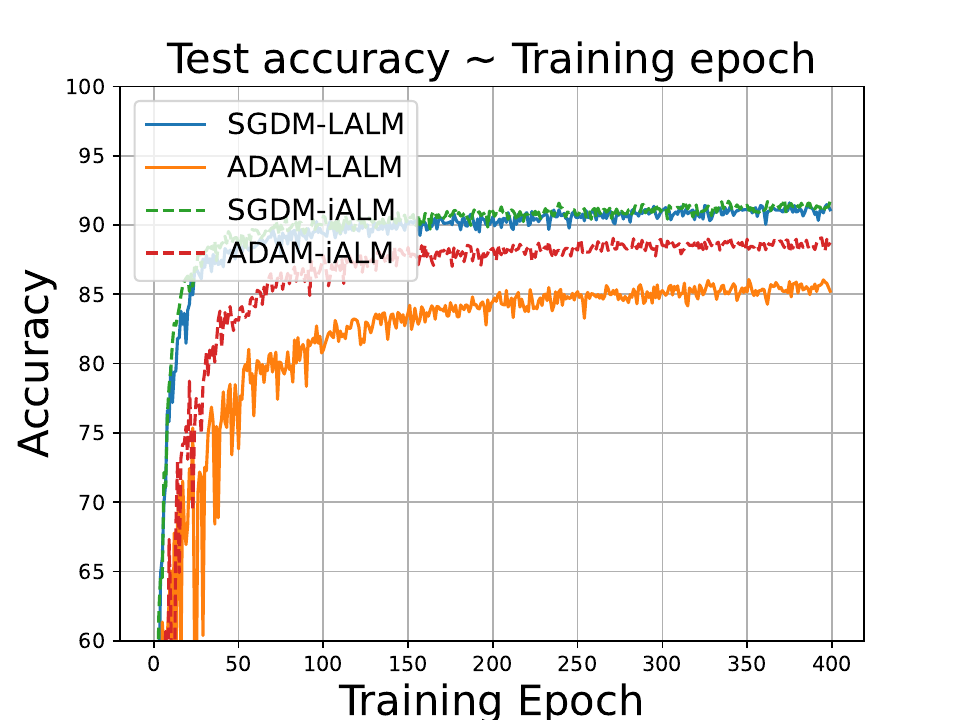}\\
					\multicolumn{1}{c}{\footnotesize{(a) Train loss}} &  \multicolumn{1}{c}{\footnotesize{(b) Feasibility}}&
					\multicolumn{1}{c}{\footnotesize{(c) Test accuracy}}                  
			\end{tabular}}
		\end{center}
	\end{figure*}

	\begin{figure*}[tbp]
		\caption{SGDM-LALM with different stepsizes $\alpha$ to train LeNet10 on MNIST. Other parameters are fixed.} \label{fig:lenet-mnist-sgdm-trade-off}
		\begin{center}
			\setlength{\tabcolsep}{0.0pt}  
			\scalebox{1}{\begin{tabular}{ccc}
					\includegraphics[width=0.33\linewidth]{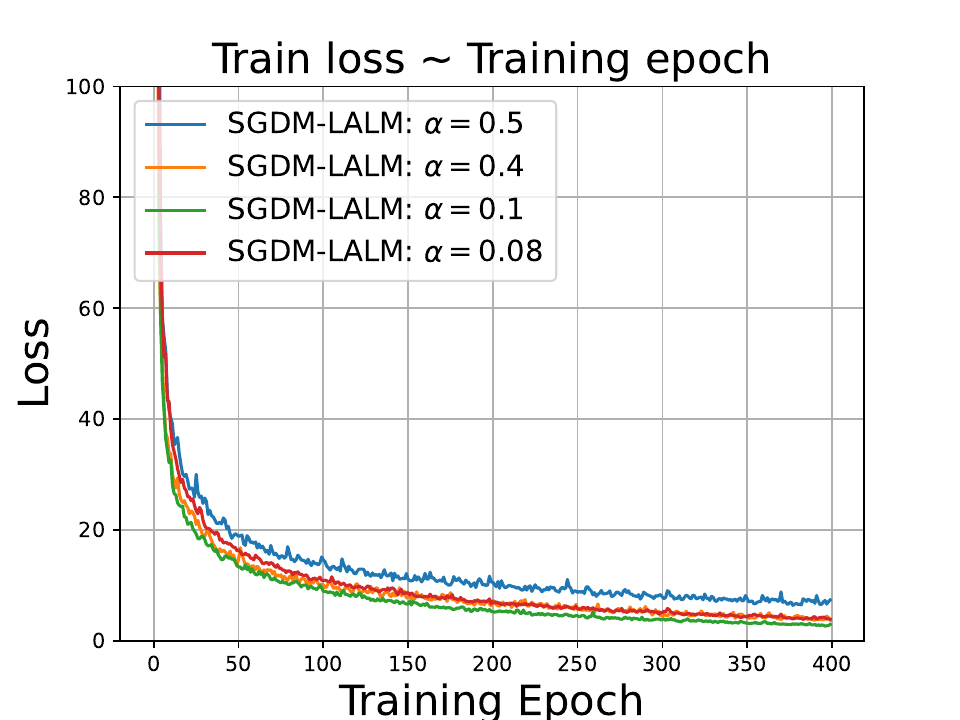}&
					\includegraphics[width=0.33\linewidth]{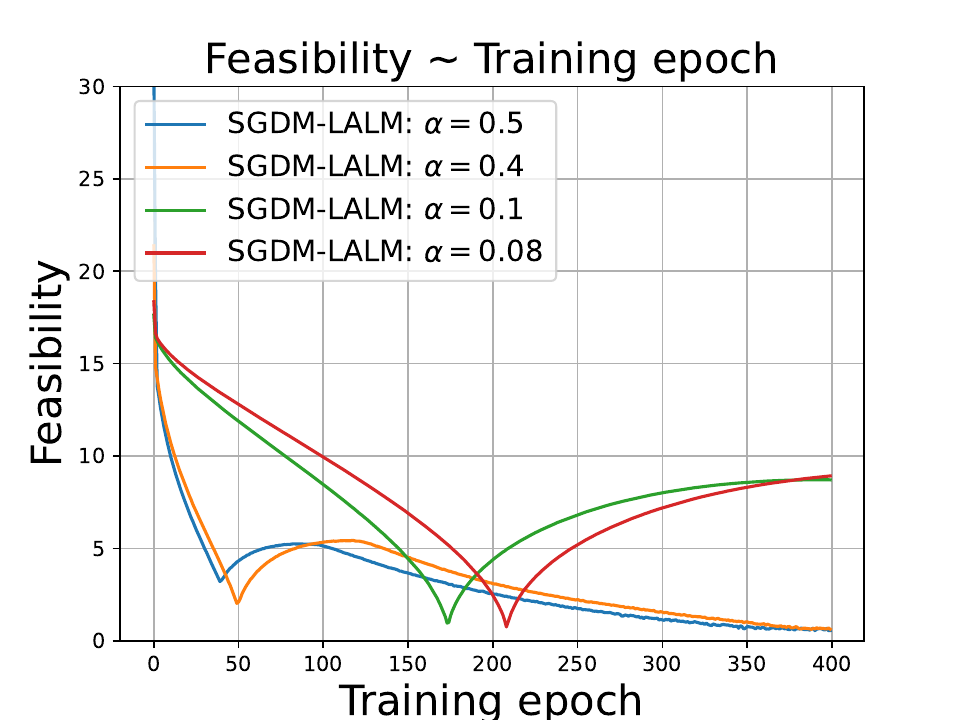}&
					\includegraphics[width=0.33\linewidth]{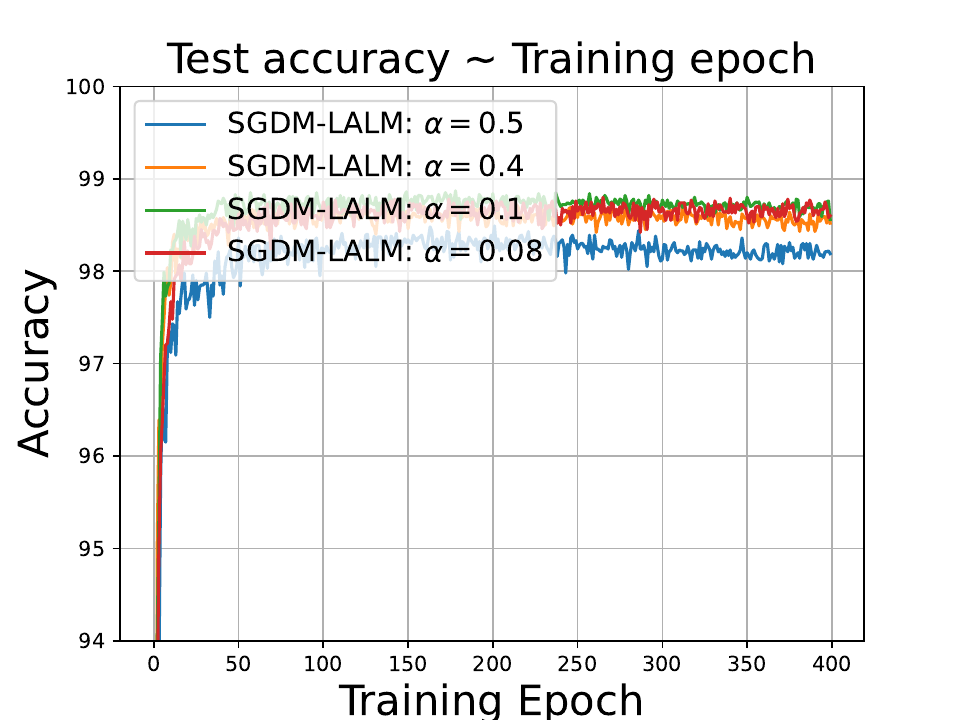}\\
					\multicolumn{1}{c}{\footnotesize{(a) Train loss}} &  \multicolumn{1}{c}{\footnotesize{(b) Feasibility}}&
					\multicolumn{1}{c}{\footnotesize{(c) Test accuracy}}                  
			\end{tabular}}
		\end{center}
	\end{figure*}

	\begin{figure*}[tbh]
		\caption{ADAM-LALM with different $\rho$ to train LeNet10 on MNIST. Other parameters are fixed.} \label{fig:lenet-mnist-adam-trade-off}
		\begin{center}
			\setlength{\tabcolsep}{0.0pt}  
			\scalebox{1}{\begin{tabular}{ccc}
					\includegraphics[width=0.33\linewidth]{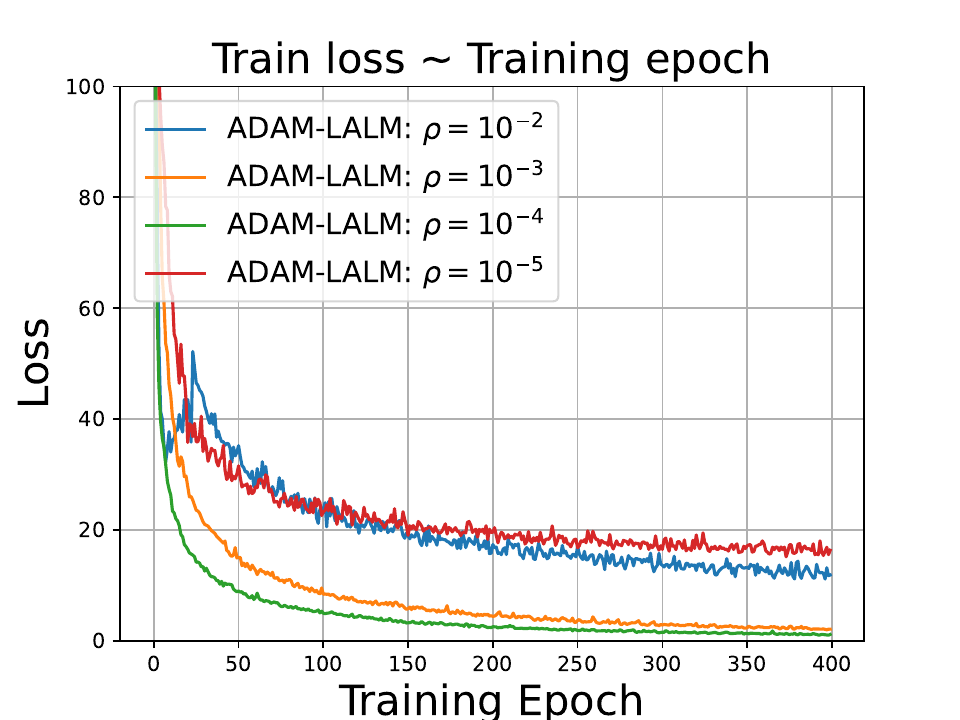}&
					\includegraphics[width=0.33\linewidth]{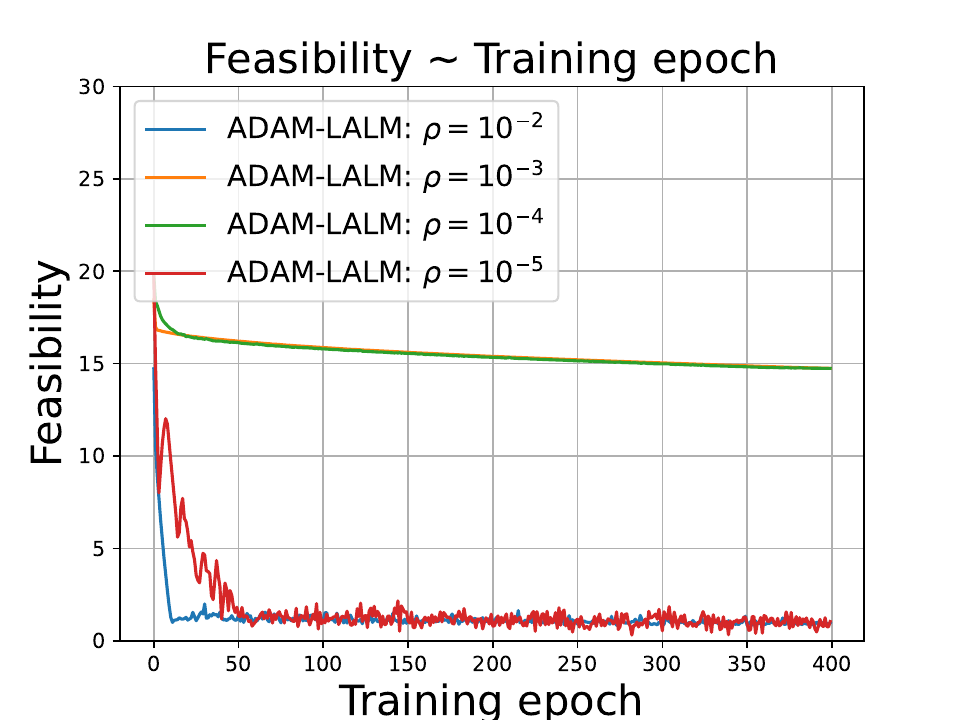}&
					\includegraphics[width=0.33\linewidth]{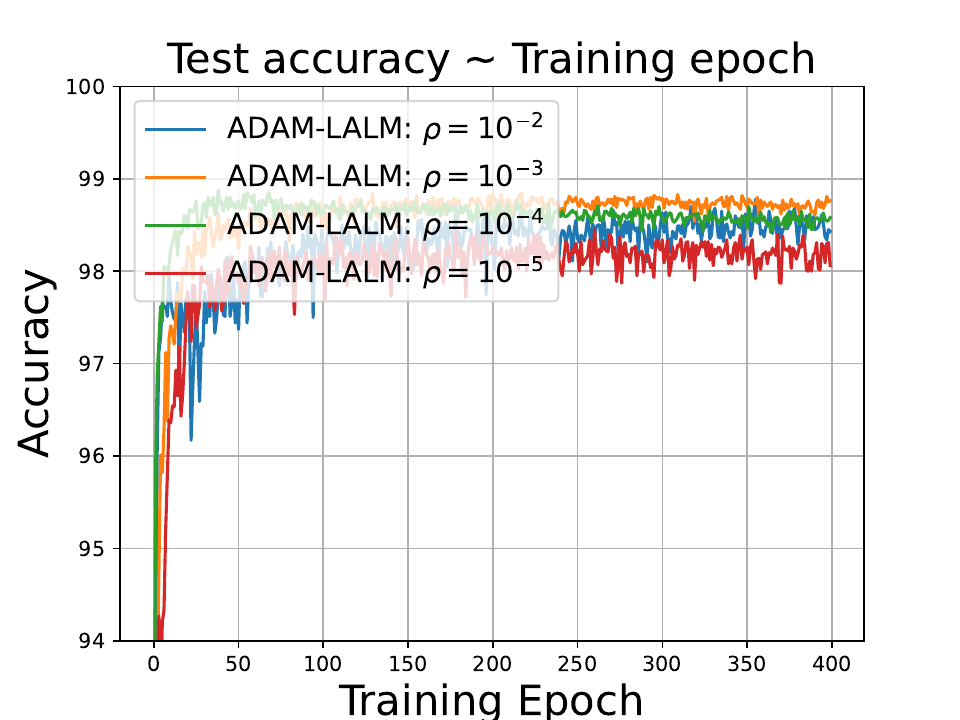}\\
					\multicolumn{1}{c}{\footnotesize{(a) Train loss}} &  \multicolumn{1}{c}{\footnotesize{(b) Feasibility}}&
					\multicolumn{1}{c}{\footnotesize{(c) Test accuracy}}                  
			\end{tabular}}
		\end{center}
	\end{figure*}

	\section{Conclusion}
	
	In this paper, we focus on constrained nonconvex nonsmooth optimization problems in the form of \eqref{Prob_Ori}, where the objective function and constraints are nonconvex and nonsmooth. Motivated by the modified augmented Lagrangian penalty function in \eqref{Eq_Intro_modified_ALM_function},  we introduce a framework \eqref{Eq_Framework} that enables the direct embedding of a wide range of stochastic subgradient methods into Lagrangian-based methods.
	In each iteration of the framework \eqref{Eq_Framework}, we alternatively take a single stochastic subgradient step to update the primal variables, while taking a modified ascent step for the dual variables. We prove the global convergence of the framework \eqref{Eq_Framework} to the KKT points of \eqref{Prob_Ori} in the sense of conservative Jacobians under mild conditions. Moreover, we illustrate that a wide range of efficient subgradient methods, including proximal SGD, proximal SGDM, and proximal ADAM, can be embedded into Lagrangian-based methods through our proposed framework \eqref{Eq_Framework}. Preliminary numerical experiments demonstrate that when embedded with these efficient subgradient methods, Lagrangian-based methods exhibit high efficiency in training neural networks with constraints. These results further demonstrate that by embedding the existing efficient subgradient methods into Lagrangian-based methods, our proposed framework yields efficient optimization approaches for constrained nonconvex nonsmooth optimization problem \eqref{Prob_Ori} with convergence guarantees.

	\appendix
	
	\section{Proofs for the Main Results}
	\label{Appendix_proof_Theo_convergence}
	
	We begin our proof with Lemma \ref{Le_UB_lk}, where we prove that the sequence $\{l_{x,k}\}$ in \eqref{Eq_Framework} is uniformly bounded. 
	\begin{lem}
		\label{Le_UB_lk}
		For any sequence of set-valued mappings $\{\Phi_k\}$ that represents an embeddable stochastic subgradient method, suppose Assumption \ref{Assumption_Phi}, Assumption \ref{Assumption_f} and Assumption \ref{Assumption_framework} hold. Then for the framework \eqref{Eq_Framework}, it holds that 
		\begin{equation}
			\sup_{k\geq 0} \norm{l_{x,k}} < +\infty.
		\end{equation}
	\end{lem}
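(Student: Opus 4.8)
The plan is to bound $\norm{l_{x,k}}$ term by term, via the triangle inequality applied to the first line of \eqref{Eq_Framework}:
\[
\norm{l_{x,k}} \le \norm{d_k} + \norm{J_{c,k}}\big(\norm{\lambda_k} + \rho\norm{\wk}\big) + \norm{\xi_{k+1}}.
\]
It then suffices to show that each of the five quantities $\norm{d_k}$, $\norm{J_{c,k}}$, $\norm{\lambda_k}$, $\norm{\wk}$, $\norm{\xi_{k+1}}$ is uniformly bounded in $k$. Four of these follow almost immediately from the hypotheses; the only substantial point is the uniform boundedness of the dual sequence $\{\lambda_k\}$, which I treat separately.

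First I would dispose of the terms that follow directly from the assumptions. By Assumption \ref{Assumption_framework}(1) the iterates $\{\xk\}$ lie in a bounded set, and since $\delta_k \to 0$ we have $\bar\delta := \sup_k \delta_k < +\infty$; consequently all the balls $\bb{B}_{\bar\delta}(\xk)$ lie in one fixed compact set. As any conservative field or Jacobian is locally bounded (Assumption \ref{Assumption_f}), both $\D_f$ and $\D_c$ are uniformly bounded on this compact set. Combining this with the definition of the $\delta_k$-expansion (Definition \ref{Defin_delta_expansion}) and the inclusions $d_k \in \D_f^{\delta_k}(\xk)$ and $J_{c,k} \in \D_c^{\delta_k}(\xk)$ of Assumption \ref{Assumption_framework}(2) yields uniform bounds on $\norm{d_k}$ and $\norm{J_{c,k}}$. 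For $\wk$, Assumption \ref{Assumption_framework}(2) gives $\norm{\wk - c(\xk)} \le \delta_k \le \bar\delta$, and $c$ is continuous, hence bounded on the compact closure of $\{\xk\}$, so $\{\wk\}$ is bounded. Finally $\{\xi_k\}$ is uniformly bounded by Assumption \ref{Assumption_framework}(3).

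The crux is the boundedness of $\{\lambda_k\}$. I would rewrite the dual update in \eqref{Eq_Framework} as
\[
\lambda_{k+1} = \Big(1 - \tfrac{\theta_k}{\beta}\Big)\lambda_k + \theta_k\,\mathrm{regu}(\wkp).
\]
Since $\norm{\mathrm{regu}(y)} \le 1$ for every $y$, and Assumption \ref{Assumption_framework}(4) guarantees $0 < \theta_{\min} \le \theta_k \le \theta_{\max} < \beta$, so that $1 - \theta_k/\beta \in (0,1)$, taking norms gives the affine contraction
\[
\norm{\lambda_{k+1}} \le \Big(1 - \tfrac{\theta_{\min}}{\beta}\Big)\norm{\lambda_k} + \theta_{\max}.
\]
A routine induction then yields $\norm{\lambda_k} \le \max\big\{\norm{\lambda_0},\ \beta\theta_{\max}/\theta_{\min}\big\}$ for all $k$. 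Substituting the five uniform bounds into the triangle inequality above completes the argument.

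I expect the dual bound to be the only delicate step, and it is exactly where the design of the modified penalty function \eqref{Eq_Intro_modified_ALM_function} pays off: the normalization $\mathrm{regu}$ keeps the ascent direction bounded by $1$, while the damping coefficient $1 - \theta_k/\beta$, kept strictly inside $(0,1)$ by the condition $\theta_{\max} < \beta$ in Assumption \ref{Assumption_framework}(4), turns the dual recursion into a contraction. Were $\theta_{\max} = \beta$ permitted, the damping could vanish and the uniform bound on $\{\lambda_k\}$ could fail; thus the strict inequality $\theta_{\max} < \beta$ is essential to the whole estimate.
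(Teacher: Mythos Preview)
Your proof is correct and follows essentially the same approach as the paper: bound each factor of $l_{x,k}$ using local boundedness of the conservative fields/Jacobians on the bounded iterate set, then show the dual sequence is bounded by exploiting that the update $\lambda_{k+1}=(1-\theta_k/\beta)\lambda_k+\theta_k\,\mathrm{regu}(\wkp)$ is an affine contraction since $\theta_{\max}<\beta$. The only cosmetic difference is that the paper subtracts $\beta$ to write $\norm{\lambda_{k+1}}-\beta\le(1-\theta_k/\beta)(\norm{\lambda_k}-\beta)$ and reads off the tighter bound $\limsup_k\norm{\lambda_k}\le\beta$, whereas you carry the constants $\theta_{\min},\theta_{\max}$ through to obtain $\max\{\norm{\lambda_0},\beta\theta_{\max}/\theta_{\min}\}$; either suffices.
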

	\begin{proof}
		Notice that both $\D_f$ and $\D_c$ are locally bounded. Then based on Assumption \ref{Assumption_framework}(1)-(2), we can conclude from the boundedness of $\{\xk\}$ that   
		\begin{equation}
			\label{Eq_Le_UB_lk_0}
			\sup_{k\geq 0}\norm{d_k} < +\infty, \quad \sup_{k\geq 0} \norm{J_{c,k}} < +\infty, \quad \sup_{k\geq 0} \norm{w_k} < +\infty.
		\end{equation}
		Moreover, it follows from the update scheme of $\{\lambda_k\}$ that $\norm{\lambda_{k+1}} \leq \left(1-\frac{\theta_k}{\beta}\right)\norm{\lambda_k} + \theta_k$, hence  
        \begin{equation}
            \norm{\lambda_{k+1}} - \beta \leq \left(1-\frac{\theta_k}{\beta}\right) (\norm{\lambda_{k}} - \beta).
        \end{equation}
        Then Assumption \ref{Assumption_framework}(4) illustrates that $\limsup_{k\geq 0} \norm{\lambda_k} \leq \beta$. 
        Together with \eqref{Eq_Le_UB_lk_0}, we can conclude that   $\sup_{k\geq 0} \norm{l_{x,k}} < +\infty$. This completes the proof. 
	\end{proof}

	Then we have the following lemma showing that $\{\lambda_k\}$ asymptotically approximates $\{\Tregu(\xk)\}$. 
	\begin{lem}
		\label{Le_close_lambda_Tregu}
		For any sequence of set-valued mappings $\{\Phi_k\}$ that represents an embeddable stochastic subgradient method, suppose Assumption \ref{Assumption_Phi}, Assumption \ref{Assumption_f} and Assumption \ref{Assumption_framework} hold. Then for any sequence $\{k_j\}$ such that $k_j \to +\infty$ and $ \{x_{k_j}\}$ converges to  $\bar{x} \in \X$,  it holds that 
		\begin{equation}
			\lim_{j\to \infty} \mathrm{dist}\left( \lambda_{k_j}, \beta \Tregu(c(\bar{x}))   \right) = 0. 
		\end{equation}
	\end{lem}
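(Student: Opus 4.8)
The plan is to split into two cases according to whether $c(\bar{x})$ vanishes, and in both cases to exploit the fact that the dual recursion is an exponential moving average. Writing $\mu_k := \lambda_k/\beta$, the update $\lambda_{k+1}=\lambda_k+\theta_k\left(\mathrm{regu}(\wkp)-\frac{1}{\beta}\lambda_k\right)$ becomes $\mu_{k+1}=(1-a_k)\mu_k+a_k\,\mathrm{regu}(\wkp)$ with $a_k:=\theta_k/\beta$, so that $1-a_k\le q:=1-\theta_{\min}/\beta<1$ by Assumption \ref{Assumption_framework}(4). From the proof of Lemma \ref{Le_UB_lk} I have $\sup_k\norm{\lambda_k}<\infty$ and $\limsup_k\norm{\lambda_k}\le\beta$; set $B:=1+\sup_k\norm{\mu_k}<\infty$.

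When $c(\bar{x})=0$ the target set is $\beta\Tregu(c(\bar{x}))=\{z\in\Rp:\norm{z}\le\beta\}$, and since $\mathrm{dist}(\lambda,\{z:\norm{z}\le\beta\})=\max\{0,\norm{\lambda}-\beta\}$, the bound $\limsup_k\norm{\lambda_k}\le\beta$ already gives $\mathrm{dist}(\lambda_k,\beta\Tregu(c(\bar{x})))\to0$ for the whole sequence, so this case needs nothing further. The substantial case is $c(\bar{x})\neq0$, where $\beta\Tregu(c(\bar{x}))$ is the single point $\beta\hat{r}$ with $\hat{r}:=c(\bar{x})/\norm{c(\bar{x})}$, and I must show $\mu_{k_j}\to\hat{r}$. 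The first step is a slow-drift observation: by Assumption \ref{Assumption_Phi}, the boundedness of $\{(\xk,\yk)\}$, and the boundedness of $\{l_{x,k}\}$ from Lemma \ref{Le_UB_lk}, there is a constant $M$ with $\norm{\xkp-\xk}\le\eta_k T_{\Phi}(l_{x,k},\xk,\yk)\le M\eta_k$, and $\eta_k\to0$. Hence for each fixed window length $m$ and each $0\le i\le m$, $\norm{x_{k_j}-x_{k_j-i}}\le M\sum_{l=1}^{i}\eta_{k_j-l}\to0$ as $j\to\infty$, so $x_{k_j-i}\to\bar{x}$. By continuity of $c$ and $\norm{w_k-c(\xk)}\le\delta_k\to0$ (Assumption \ref{Assumption_framework}(2)), the values $w_{k_j-i}\to c(\bar{x})\neq0$, and since $\mathrm{regu}$ is continuous away from the origin, $\mathrm{regu}(w_{k_j-i})\to\hat{r}$ for each $0\le i\le m-1$.

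The second step unrolls the moving average $m$ steps:
\begin{equation*}
\mu_{k_j}-\hat{r}=\Big(\prod_{i=1}^{m}(1-a_{k_j-i})\Big)(\mu_{k_j-m}-\hat{r})+\sum_{i=0}^{m-1}\pi_i^{(j)}\,\big(\mathrm{regu}(w_{k_j-i})-\hat{r}\big),
\end{equation*}
where the weights $\pi_i^{(j)}\ge0$ sum to at most one. Bounding the leading coefficient by $q^m$ and using $\norm{\mu_{k_j-m}-\hat{r}}\le B$, I obtain $\norm{\mu_{k_j}-\hat{r}}\le q^m B+\max_{0\le i\le m-1}\norm{\mathrm{regu}(w_{k_j-i})-\hat{r}}$. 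Letting $j\to\infty$ with $m$ fixed kills the maximum (by the first step), giving $\limsup_j\norm{\mu_{k_j}-\hat{r}}\le q^m B$, and then letting $m\to\infty$ yields $\mu_{k_j}\to\hat{r}$, hence $\mathrm{dist}(\lambda_{k_j},\beta\Tregu(c(\bar{x})))\to0$.

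The main obstacle is the discontinuity of $\mathrm{regu}$ at the origin: globally $\mathrm{regu}(w_k)$ need not vary slowly, so the moving average cannot be shown to converge outright. The device circumventing this is the finite-window, slow-drift argument, which confines all indices $k_j-i$ with $0\le i\le m$ appearing in the $m$-step unrolling to a neighborhood of $\bar{x}$ on which $c$ is bounded away from $0$ and $\mathrm{regu}$ is continuous; the geometric factor $q^m$ then controls the residual memory $\mu_{k_j-m}$ of the average, and the order of the limits (first $j\to\infty$, then $m\to\infty$) is what makes the estimate collapse to zero.
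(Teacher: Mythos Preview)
Your proof is correct and arguably more elementary than the paper's. Two genuine differences are worth noting.

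First, you split into the cases $c(\bar{x})=0$ and $c(\bar{x})\neq 0$. In the former, the target set $\beta\Tregu(c(\bar{x}))$ is the full ball of radius $\beta$, and the bound $\limsup_k\norm{\lambda_k}\le\beta$ from Lemma \ref{Le_UB_lk} dispatches it immediately. The paper instead handles both cases at once: it uses the graph-closedness of $\Tregu$ together with Carath\'eodory's theorem and Jensen's inequality (exploiting that $\Tregu(c(\bar{x}))$ is convex) to pass to the limit uniformly. Your case split avoids Carath\'eodory entirely, at the price of a slightly ad hoc argument; the paper's route would generalize more readily if $\Tregu$ were replaced by a different graph-closed convex-valued map.

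Second, you unroll the moving average over a \emph{fixed} window $m$, bound the residual memory by $q^m B$, take $j\to\infty$ (killing the windowed terms via slow drift and continuity of $\mathrm{regu}$ away from $0$), and only then send $m\to\infty$. The paper instead chooses a \emph{variable} window $T_k\to\infty$ with $\sum_{i=k-T_k}^{k}\eta_i\to 0$, so that both the geometric tail and the drift over the window vanish simultaneously as $k\to\infty$. Your double-limit device is cleaner and requires no construction of $\{T_k\}$; the paper's single-limit version packages the error into one diminishing sequence $\delta_k^*$, which is convenient for quoting later but adds bookkeeping here.
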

	\begin{proof}
		The update scheme for $\{\lambda_k\}$ can be rewritten as 
		\begin{equation}
			\lambda_{k+1} \in  \left(1-\frac{\theta_k}{\beta}\right)\lambda_k + \theta_k \Tregu(\wkp). 
		\end{equation}
		Notice that $\{\eta_k\}$ is a diminishing sequence, then for any $k > 0$, there exists a sequence of positive  integers $\{T_k\}$ such that 
		\begin{equation}
			\lim_{k\to \infty}\sum_{i = k - T_k}^k \eta_i = 0, \quad \lim_{k \to \infty} T_k = +\infty. 
		\end{equation}
		Moreover, from \eqref{Eq_condition_Phi}, Lemma \ref{Le_UB_lk}, and the uniform boundedness of $\{\xk\}$ and $\{\yk\}$, there exists $M_c, M_T > 0$ such that  $\norm{c(\xkp) - c(\xk)} \leq \eta_k M_c$ and  $\norm{\Phi_k(l_{x,k}, \xk, \yk, \eta_k) - (\xk, \yk)} \leq \eta_l M_T $ hold for all $k\geq 0$.  
		
		Let $\delta_k^* = \sup_{k-T_k\leq i\leq k}\delta_i + 2\beta (1-\frac{\theta_{\min}}{\beta})^{T_k} + 2\sum_{i = k - T_k}^k \eta_i (1+M_T + M_c)$. Then it is easy to verify that $\lim_{k\to +\infty}\delta_k^* = 0$. Moreover, we have 
		\begin{equation}
			\lambda_k \in \left(\lambda_{k - T_k}\prod_{i = k-T_k }^{k-1} 
			\left( 1 - \frac{\theta_i}{\beta} \right)\right) + \sum_{j = k-T_k + 1}^k \left(\theta_{j-1}\prod_{i = j }^{k-1} \left( 1 - \frac{\theta_i}{\beta} \right)\right) \Tregu(w_j), 
		\end{equation}
		where we define $\Pi_{i=k}^{k-1}\left(1-\frac{\theta_i}{\beta}\right):=1$. As a result, we have
		\begin{equation}
			\mathrm{dist}\left(\lambda_k,  \beta \cdot \mathrm{conv}\left(\bigcup_{\norm{y - c(q_k)} \leq \delta_k^{*}, ~\norm{q_k - \xk}\leq \delta_k^*} \Tregu(y) \right)  \right) \leq 2\beta \left(1-\frac{\theta_{\min}}{\beta}\right)^{T_k} \leq \delta_k^*. 
        \label{eq:A8}
		\end{equation}
		As a result, given any sequence $\{k_j\}$ such that $k_j \to +\infty$ and $\lim_{j\to \infty} x_{k_j} = \bar{x}$ with $\bar{x} \in \X$, for any $j\geq 0$, the Caratheodory's theorem illustrates that there exist $\{y_{k_j, 1},..., y_{k_j, p+1}\}$,  $\{z_{k_j, 1},..., z_{k_j, p+1}\}$ and  $\{\alpha_{k_j, 1},..., \alpha_{k_j, p+1}\}$ such that 
		\begin{equation}
			\begin{aligned}
			    &\mathrm{dist}\left( y_{k_j, i} , \bigcup_{\norm{\tilde{x}-x_{k_j}} \leq \delta^*_{k_j}} c(\tilde{x}) \right)\leq {\delta^*_{k_j}}, \quad &&z_{k_j, i} \in \mathcal{T}_\mathrm{regu}(c(y_{k_j, i})),\\
                &\norm{\lambda_{k_j} - \beta \sum_{i = 1}^{p+1} \alpha_{k_j, i}z_{k_j, i}} \leq \beta \delta^{*}_k, \quad && \sum_{i = 1}^{p+1} \alpha_{k_j,i} = 1,
			\end{aligned}
		\end{equation}
		holds for any $1\leq i\leq p+1$.  
		Therefore, we can conclude that,
		\begin{equation}
			\lim_{j\to \infty}\mathrm{dist} \left((y_{k_j, i}, z_{k_j, i}), (c(x_{k_j}), \Tregu(c(x_{k_j}))) \right) = 0. 
		\end{equation}
		Then from the fact that $\Tregu$ is graph-closed, we can conclude that 
		\begin{equation}
			\lim_{j\to \infty}\mathrm{dist}\left(z_{k_j, i}, \Tregu(c(\bar{x}))  \right) = 0, 
		\end{equation}
		holds for any $1\leq i\leq p+1$. Together with Jensen's inequality, it holds that 
		\begin{equation}
			\begin{aligned}
				&\lim_{j\to \infty}\mathrm{dist}\left(\lambda_{k_j}, \beta \Tregu(c(\bar{x}))  \right) = \lim_{j\to \infty}\mathrm{dist}\left( \sum_{i = 1}^{p+1} \alpha_{k_j, i}z_{k_j, i}, \Tregu(\bar{x})  \right)\\
				\leq{}& \lim_{j\to \infty}\sum_{i = 1}^{p+1} \alpha_{k_j, i}\mathrm{dist}\left( z_{k_j, i}, \Tregu(\bar{x})  \right) = 0. 
			\end{aligned}
		\end{equation}
		This completes the proof. 
	\end{proof}

	Based on Lemma \ref{Le_UB_lk} and Lemma \ref{Le_close_lambda_Tregu}, in the following proposition, we prove that  $\{d_k + J_{c,k}(\lambda_k + \rho \wk ) \}$ can be regarded as a noiseless approximation for  $\D_g$ in \eqref{Eq_Framework}.  
	\begin{prop}
		\label{Prop_lk_Dg}
		For any sequence of set-valued mappings $\{\Phi_k\}$ that represents an embeddable stochastic subgradient method, suppose Assumption \ref{Assumption_Phi}, Assumption \ref{Assumption_f} and Assumption \ref{Assumption_framework} hold. Then for any $\{k_j: j\geq 0\}$ such that $\lim_{j \to \infty} x_{k_j} = \bar{x}$, it holds that 
		\begin{equation}
			\lim_{j\to \infty}\mathrm{dist}\left( d_{k_j}+ J_{c,k_j} (\lambda_{k_j} +  \rho w_{k_j}), \D_g(\bar{x}) \right) = 0. 
		\end{equation}
	\end{prop}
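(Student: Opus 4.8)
The plan is to exhibit, for each $j$, a reference point lying in $\D_g(\bar{x})$ that is provably close to $d_{k_j} + J_{c,k_j}(\lambda_{k_j} + \rho w_{k_j})$, and then send $j \to \infty$. Since $\D_g(\bar{x}) = \conv\left(\D_f(\bar{x}) + \D_c(\bar{x})(\beta\,\Tregu(c(\bar{x})) + \rho c(\bar{x}))\right)$ contains the unconvexified set $\D_f(\bar{x}) + \D_c(\bar{x})(\beta\,\Tregu(c(\bar{x})) + \rho c(\bar{x}))$, it suffices to produce selections $\tilde{d}_{k_j} \in \D_f(\bar{x})$, $\tilde{J}_{k_j} \in \D_c(\bar{x})$, and $\tilde{\lambda}_{k_j} \in \beta\,\Tregu(c(\bar{x}))$ for which the reference point $\tilde{d}_{k_j} + \tilde{J}_{k_j}(\tilde{\lambda}_{k_j} + \rho c(\bar{x}))$ asymptotically matches $d_{k_j} + J_{c,k_j}(\lambda_{k_j} + \rho w_{k_j})$.

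First I would assemble the three required approximations. (i) \emph{Outer semicontinuity of $\delta$-expansions}: because $\D_f$ and $\D_c$ are locally bounded and graph-closed (Assumption \ref{Assumption_f}), the inclusions $d_{k_j} \in \D_f^{\delta_{k_j}}(x_{k_j})$ and $J_{c,k_j} \in \D_c^{\delta_{k_j}}(x_{k_j})$ from Assumption \ref{Assumption_framework}(2), together with $x_{k_j} \to \bar{x}$ and $\delta_{k_j} \to 0$, force $\mathrm{dist}(d_{k_j}, \D_f(\bar{x})) \to 0$ and $\mathrm{dist}(J_{c,k_j}, \D_c(\bar{x})) \to 0$. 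I would establish this by a routine subsequence-contradiction argument: extract nearby selections in $\D_f(z_{k_j})$ with $z_{k_j} \to \bar{x}$, use local boundedness to pass to a convergent subsequence, and invoke graph-closedness to place the limit in $\D_f(\bar{x})$. Choosing nearest points then yields $\tilde{d}_{k_j}$ and $\tilde{J}_{k_j}$. (ii) Lemma \ref{Le_close_lambda_Tregu} directly supplies $\tilde{\lambda}_{k_j} \in \beta\,\Tregu(c(\bar{x}))$ with $\norm{\lambda_{k_j} - \tilde{\lambda}_{k_j}} \to 0$. (iii) Assumption \ref{Assumption_framework}(2) gives $\norm{w_{k_j} - c(x_{k_j})} \leq \delta_{k_j}$, and continuity of $c$ gives $c(x_{k_j}) \to c(\bar{x})$, so $w_{k_j} \to c(\bar{x})$.

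The final step is the distance estimate. Since $\tilde{d}_{k_j} + \tilde{J}_{k_j}(\tilde{\lambda}_{k_j} + \rho c(\bar{x})) \in \D_g(\bar{x})$, a triangle inequality followed by adding and subtracting $\tilde{J}_{k_j}(\lambda_{k_j} + \rho w_{k_j})$ inside the product term gives
\begin{equation*}
\mathrm{dist}\!\left( d_{k_j} + J_{c,k_j}(\lambda_{k_j} + \rho w_{k_j}),\, \D_g(\bar{x}) \right) \leq \norm{d_{k_j} - \tilde{d}_{k_j}} + \norm{(J_{c,k_j} - \tilde{J}_{k_j})(\lambda_{k_j} + \rho w_{k_j})} + \norm{\tilde{J}_{k_j}}\,\norm{(\lambda_{k_j} - \tilde{\lambda}_{k_j}) + \rho(w_{k_j} - c(\bar{x}))}.
\end{equation*}

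The step requiring the most care is ensuring that the bounded factors in this product splitting stay bounded while the error factors vanish. Here $\norm{\lambda_{k_j} + \rho w_{k_j}}$ is uniformly bounded by Lemma \ref{Le_UB_lk} and the bounds established in its proof (namely $\sup_k \norm{w_k} < +\infty$ and $\limsup_k \norm{\lambda_k} \leq \beta$), while $\norm{\tilde{J}_{k_j}}$ is bounded because $\D_c(\bar{x})$ is bounded by the local boundedness of $\D_c$. Combining these boundedness facts with $\norm{d_{k_j} - \tilde{d}_{k_j}} \to 0$, $\norm{J_{c,k_j} - \tilde{J}_{k_j}} \to 0$, $\norm{\lambda_{k_j} - \tilde{\lambda}_{k_j}} \to 0$, and $\norm{w_{k_j} - c(\bar{x})} \to 0$ shows that each of the three terms tends to $0$, which yields the claim.
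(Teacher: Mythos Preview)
Your proposal is correct and follows essentially the same approach as the paper: both use graph-closedness of $\D_f$ and $\D_c$ together with $\delta_k \to 0$ to approximate $d_{k_j}$ and $J_{c,k_j}$, invoke Lemma~\ref{Le_close_lambda_Tregu} for $\lambda_{k_j}$, use $w_{k_j} \to c(\bar{x})$, and finish with a triangle inequality on the product structure. Your write-up is in fact somewhat more explicit than the paper's about constructing the reference point in $\D_g(\bar{x})$ and about checking that the bounded factors in the product splitting are indeed bounded.
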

	\begin{proof}
		For any sequence $\{k_j\}$ such that $k_j \to \infty$ and $\lim_{j\to \infty} x_{k_j} = \bar{x}$ with $\bar{x} \in \X$, we first notice that  $d_k \in \D_f^{\delta_k}(\xk)$ and $J_{c, k} \in \D_c^{\delta_k}(\xk)$. Then from the graph-closeness of $\D_f$ and $D_c$, we can conclude that, 
		\begin{equation}
			\lim_{j \to \infty} \mathrm{dist}\left(d_{k_j}, \D_f(\bar{x})  \right) = 0, \quad \lim_{j\to \infty} \mathrm{dist}\left( J_{c, k_j},  \D_{c}(\bar{x})  \right) = 0.
		\end{equation}
		Then Lemma \ref{Le_close_lambda_Tregu} shows that  $\lim_{j\to \infty} \mathrm{dist}\left( \lambda_{k_j}, \beta  \Tregu(c(\bar{x}))   \right) = 0$ holds. 
		Therefore, from the local boundedness of $\D_c$ and $\Tregu$, we can conclude that, 
		\begin{equation}
			\lim_{j\to \infty} \mathrm{dist}\left( J_{c, k_j}\lambda_{k_j}, \beta \cdot \D_{c}(\bar{x})  \Tregu(c(\bar{x}))   \right) = 0. 
		\end{equation}
		In addition, Lemma \ref{Le_UB_lk} illustrates that $\lim_{k\to \infty}\norm{\xkp - \xk} = 0$. Together with the continuity of $c$, it holds that 
		\begin{equation}
			\lim_{j\to \infty} \mathrm{dist}\left( J_{c, k_j}c(x_{k_j+1}), \D_{c}(\bar{x}) c(\bar{x})   \right) = 0. 
		\end{equation}
		Combine all these inequalities together, we conclude from the triangle inequality that
		\begin{equation}
			\begin{aligned}
				&\lim_{j\to \infty}\mathrm{dist}\left( d_{k_j}+ J_{c,k_j} (\lambda_{k_j} +  \rho c(x_{k_j})), \D_g(\bar{x}) \right) \\
				\leq{}& \lim_{j \to \infty} \mathrm{dist}\left(d_{k_j}, \D_f(\bar{x})  \right) + \lim_{j\to +\infty} \mathrm{dist}\left( J_{c, k_j}\lambda_{k_j}, \beta \cdot \D_{c}(\bar{x})  \Tregu(c(\bar{x}))   \right) \\
				& + \rho \cdot \lim_{j\to \infty} \mathrm{dist}\left( J_{c, k_j}c(x_{k_j+1}), \D_{c}(\bar{x}) c(\bar{x})   \right) \\
				={}& 0. 
			\end{aligned}
		\end{equation}
		This completes the proof. 
	\end{proof}

	Notice that $l_{x,k} = d_k + J_{c,k} (\lambda_k + \rho \wk) + \xi_{k+1}$, the sequence $\{l_{x,k}\}$ can be regarded as approximate noisy evaluations of $\D_g$. Therefore, based on Definition \ref{Defin_stable_stochastic_method}, we present the proof for Theorem \ref{Theo_convergence_Dg}.

	\begin{proof}[Proof for Theorem \ref{Theo_convergence_Dg}]
		Following Proposition \ref{Prop_lk_Dg} and the graph-closedness of $\D_g$, we can conclude that there exists a positive sequence $\{\tilde{\delta}_k\}$ such that $\lim_{k\to +\infty} \tilde{\delta}_k = 0$, and 
		\begin{equation}
			d_{k}+ J_{c,k} (\lambda_{k} +  \rho w_{k}) \in \D_g^{\tilde{\delta}_k}(\xk)
		\end{equation}
		holds for all $k\geq 0$.  Moreover, the path-differentiability of $g$ follows from the path-differentiability of $f$ and $c$. In addition, the uniform boundedness of $\{(\xk, \yk)\}$ is guaranteed by Assumption \ref{Assumption_framework}(1). Furthermore, Assumption \ref{Assumption_framework}(3) illustrates that 
            \begin{equation}
			\lim\limits_{s \to \infty} \sup\limits_{s\leq i \leq \Lambda(\lambda_s + T)}\norm{ \sum_{k = s}^{i} \eta_k \xi_{k+1}} =0.
		\end{equation}
            holds for any $T>0$.
            
		Therefore, based on the definition of the embeddable stochastic subgradient methods in Definition \ref{Defin_stable_stochastic_method}, we can conclude that any cluster point of the sequence $\{\xk\}$ lies in $\{x \in \X: 0\in \D_g(x) + \NX(x)\}$, and the sequence of function values $\{g(\xk)\}$ converges. This completes the proof.

	\end{proof}

	\bibliographystyle{plain}
	\bibliography{ref}

\end{document}